\newtheorem{assumption}{Assumption}
\newcommand{\triplenorm}[1]{%
	\left\vert\kern-0.9pt\left\vert\kern-0.9pt\left\vert #1
	\right\vert\kern-0.9pt\right\vert\kern-0.9pt\right\vert}
\numberwithin{equation}{section}
\numberwithin{theorem}{section} 
\numberwithin{lemma}{section}
\numberwithin{proposition}{section}
\numberwithin{remark}{section}
\numberwithin{assumption}{section}
\begin{document}
	
	\title{A Hybridizable  Discontinuous Galerkin Method for Magnetic Advection-Diffusion Problems\thanks{
	The work of Shuonan Wu is supported in part by the Beijing Natural Science Foundation No. 1232007 and the National Natural Science Foundation of China grant No. 12222101.}}
	
	\titlerunning{HDG for $H({\rm curl})$ Advection-Diffusion Problems}        
	
	\author{Jindong Wang \and
		Shuonan Wu 
	}
	
	
	\institute{Jindong Wang \at
		School of Mathematical Sciences, Peking University, Beijing 100871, China \\
		\email{jdwang@pku.edu.cn}           
		\and
		Shuonan Wu \at
		School of Mathematical Sciences, Peking University, Beijing 100871, China \\
		\email{snwu@math.pku.edu.cn}
	}
	
	\date{Received: date / Accepted: date}

	\maketitle
	
	\begin{abstract}
	We propose and analyze a hybridizable discontinuous Galerkin (HDG) method for solving a mixed magnetic advection-diffusion problem within a more general Friedrichs system framework. With carefully constructed numerical traces, we introduce two distinct stabilization parameters: $\tau_t$ for the tangential trace and $\tau_n$ for the normal trace. These parameters are tailored to satisfy different requirements, ensuring the stability and convergence of the method. Furthermore, we incorporate a weight function to facilitate the establishment of stability conditions. We also investigate an elementwise postprocessing technique that proves to be effective for both two-dimensional and three-dimensional problems in terms of broken $\bm{H}({\rm curl})$ semi-norm accuracy improvement. Extensive numerical examples are presented to showcase the performance and effectiveness of the HDG method and the postprocessing techniques.
	\subclass{65N30 \and  65N12}
	\end{abstract}
	
\section{Introduction}\label{sec:introduction}
In this paper, we consider the following magnetic advection-diffusion equation:
\begin{equation}\label{eq:original}
	\begin{aligned}
		\nabla\times(\varepsilon\nabla\times\bm{u})-\bm{\beta}\times(\nabla\times\bm{u})+\nabla(\bm{\beta}\cdot\bm{u})+\gamma \bm{u}&=\bm{f} \quad & \text{ in }\Omega,\\
		\bm{n}\times\bm{u}+ \chi_{\Gamma^-} (\bm{u} \cdot \bm{n}) \bm{n}&=\bm{g} \quad & \text{ on }\Gamma,
	\end{aligned}
\end{equation}
where $\Omega\in\mathbb{R}^3$ is a bounded polyhedral domain with boundary $\Gamma=\partial\Omega$. The inflow and outflow parts of $\Gamma$ are defined as follow:
$$
\Gamma^-:=\{\bm{x}\in\Gamma:\bm{\beta}(\bm{x})\cdot\bm{n}(\bm{x})<0\},\quad \Gamma^+:=\{\bm{x}\in\Gamma:\bm{\beta}(\bm{x})\cdot\bm{n}(\bm{x})\ge0\}.
$$ In this context, $\varepsilon$ is a positive constant, $\bm{f}\in
\bm{L}^2(\Omega)$, $\bm{g}\in \bm{H}^{3/2}(\Gamma)$ and $\bm{n}$ denotes the unit outward normal to $\Gamma$.

The magnetic advection-diffusion problem (\ref{eq:original}) is of great importance in many areas of modern sciences and engineering, especially the magnetohydrodynamics (MHD) \cite{gerbeau2006mathematical}, which involves understanding the behavior of magnetic fields in the presence of fluid flow (advection) and magnetic diffusion. 

It is well-known that the scalar advection-diffusion problem exhibits a boundary layer phenomenon in the case of advection dominance, specifically, $\varepsilon\ll |\bm{\beta}|$. This behavior is also evident in the model problem (\ref{eq:original}) and introduces numerical challenges in obtaining a stable and accurate solution. A considerable body of literature has addressed these challenges, proposing various methods for the scalar problem. One class of methods involves the concept of exponential fitting, which utilizes exponential spline functions to capture the presence of the boundary layer. This approach encompasses the construction and analysis of spline basis on both rectangular grids \cite{o1991globally,roos1996novel,dorfler1999uniform,dorfler1999uniform2} and unstructured grids \cite{sacco1998finite,sacco1999nonconforming,wang1997novel}. Additionally, exponential functions can be incorporated in the assembly of the stiffness matrix based on operator fitting ideas \cite{xu1999monotone}. Another class of methods follows the principle of upwind or streamline stabilization. Well-known residual-based stabilization techniques include streamline upwind Petrov Galerkin (SUPG) method \cite{hughes1979multidimentional,brooks1982streamline,niijima1989pointwise,johnson1987crosswind}, Galerkin least squares finite element method \cite{hughes1989new,franca1992stabilized} and bubble function stabilization \cite{baiocchi1993virtual,brezzi1994choosing,brezzi1998applications,brezzi1998further,franca2002stability}. Additionally, symmetric stabilization techniques have been developed, such as Local Projection Stabilization (LPS) \cite{becker2001finite,braack2006local,matthies2007unified,ganesan2010stabilization} and continuous interior penalty (CIP) \cite{burman2004edge,burman2005unified,burman2006continuous}.

Considerable attention has been given to the discontinuous Galerkin (DG) methods for scalar advection-diffusion problems over the past decades. These methods have been extensively studied and applied in the literature, as evidenced by works such as  \cite{ayuso2009discontinuous,cockburn1999discontinuous,cockburn1998local,houston2002discontinuous,zarin2005interior}. It is worth noting that conventional DG methods have a notable drawback compared to conforming methods: they require a higher number of globally-coupled degrees of freedom for the same mesh. However, a special class of the discontinuous Galerkin method known as the hybridizable discontinuous Galerkin (HDG) method \cite{cockburn2009unified} for elliptic problems has also emerged as a notable approach in this field and offers a distinct advantage over traditional DG methods by limiting the globally-coupled degrees of freedom to the numerical traces on the mesh skeleton. This approach has attracted significant research attention, and HDG methods for scalar advection-diffusion problems have been widely investigated in works such as \cite{chen2012analysis,chen2014analysis,cockburn2009hybridizable,fu2015analysis,qiu2016hdg,nguyen2009implicit}. These studies delve into the analysis and development of HDG methods, exploring their potential for addressing the challenges posed by scalar advection-diffusion problems.

Recently, the application of the convection-diffusion model in vector field problems, such as electromagnetic fields, has become increasingly important. The complex mathematical form and structure of the convection term in vector fields necessitate further research in this area. In the context of exterior calculus, the magnetic advection-diffusion problem and scalar advection-diffusion problem are essentially specific situations of an advection-diffusion problem for different types of differential forms, which may share common characteristics. Motivated by \cite{xu1999monotone}, a simplex-averaged finite element (SAFE) method was proposed for both scalar and vector cases of advection-diffusion problems in \cite{wu2020simplex}. This method belongs to the class of exponential fitting methods, which naturally incorporate an upwind effect without the need for any stabilization parameters. Heumann and Hiptmair \cite{heumann2011eulerian,heumann2013stabilized,heumann2016stabilized} successfully applied a special class of upwind methods in vector fields. A class of the primal discontinuous Galerkin methods for magnetic advection-diffusion problems based on the weighted-residual approach is given in \cite{wang2022discontinuous}. However, there is currently no literature available on the use of the HDG method for magnetic advection-diffusion problems, despite extensive research on the HDG methods for Maxwell equations \cite{chen2017superconvergent,chen2018priori,chen2019analysis,du2020unified} and time-harmonic Maxwell equations \cite{nguyen2011hybridizable,lu2017absolutely}. Among them, a unified error analysis of HDG methods for the static Maxwell equations is provided in \cite{du2020unified}.

In this paper, we present a HDG method for the magnetic advection-diffusion problem. To accomplish this, we introduce an auxiliary variable $\bm{w}=\varepsilon\nabla\times\bm{u}$ as a new unknown. Our work presents, for the first time, a mixed formulation for expressing the magnetic advection-diffusion problem (\ref{eq:original}), as outlined below:
\begin{equation}\label{eq:mixed}
	\begin{aligned}
		\varepsilon^{-1}\bm{w}-\nabla\times\bm{u}&=0 \text{ in }\Omega,\\
		\nabla\times\bm{w}-\bm{\beta}\times(\nabla\times\bm{u})+\nabla(\bm{\beta\cdot\bm{u}})+\gamma\bm{u}&=\bm{f}\text{ in }\Omega,\\
		\bm{n}\times\bm{u}+\chi_{\Gamma^-}(\bm{u}\cdot\bm{n})\bm{n}&=\bm{g} \text{ on }\partial \Omega.
	\end{aligned}
\end{equation}

As a key component of HDG method, we carefully construct numerical traces, which play a crucial role in the stability. Unlike the scalar case, in the vector field scenario, we  introduce two distinct stabilization parameters: $\tau_t$ for the tangential trace and $\tau_n$ for the normal trace. The presence of these two parameters is important for ensuring stability and convergence of the method. It is worth emphasizing that $\tau_t$ and $\tau_n$ serve different purposes in achieving stability and convergence, thus the requirements for $\tau_t$ and $\tau_n$ differ accordingly. This difference can  provide some insights for understanding the underlying mechanism of stabilization.

The HDG method provides an energy identity that demonstrates its stability at first glance. However, the scheme analysis is established under a general assumption regarding variable advection and reaction. This assumption allows for a degenerate Friedrichs system, where the minimal eigenvalue of the Friedrichs system only needs to be nonnegative (see Assumption \ref{ass:reaction}). As indicated in the energy identity, the effective reaction matrix is degenerate and does not provide effective control over the $L^2$ norm. Therefore, drawing inspiration from \cite{ayuso2009discontinuous} and \cite{fu2015analysis}, we introduce a weight function to obtain a suitable test function that ensures inf-sup stability. Subsequently, we derive a priori error analysis for the proposed HDG method in terms of the energy norm.

As widely known, HDG methods for scalar diffusion-dominated problems exhibit superconvergence properties. These properties allow for local postprocessing, resulting in a new approximation that possibly converges with a better accuracy. Such techniques have been extensively utilized in various studies, including \cite{cockburn2009superconvergent,cockburn2010projection,nguyen2011hdg}. For the Maxwell problem, a postprocessing method is proposed in \cite{nguyen2011hybridizable}, aiming at an improved accuracy in the $\bm{H}({\rm curl})$-norm. However, this postprocessing method is limited to two-dimensional problems and cannot be extended to three-dimensional vector problems. Hence, in this paper, we present a local postprocessing method that is suitable for both two-dimensional and three-dimensional problems. In the diffusion-dominated case, this method demonstrate similar accuracy improvements to the findings in \cite{nguyen2011hybridizable}, as evidenced by numerical tests.

The remaining sections of this paper are organized as follows. Section \ref{sec:preliminary} provides the necessary preliminary results, including the assumptions and notation used throughout the paper. In Section \ref{sec:HDG}, we present the HDG method for solving the mixed magnetic advection-diffusion problem. We will discuss the requirements for the stabilization parameters, present the characterization of the Hybridizable Discontinuous Galerkin (HDG) method, and propose an energy norm that will be utilized for further analysis. Section \ref{sec:convergence} focuses on the stability analysis and provides a priori error estimates based on the energy norm. In Section \ref{sec:postprocessing}, we introduce various postprocessing methods aimed at achieving improved broken $\bm{H}({\rm curl})$ approximations. Finally, in Section \ref{sec:numerical}, we present numerical results to evaluate the performance of the HDG method and the postprocessing techniques.

\section{Preliminaries} \label{sec:preliminary}
In this section, we present several assumptions that are crucial for establishing the well-posedness of our method. Additionally, we introduce some notation to enhance a clear understanding of the method. Throughout the paper, we adhere to the standard notation and definitions for Sobolev spaces (cf. \cite{adams2003sobolev}).

\subsection{Assumptions on velocity field}
As is proposed by \cite{ayuso2009discontinuous}, we adopt an assumption on the velocity field $\bm{\beta}$.

\begin{assumption}[neither closed curve nor stationary point]\label{ass:beta}
	The velocity field $\bm{\beta}\in W^{1,\infty}(\Omega)$ has neither closed curves nor stationary points, i.e., 
	\begin{equation}
		\bm{\beta}\in W^{1,\infty}(\Omega) \text{ has no closed curves,} \quad \bm{\beta}(\bm{x})\neq \bm{0} \quad \forall \bm{x}\in \Omega.
	\end{equation}
\end{assumption} 
This implies that there exists a smooth function  $\psi$ depending on $\bm{\beta}$ so that
$$
\bm{\beta}\cdot\nabla\psi(\bm{x})\ge b_0, \quad \bm{x}\in \Omega,
$$
for some constant $b_0>0$. See \cite{devinatz1974asymptotic} or \cite[Appendix A]{ayuso2009discontinuous} for a proof.

The subsequent assumption gives special emphasis on the magnetic advection operator, namely the Lie derivative,
\begin{equation} \label{eq:Lie-advection}
	L_{\bm{\beta}} \bm{u} := - \bm{\beta} \times (\nabla \times \bm{u}) +
	\nabla (\bm{\beta} \cdot \bm{u}),
\end{equation}
and its formal dual operator 
\begin{equation} \label{eq:dual-Lie}
	\mathcal{L}_{{\bm{\beta}}} \bm{u} := \nabla \times ({\bm{\beta}}
	\times \bm{u}) - {\bm{\beta}} \nabla \cdot \bm{u},
\end{equation}
which yields, after a direct calculation, that for any domain $D$
\begin{subequations} \label{eq:Lie}
	\begin{align}
		L_{{\bm{\beta}}} \bm{u}+ \mathcal{L}_{{\bm{\beta}}}\bm{u}  &= -
		(\nabla \cdot {\bm{\beta}}) \bm{u} + \left[ \nabla {\bm{\beta}} +
		(\nabla {\bm{\beta}})^T \right] \bm{u}, \label{eq:Lie-1} \\
		(L_{{\bm{\beta}}} \bm{u}, \bm{v})_D &= (\bm{u},
		\mathcal{L}_{{\bm{\beta}}}\bm{v})_D  + \langle \bm{\beta} \cdot
		\bm{n}, \bm{u} \cdot \bm{v} \rangle_{\partial D}, \quad \forall
		\bm{u}, \bm{v} \in \bm{H}^{1}(D). \label{eq:Lie-2}
	\end{align}
\end{subequations}

Similar to \cite{wang2022discontinuous}, we present the following assumption regarding the minimum eigenvalue of a degenerate ``effective'' reaction matrix $\mathcal{R}$.
\begin{assumption}[degenerate Friedrichs system]\label{ass:reaction}
	We assume that $\gamma\in L^\infty(\Omega)$ and $\bm{\beta}\in \bm{W}^{1,\infty}(\Omega)$ are such that
\begin{equation} \label{eq:Friedrichs}
	\rho(\bm{x}) :=  \lambda_{\min}(\mathcal{R}) :=\lambda_{\min} \left[ 
	(\gamma - \frac{\nabla \cdot \bm{\beta}}{2}) I + \frac{\nabla
		\bm{\beta} + (\nabla \bm{\beta})^T}{2} \right] \geq \rho_0 \geq
	0, \quad \bm{x}\in \Omega.
\end{equation}
\end{assumption}

\subsection{Mesh and approximation spaces}
Here, we introduce the notation that will be used to describe the HDG method. Let $\mathcal{T}_h$ be a conforming triangulation of $\Omega$ made of shape-regular and quasi-uniform simplicial elements; The diameter of each element $T$ is denoted as $h_T$, and $h=\max_{T\in\mathcal{T}_h}h_T$. Given an element $T\in\mathcal{T}_h$, $\partial T$ represents the set of its facets $F$. The set of all (boundary) facets is denoted by $\mathcal{F}_h$ ($\mathcal{F}_h^\partial$), and $\mathcal{F}_h^{\partial,-}$ and $\mathcal{F}_h^{\partial,+}$ are subsets of $\mathcal{F}_h^\partial$, representing the facets on the inflow and outflow boundaries, respectively.  Unlike $\mathcal{F}_h$, $\partial\mathcal{T}_h$ is used to represent the union of $\partial T$. For functions $\bm{u}$ and $\bm{v}$, we write
$$
(\bm{u},\bm{v})_{\mathcal{T}_h}:=\sum_{T\in\mathcal{T}_h}(\bm{u},\bm{v})_T,\quad\langle \bm{u},\bm{v}\rangle_{\partial\mathcal{T}_h}:=\sum_{T\in\mathcal{T}_h}\langle \bm{u},\bm{v}\rangle_{\partial T},
$$
where $(\cdot,\cdot)_D$ denotes the integral over the domain $D\subset\mathbb{R}^d$, and $\langle\cdot,\cdot\rangle_D$ denotes the integral over $D\subset\mathbb{R}^{d-1}$. 

Let $\bm{\mathcal{P}}_k(D)$ denote the set of polynomials of degree at most $k ~(k\ge 0)$ on a domain $D$. We define the following finite element spaces:
\begin{subequations}
\begin{align}
	\bm{W}_h&:=\{\bm{w}\in \bm{L}^2(\Omega):\bm{w}|_{T}\in \bm{\mathcal{P}}_k(T),\forall T\in\mathcal{T}_h\},\\
	\bm{V}_h&:=\{\bm{v}\in \bm{L}^2(\Omega):\bm{v}|_{T}\in \bm{\mathcal{P}}_k(T),\forall T\in\mathcal{T}_h\},\\
	\bm{M}_h&:=\{\bm{\mu}\in \bm{L}^2(\mathcal{F}_h):\bm{\mu}|_{F}\in \bm{\mathcal{P}}_k(F),\forall F\in\mathcal{F}_h\}.
\end{align}
\end{subequations}
Considering the boundary condition, we also define $\bm{M}_h(\bm{g}) :=\{\bm{\mu}\in \bm{M}_h:\langle{\bm{n}\times\bm\mu}+\chi_{\Gamma^-}(\bm{\mu}\cdot\bm{n})\bm{n},\bm{\xi}\rangle_{\mathcal{F}_h^\partial }=\langle\bm{g},\bm{\xi}\rangle_{\mathcal{F}_h^\partial },\forall\bm{\xi}\in\bm{M}_h\}$.

\section{The HDG method} \label{sec:HDG}
\subsection{Formulation}
In light of \eqref{eq:mixed}, the HDG method seeks an approximation $(\bm{w}_h,\bm{u}_h,\widehat{\bm{u}}_h)\in \bm{W}_h\times\bm{V}_h\times\bm{M}_h$ such that
\begin{subequations}\label{eq:hdg}
	\begin{align}
	(\varepsilon^{-1}\bm{w}_h,\bm{r})_{\mathcal{T}_h}-(\bm{u}_h,\nabla\times\bm{r})_{\mathcal{T}_h}+\langle \widehat{\bm{u}}_h,\bm{n}\times\bm{r}\rangle_{\partial\mathcal{T}_h}&=0,\label{eq:hdg1}\\
	\begin{aligned}
		(\bm{w}_h,\nabla\times\bm{v})_{\mathcal{T}_h}+\langle\bm{n}\times\widehat{\bm{w}}_h,\bm{v}\rangle_{\partial\mathcal{T}_h}+(\bm{u}_h,\mathcal{L}_{\bm{\beta}}\bm{v}+\gamma\bm{v})_{\mathcal{T}_h}\quad  &\\ 
		+ \langle\bm{n}\times\widehat{\bm{u}}_h,\bm{\beta}\times\bm{v}\rangle_{\partial\mathcal{T}_h}+\langle \widehat{\bm{\beta}\cdot\bm{u}_h},\bm{n}\cdot\bm{v}\rangle_{\partial\mathcal{T}_h}	\end{aligned}&=(\bm{f},\bm{v})_{\mathcal{T}_h},\\
		-\langle \bm{n}\times\widehat{\bm{w}}_h-\bm{\beta}\times(\bm{n}\times\widehat{\bm{u}}_h)+\widehat{\bm{\beta}\cdot\bm{u}_h}\bm{n},\bm{\mu}\rangle_{\partial\mathcal{T}_h\backslash\mathcal{F}_h^\partial }&=0,\label{eq:hdgtrans}\\
		\langle\bm{n}\times\widehat{\bm{u}}_h+ \chi_{\Gamma^-} (\widehat{\bm{u}}_h \cdot \bm{n}) \bm{n}-\chi_{\Gamma^+}\tau_n[(\bm{u}_h-\widehat{\bm{u}}_h)\cdot\bm{n}]\bm{n},\bm{\mu}\rangle_{\mathcal{F}_h^\partial }&=\langle \bm{g},\bm{\mu}\rangle_{\mathcal{F}_h^\partial },
	\end{align}
\end{subequations}
for all $(\bm{r},\bm{v},\bm{\mu})\in \bm{W}_h\times\bm{V}_h\times\bm{M}_h$, where the numerical traces defined on $\partial \mathcal{T}_h$ are given by
\begin{subequations}
	\begin{align}
		\widehat{\bm{w}}_h&=\bm{w}_h+\tau_t(\bm{u}_h-\widehat{\bm{u}}_h)\times \bm{n},\\
		\widehat{\bm{\beta}\cdot\bm{u}_h}&=\bm{\beta}\cdot\widehat{\bm{u}}_h+\tau_n(\bm{u}_h-\widehat{\bm{u}}_h)\cdot \bm{n}.
	\end{align}
\end{subequations}
The stabilization parameters $\tau_t$ and $\tau_n$ are piecewise nonnegative constants defined on each $F\in\partial \mathcal{T}_h$. Note that these two stabilization parameters are double-valued on facet $F$, depending on the perspective from which the adjacent element is observed.

\subsection{Requirements for stabilization parameters}
To derive the convergence analysis in the next section, we introduce certain requirements on the stabilization parameters $\tau_t$ and $\tau_n$, that is, on each $F \in \partial \mathcal{T}_h$ 
\begin{subequations}\label{eq:asstau}
	\begin{align}
		0& \le\tau_t,\tau_n\le C_{up},\label{eq:asstau1}\\
		\inf_{\bm{x} \in F} \left( \tau_t-\frac{1}{2}\bm{\beta}\cdot\bm{n} \right) &\ge C_t \max_{\bm{x} \in F} |\bm{\beta}\cdot\bm{n}|,\label{eq:asstau2}\\
		\inf_{\bm{x}\in F} \left( \tau_n-\frac{1}{2}\bm{\beta}\cdot\bm{n} \right) &\ge C_n \max\{\max_{\bm{x} \in F} |\bm{\beta}\cdot\bm{n}|, C_b\},\label{eq:asstau3}\\
		\tau_t &\ge C_w\min\{\frac{\varepsilon}{h},1\},\label{eq:asstau4}
	\end{align}
\end{subequations}
for some positive constants $C_{up},C_t,C_n$, $C_b$ and $C_w$ independent of $\varepsilon$ and $h$.

It is worth noting that the condition \eqref{eq:asstau2} implies that, for every $F \in \partial T$, we have the following relation:
$$ 
\min_{\bm{x}\in F}|\tau_t - \frac12\bm{\beta} \cdot \bm{n}| \ge C \left( 
\min_{\bm{x}\in F}|\tau_t - \frac12\bm{\beta} \cdot \bm{n}|  + \max_{\bm{x}\in F} |\bm{\beta}\cdot \bm{n}|
\right) \ge C\tau_t,
$$ 
whence 
\begin{equation} \label{eq:min-max-equiv}
\max_{\bm{x}\in F} |\tau_t - \frac12\bm{\beta} \cdot \bm{n} | \leq \tau_t + \frac12 \max_{\bm{x}\in F} |\bm{\beta}\cdot\bm{n}| \le C\min_{\bm{x}\in F}|\tau_t - \frac12\bm{\beta} \cdot \bm{n}|.
\end{equation}
Similarly, by examining equation \eqref{eq:asstau3}, we observe that the above condition also holds for the normal component. In other words, if we replace $\tau_t$ with $\tau_n$ in \eqref{eq:min-max-equiv}, the same condition applies.

\subsection{A characterization of the HDG method}
We begin by expressing  the unknowns $\bm{w}_h$ and $\bm{u}_h$ in terms of the unknown $\widehat{\bm{u}}_h$.
Given $\bm{\lambda}\in \bm{L}^2(\mathcal{F}_h)$ and $\bm{f}\in\bm{L}^2(\mathcal{T}_h)$, consider the set of local problems in each $T\in\mathcal{T}_h$: find
	$$
	(\bm{w}_h,\bm{u}_h) \in \bm{W}(T)\times \bm{V}(T),
	$$
	where $\bm{W}(T):=\bm{\mathcal{P}}_k(T)$ and $\bm{V}(T):=\bm{\mathcal{P}}_k(T)$, such that
	$$
	\begin{aligned}
		(\varepsilon^{-1}\bm{w}_h,\bm{r})_{T}-(\bm{u}_h,\nabla\times\bm{r})_{T}&=-\langle \bm{\lambda},\bm{n}\times\bm{r}\rangle_{\partial T},\\
		\begin{aligned}
			(\nabla\times\bm{w}_h,\bm{v})_{T}+(\bm{u}_h,\mathcal{L}_{\bm{\beta}}\bm{v}+\gamma\bm{v})_{T}\quad  &\\ 
			+\langle\tau_t{\bm{u}}_h^t,\bm{v}^t\rangle_{\partial T}+\langle \tau_n \bm{u}_h^n,\bm{v}^n\rangle_{\partial T}	
			\end{aligned}& = 
			\begin{aligned}
			&(\bm{f},\bm{v})_{T} -\langle (\tau_n-\bm{\beta}\cdot\bm{n}){\bm\lambda}^n,\bm{v}^n\rangle_{\partial T}\\
			& ~-\langle (\tau_t-\bm{\beta}\cdot\bm{n}){\bm\lambda}^t,\bm{v}^t\rangle_{\partial T}
		\end{aligned},\\
	\end{aligned}
	$$
	for all $(\bm{r},\bm{v})\in \bm{W}(T)\times \bm{V}(T)$, where $\bm{\lambda}^t:=(\bm{n}\times\bm{\lambda})\times\bm{n}$ and $\bm{\lambda}^n:=(\bm{n}\cdot\bm{\lambda})\bm{n}$.
	
	We denote the solution of the above local problem when $\bm{\lambda}=\bm{0}$ as $(\bm{w}_h^{\bm{f}},\bm{u}_h^{\bm{f}})$. Similarly, we denote the solution when $\bm{f}=\bm{0}$ as $(\bm{w}_h^{\bm{\lambda}},\bm{u}_h^{\bm{\lambda}})$. Therefore, we can express $(\bm{w}_h,\bm{u}_h)$ as follows:
	$$
	(\bm{w}_h,\bm{u}_h) = (\bm{w}_h^{\bm{f}},\bm{u}_h^{\bm{f}}) + (\bm{w}_h^{\bm{\lambda}},\bm{u}_h^{\bm{\lambda}}).
	$$ 
	If we set $\bm{\lambda}:=\widehat{\bm{u}}_h$, we observe that $(\bm{w}_h,\bm{u}_h)$ can be expressed in terms of $\widehat{\bm{u}}_h$ and $\bm{f}$. Consequently, we can eliminate these two known quantities from the equations and solve solely for $\widehat{\bm u}_h$. By applying additional inductions, we can have that $\widehat{\bm{u}}_h\in \bm{M}_h(\bm{g})$ must satisfy
	\begin{equation} \label{eq:HDG-characterization}
	a_h(\widehat{\bm{u}}_h,\bm{\mu})=b_h(\bm{\mu}), \quad \forall \bm{\mu}\in \bm{M}_h(\bm{0}),
	\end{equation}
	where
	$$
	\begin{aligned}
		a_h(\bm{\lambda},\bm{\mu})&:=-
		\langle\bm{n}\times\bm{w}_h^{\bm{\lambda}},\bm{\mu}\rangle_{\partial \mathcal{T}_h}-\langle \tau_t(\bm{u}_h^{\bm{\lambda},t}-\bm{\lambda}^t),\bm{\mu}^t\rangle_{\partial \mathcal{T}_h}-\langle \tau_n(\bm{u}_h^{\bm{\lambda},n}-\bm{\lambda}^n),\bm{\mu}^n\rangle_{\partial \mathcal{T}_h},\\
		b_h(\bm{\mu})&:=\langle \bm{n}\times\bm{w}_h^{\bm{f}},\bm{\mu}\rangle_{\partial\mathcal{T}_h}+\langle\tau_t\bm{u}_h^{\bm{f},t},\bm{\mu}^t\rangle_{\partial \mathcal{T}_h} + \langle\tau_n\bm{u}_h^{\bm{f},n},\bm{\mu}^n\rangle_{\partial \mathcal{T}_h}.
	\end{aligned}
	$$
	
	Indeed, we use the facts that for all $\bm{\lambda} \in \bm{M}_h(\bm{g})$ and $\bm{\mu} \in\bm{M}_h(\bm{0})$, 
	$$
	\langle (\bm{\beta} \cdot \bm{n})\bm{\lambda}, \bm{\mu} \rangle_{\partial \mathcal{T}_h \setminus \mathcal{F}_h^\partial} = 0, \quad 
	\bm{n} \times \bm{\mu}|_{\mathcal{F}_h^\partial} = \bm{0}, \quad  \langle\tau_n(\bm{u}_h^{\bm{f},n} + \bm{u}_h^{\bm{\lambda},n}-\bm{\lambda}^n),\bm{\mu}^n\rangle_{ \mathcal{F}_h^\partial} = 0.
	$$
	The formulation \eqref{eq:HDG-characterization} is equivalent to the transmission condition \eqref{eq:hdgtrans}. As a result, we obtain a characterization of the HDG method in terms of $\widehat{\bm u}_h$.
		 
\subsection{Energy norm}
For the HDG method, the summation of the left-hand side terms of \eqref{eq:hdg} yields the following bilinear form:
\begin{equation}\label{eq:bilinear}
	\begin{aligned}
		&B\big((\bm{w},\bm{u},\bm{\lambda}),(\bm{r},\bm{v},\bm{\mu})\big):=\\&(\varepsilon^{-1}\bm{w},\bm{r})_{\mathcal{T}_h}-(\bm{u},\nabla\times\bm{r})_{\mathcal{T}_h}+\langle\bm{\lambda},\bm{n}\times\bm{r}\rangle_{\partial\mathcal{T}_h}+(\bm{u},\mathcal{L}_{\bm{\beta}}\bm{v}+\gamma\bm{v})_{\mathcal{T}_h}\\
		&~+\langle(\bm{\beta}\cdot\bm{n})\bm{\lambda},\bm{v}-\bm{\mu}\rangle_{\partial\mathcal{T}_h}	+(\bm{w},\nabla\times\bm{v})_{\mathcal{T}_h}+\langle \bm{n}\times\bm{w},\bm{v}-\bm{\mu}\rangle_{\partial\mathcal{T}_h}\\
		&~+\langle\tau_t \bm{n}\times(\bm{u}-\bm{\lambda}),\bm{n}\times(\bm{v}-\bm{\mu})\rangle_{\partial\mathcal{T}_h}+\langle\tau_n \bm{n}\cdot(\bm{u}-\bm{\lambda}),\bm{n}\cdot(\bm{v}-\bm{\mu})\rangle_{\partial\mathcal{T}_h}\\
		&~+\langle(\bm{\beta}\cdot\bm{n})\bm{\lambda}\cdot\bm{n},\bm{\mu}\cdot\bm{n}\rangle_{\mathcal{F}_h^{\partial,+}}.
	\end{aligned}
\end{equation}
Then, the HDG method can be reformulated as follows: Find $(\bm{w}_h,\bm{u}_h,\widehat{\bm{u}}_h)\in\bm{W}_h\times\bm{V}_h\times\bm{M}_h(\bm{g})$ such that
\begin{equation}
	B\big((\bm{w}_h,\bm{u}_h,\widehat{\bm{u}}_h),(\bm{r},\bm{v},\bm{\mu})\big)=(\bm{f},\bm{v})_{\mathcal{T}_h},
\end{equation}
for all $(\bm{r},\bm{v},\bm{\mu})\in \bm{W}_h\times\bm{V}_h\times\bm{M}_h(\bm{0})$.

If $\bm{g}\equiv\bm{0}$, we can induce that $\widehat{\bm{u}}_h\times\bm{n}\equiv\bm{0}$ on $\mathcal{F}_h^\partial$ and $\widehat{\bm{u}}_h\cdot \bm{n}\equiv0$ on $\mathcal{F}_h^{\partial,-}$. Exploiting this property and choosing $\bm{r}=\bm{w}_h$, $\bm{v}=\bm{u}_h$, and $\bm{\mu}=\widehat{\bm{u}}_ h$, we can integrate by parts in \eqref{eq:bilinear} to derive the following energy equality: 
\begin{equation}
\begin{aligned}
		& (\varepsilon^{-1}\bm{w}_h,\bm{w}_h)_{\mathcal{T}_h}+\langle (\tau_t-\frac{1}{2}\bm{\beta}\cdot\bm{n})\bm{n}\times(\bm{u}_h-\widehat{\bm{u}}_h),\bm{n}\times (\bm{u}_h-\widehat{\bm{u}}_h)\rangle_{\partial\mathcal{T}_h}\\
		&+\langle (\tau_n-\frac{1}{2}\bm{\beta}\cdot\bm{n})\bm{n}\cdot(\bm{u}_h-\widehat{\bm{u}}_h),\bm{n}\cdot (\bm{u}_h-\widehat{\bm{u}}_h)\rangle_{\partial\mathcal{T}_h} \\	
		&+\frac{1}{2} \langle (\bm{\beta} \cdot\bm{n})\widehat{\bm{u}}_h \cdot\bm{n},
		\widehat{\bm{u}}_h\cdot \bm{n}\rangle_{\mathcal{F}_h^{\partial,+}}+(\mathcal{R}\bm{u}_h,\bm{u}_h)_{\mathcal{T}_h}=(\bm{f},\bm{u}_h)_{\mathcal{T}_h}.
	\end{aligned}
\end{equation}
Here, we have  $\langle (\bm{\beta} \cdot\bm{n})\widehat{\bm{u}}_h \cdot\bm{n},
\widehat{\bm{u}}_h\cdot \bm{n}\rangle_{\mathcal{F}_h^{\partial,+}}\ge 0$, which holds true because $\bm{\beta}\cdot\bm{n}\ge 0$ on $\mathcal{F}_h^{\partial,+}$.

Motivated by the energy equality, we introduce the energy norm $\triplenorm{\cdot}$ defined as follows:
\begin{equation} \label{eq:energy-norm}
	\begin{aligned}
	\triplenorm{(\bm{r},\bm{v},\bm{\mu})}:=& \bigg(\|\varepsilon^{-1/2}\bm{r}\|_{\mathcal{T}_h}^2+\|\bm{v}\|_{\mathcal{T}_h}^2+\||\tau_t-\frac{1}{2}\bm{\beta}\cdot\bm{n}|^{1/2}(\bm{v}-\bm{\mu})\times\bm{n}\|_{\partial\mathcal{T}_h}^2\\
	&+\||\tau_n-\frac{1}{2}\bm{\beta}\cdot\bm{n}|^{1/2}(\bm{v}-\bm{\mu})\cdot\bm{n}\|_{\partial\mathcal{T}_h}^2\bigg)^{1/2},
	\end{aligned}
\end{equation}
where $\|\cdot\|_D$ denotes the standard $L^2$ norm in the domain $D$. It is readily seen that $\triplenorm{\cdot}$ is indeed a norm under the requirements \eqref{eq:asstau}.

\begin{remark}
	Considering the assumption of a degenerate reaction matrix $\mathcal{R}$, the standard energy equality only offers a degenerate control over the $L^2$ norm of $\bm{u}_h$. Hence, additional arguments are necessary to establish stability.
\end{remark}

\section{Convergence analysis} \label{sec:convergence}
In this section, we will demonstrate the inf-sup stability of our previously mentioned scheme using a weight function and provide the corresponding a priori error estimate. Our primary focus lies in the stability of the proposed scheme as $\varepsilon\rightarrow0$. Thus, we will limit our attention to the scenario where $0< \varepsilon\le 1$. By doing so, the constants in the subsequent results are independent of both the diffusion coefficient $\varepsilon$ and the mesh size $h$.

\subsection{Inf-sup stability}
We shall discuss the inf-sup stability of the bilinear form. It should be noted that when the boundary data $\bm{g}$ is non-zero, we can decompose $\widehat{\bm u}_h$ into two parts: one related to the boundary conditions and the other related to homogeneous boundary. The former can be included in the right-hand side of the problem and is thus independent of the bilinear form. Therefore, we only need to consider the case when $\bm{g}=\bm{0}$.

\subsubsection{Weight function}
We introduce the weight function $\varphi$
\begin{equation}\label{eq:varphi}
\varphi:=e^{-\psi}+\kappa,
\end{equation}
where $\kappa>0$ is a positive constant to be determined later. We can then give the following weighted coecivity using the weight function $\varphi$.
\begin{lemma}[weighted coecivity]
	Let $\varphi$ be given in \eqref{eq:varphi} with $\kappa \ge 1+2b_0^{-1}\|e^{-\psi}\|_{L^\infty(\Omega)}\|\nabla\psi\|^2_{L^{\infty}(\Omega)}$ and $\tau_t,\tau_n$ satisfy \eqref{eq:asstau}. Then the following inequality holds for all $(\bm{w}_h,\bm{u}_h,\bm{\lambda}_h)\in \bm{W}_h\times\bm{V}_h\times\bm{M}_h(\bm{0})$,
	\begin{equation} \label{eq:weightcoe}
			B\big((\bm{w}_h,\bm{u}_h,\bm{\lambda}_h),(\bm{w}_h\varphi,\bm{u}_h\varphi,\bm{\lambda}_h\varphi)\big)\ge C_1\triplenorm{(\bm{w}_h,\bm{u}_h,\bm{\lambda}_h)}^2,
	\end{equation}
for some positive constant $C_1$ independent of $\varepsilon$ and $h$.
\end{lemma}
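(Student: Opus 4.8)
The plan is to test the bilinear form $B$ against the weighted argument $(\bm{w}_h\varphi,\bm{u}_h\varphi,\bm{\lambda}_h\varphi)$ and show that the result dominates each of the four terms of $\triplenorm{(\bm{w}_h,\bm{u}_h,\bm{\lambda}_h)}^2$. The starting observation is that choosing $\varphi\equiv 1$ reproduces the energy equality derived above, so the weight $\varphi = e^{-\psi}+\kappa$ is introduced precisely to convert the degenerate reaction control $(\mathcal{R}\bm{u}_h,\bm{u}_h)_{\mathcal{T}_h}\ge 0$ into a genuine coercive term $\gtrsim \|\bm{u}_h\|_{\mathcal{T}_h}^2$. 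First I would expand $B\big((\bm{w}_h,\bm{u}_h,\bm{\lambda}_h),(\bm{w}_h\varphi,\bm{u}_h\varphi,\bm{\lambda}_h\varphi)\big)$ term by term. The volume diffusion term immediately gives $(\varepsilon^{-1}\bm{w}_h,\bm{w}_h\varphi)_{\mathcal{T}_h}\ge \kappa\,\|\varepsilon^{-1/2}\bm{w}_h\|_{\mathcal{T}_h}^2$ since $\varphi\ge\kappa>0$. The curl-coupling terms $-(\bm{u}_h,\nabla\times(\bm{w}_h\varphi))_{\mathcal{T}_h}+(\bm{w}_h,\nabla\times(\bm{u}_h\varphi))_{\mathcal{T}_h}$ together with the $\langle\bm{\lambda}_h,\bm{n}\times\bm{w}_h\varphi\rangle$ and $\langle\bm{n}\times\bm{w}_h,(\bm{u}_h-\bm{\lambda}_h)\varphi\rangle$ boundary terms need to be handled via the product rule $\nabla\times(\bm{w}_h\varphi)=\varphi\nabla\times\bm{w}_h+\nabla\varphi\times\bm{w}_h$ and integration by parts; the ``symmetric'' part collapses to boundary contributions exactly as in the energy identity (now carrying a factor $\varphi$), while the ``commutator'' part produces a volume term of the form $(\bm{w}_h,\nabla\varphi\times\bm{u}_h)_{\mathcal{T}_h}$ (up to sign) which is lower order and must be absorbed.

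The core of the argument is the advection–reaction block $(\bm{u}_h,\mathcal{L}_{\bm{\beta}}(\bm{u}_h\varphi)+\gamma\bm{u}_h\varphi)_{\mathcal{T}_h}+\langle(\bm{\beta}\cdot\bm{n})\bm{\lambda}_h,(\bm{u}_h-\bm{\lambda}_h)\varphi\rangle_{\partial\mathcal{T}_h}+\langle(\bm{\beta}\cdot\bm{n})\bm{\lambda}_h\cdot\bm{n},\bm{\lambda}_h\cdot\bm{n}\,\varphi\rangle_{\mathcal{F}_h^{\partial,+}}$. Using \eqref{eq:dual-Lie} one has $\mathcal{L}_{\bm{\beta}}(\bm{u}_h\varphi) = \varphi\,\mathcal{L}_{\bm{\beta}}\bm{u}_h - (\bm{\beta}\cdot\nabla\varphi)\bm{u}_h + (\text{terms with }\nabla\varphi\times\bm{u}_h)$, or more cleanly I would apply the integration-by-parts identity \eqref{eq:Lie-2} on each element with $\bm{v}=\bm{u}_h\varphi$ to move $\mathcal{L}_{\bm{\beta}}$ back onto $L_{\bm{\beta}}$ and then use \eqref{eq:Lie-1}. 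The key point is that $(\bm{u}_h,\mathcal{L}_{\bm{\beta}}(\bm{u}_h\varphi))_{\mathcal{T}_h}$ contributes, after symmetrization, the volume term $\tfrac12(\bm{u}_h,(\mathcal{R}\varphi - \tfrac12(\bm{\beta}\cdot\nabla\varphi)I + \dots)\bm{u}_h)$-type expression — and since $\bm{\beta}\cdot\nabla\varphi = -\bm{\beta}\cdot\nabla\psi\, e^{-\psi}\le -b_0 e^{-\psi} < 0$ while $\mathcal{R}\varphi\ge 0$, the quantity $\mathcal{R}\varphi - \tfrac12(\bm{\beta}\cdot\nabla\varphi)I$ is bounded below by $\tfrac{b_0}{2}e^{-\psi}I \gtrsim I$. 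This is the mechanism that produces $\gtrsim\|\bm{u}_h\|_{\mathcal{T}_h}^2$. The choice $\kappa\ge 1+2b_0^{-1}\|e^{-\psi}\|_{L^\infty}\|\nabla\psi\|_{L^\infty}^2$ is then calibrated so that the large factor $\kappa$ in front of $\|\varepsilon^{-1/2}\bm{w}_h\|^2$ (and the use of \eqref{eq:asstau4}) lets us absorb the indefinite commutator terms $(\bm{w}_h,\nabla\varphi\times\bm{u}_h)_{\mathcal{T}_h}$ and the like by a weighted Cauchy–Schwarz/Young inequality, $|(\bm{w}_h,\nabla\varphi\times\bm{u}_h)|\le \tfrac{\kappa}{2}\|\varepsilon^{-1/2}\bm{w}_h\|^2\varepsilon\|\nabla\varphi\|_\infty^2\kappa^{-1} + \dots$, noting $\varepsilon\le 1$; the surviving $\|\bm{u}_h\|^2$ loss is then controlled by the $\tfrac{b_0}{2}e^{-\psi}$ term from the reaction block.

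The boundary terms require the requirements \eqref{eq:asstau}. The $\tau_t$- and $\tau_n$-penalty terms yield $\langle\tau_t\bm{n}\times(\bm{u}_h-\bm{\lambda}_h),\bm{n}\times(\bm{u}_h-\bm{\lambda}_h)\varphi\rangle_{\partial\mathcal{T}_h}$ etc., which combine with the $\tfrac12\bm{\beta}\cdot\bm{n}$ boundary contributions coming out of the advection integration by parts to give, using \eqref{eq:asstau2}–\eqref{eq:asstau3} (and the equivalence \eqref{eq:min-max-equiv}), a lower bound $\gtrsim \kappa\big(\||\tau_t-\tfrac12\bm{\beta}\cdot\bm{n}|^{1/2}(\bm{u}_h-\bm{\lambda}_h)\times\bm{n}\|_{\partial\mathcal{T}_h}^2 + \||\tau_n-\tfrac12\bm{\beta}\cdot\bm{n}|^{1/2}(\bm{u}_h-\bm{\lambda}_h)\cdot\bm{n}\|_{\partial\mathcal{T}_h}^2\big)$ plus the nonnegative inflow/outflow boundary pieces (which are $\ge 0$ after using $\bm{n}\times\bm{\lambda}_h=0$ on $\mathcal{F}_h^\partial$, $\bm{\lambda}_h\cdot\bm{n}=0$ on $\mathcal{F}_h^{\partial,-}$, and $\bm{\beta}\cdot\bm{n}\ge0$ on $\mathcal{F}_h^{\partial,+}$, just as in the energy equality). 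Collecting everything and taking $C_1$ to be a suitable fraction of $\min\{1,\kappa,b_0/2,\dots\}$ finishes the proof. I expect the main obstacle to be the careful bookkeeping of the commutator terms generated when $\varphi$ passes through the curl and Lie-derivative operators — in particular making sure every indefinite volume term involving $\nabla\varphi$ (there are several, coming from both the curl coupling and the advection block) is of strictly lower order and can be absorbed into the coercive $\|\varepsilon^{-1/2}\bm{w}_h\|^2$, $\|\bm{u}_h\|^2$, and interface-jump terms without circularity, which is exactly what the explicit lower bound on $\kappa$ is engineered to guarantee.
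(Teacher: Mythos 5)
Your proposal follows essentially the same route as the paper's proof: the same splitting into diffusion, curl-coupling, advection--reaction, and penalty blocks, the same symmetrization via the Lie-derivative identities producing $\tfrac12(\mathcal{R}\bm{u}_h,\bm{u}_h\varphi)-\tfrac12(\bm{\beta}\cdot\nabla\varphi,\bm{u}_h\cdot\bm{u}_h)$ with $-\bm{\beta}\cdot\nabla\varphi\ge b_0e^{-\psi}$ supplying the $L^2$ control, and the same absorption of the commutator $(\bm{w}_h,\nabla\varphi\times\bm{u}_h)$ by weighted Cauchy--Schwarz calibrated through the lower bound on $\kappa$ and $\varepsilon\le 1$. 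The only cosmetic slip is invoking \eqref{eq:asstau4}, which plays no role in this lemma (it is needed only later in the inf-sup argument); here only the nonnegativity of $\tau_t-\tfrac12\bm{\beta}\cdot\bm{n}$ and $\tau_n-\tfrac12\bm{\beta}\cdot\bm{n}$ from \eqref{eq:asstau2}--\eqref{eq:asstau3} is used.
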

\begin{proof}
	For any $(\bm{w}_h,\bm{u}_h,\bm{\lambda}_h)\in \bm{W}_h\times\bm{V}_h\times\bm{M}_h(\bm{0})$, we have
\begin{equation}
	B\big((\bm{w}_h,\bm{u}_h,\bm{\lambda}_h),(\bm{w}_h\varphi,\bm{u}_h\varphi,\bm{\lambda}_h\varphi)\big)=(\varepsilon^{-1}\bm{w}_h,\bm{w}_h\varphi)_{\mathcal{T}_h}+T_1+T_2+T_3,
\end{equation}
with
$$
\begin{aligned}
	T_1&:=-(\bm{u}_h,\nabla\times(\bm{w}_h\varphi))_{\mathcal{T}_h}+\langle\bm{\lambda}_h,\bm{n}\times(\bm{w}_h\varphi)\rangle_{\partial\mathcal{T}_h}+(\bm{w}_h,\nabla\times(\bm{u}_h\varphi))_{\mathcal{T}_h}\\
	&~~~~+\langle\bm{n}\times\bm{w}_h,\bm{u}_h\varphi-\bm{\lambda}_h\varphi\rangle_{\partial\mathcal{T}_h},\\
	T_2 &:= (\bm{u}_h,\mathcal{L}_{\bm{\beta}}(\bm{u}_h\varphi)+\gamma\bm{u}_h\varphi)_{\mathcal{T}_h}+\langle\bm{\beta}\cdot\bm{n},\bm{\lambda}_h\cdot\bm{u}_h\varphi\rangle_{\partial\mathcal{T}_h},\\
	T_3 &:= \langle\tau_t(\bm{u}_h-\bm{\lambda}_h)\times\bm{n},\varphi(\bm{u}_h-\bm{\lambda}_h)\times\bm{n}\rangle_{\partial\mathcal{T}_h}\\
	&~~~~+\langle\tau_n(\bm{u}_h-\bm{\lambda}_h)\cdot\bm{n},\varphi(\bm{u}_h-\bm{\lambda}_h)\cdot\bm{n}\rangle_{\partial\mathcal{T}_h},
\end{aligned}
$$
where we use $-\langle (\bm{\beta}\cdot\bm{n})\bm{\lambda}_h,\bm{\lambda}_h\varphi\rangle_{\partial\mathcal{T}_h}+\langle (\bm{\beta}\cdot\bm{n})\bm{\lambda}_h\cdot\bm{n}, \bm{\lambda}_h\cdot \bm{n}\varphi\rangle_{\mathcal{F}_h^{\partial,+}}=0$ since $\bm{\lambda}_h$ is single-valued on the interior facets and $\bm{n}\times\bm{\lambda}_h +\chi_{\Gamma^-}(\bm{\lambda}_h\cdot\bm{n})\bm{n}=0$ on $\partial\Omega$.

By integration by parts, we obtain
$$
\begin{aligned}
T_1 &=-(\bm{u}_h,\nabla\times(\bm{w}_h\varphi))_{\mathcal{T}_h}+\langle\bm{u}_h,\bm{n}\times\bm{w}_h\varphi\rangle_{\partial\mathcal{T}_h}+(\bm{w}_h,\nabla\times(\bm{u}_h\varphi))_{\mathcal{T}_h}\\
&= (\bm{w}_h,\bm{u}_h\times \nabla \varphi)_{\mathcal{T}_h} =-(\bm{w}_h,e^{-\psi}\nabla\psi\times\bm{u}_h)_{\mathcal{T}_h},
\end{aligned}
$$
and 
$$
	\begin{aligned}
T_2=&~\frac{1}{2}(L_{\bm{\beta}}\bm{u}_h+\mathcal{L}_{\bm{\beta}}\bm{u}_h\,\bm{u}_h\varphi)_{\mathcal{T}_h}  + (\gamma \bm{u}_h,\bm{u}_h\varphi)_{\mathcal{T}_h}+\frac{1}{2}(\nabla \varphi\times(\bm{\beta}\times\bm{u}_h),\bm{u}_h)_{\mathcal{T}_h}\\
&~ -\frac{1}{2}(\nabla\varphi\cdot\bm{u}_h,\bm{\beta}\cdot\bm{u}_h)_{\mathcal{T}_h}-\frac{1}{2}\langle \bm{\beta}\cdot\bm{n},\bm{u}_h\cdot\bm{u}_h\varphi\rangle_{\partial\mathcal{T}_h}+\langle\bm{\beta}\cdot\bm{n},\bm{\lambda}_h\cdot\bm{u}_h\varphi\rangle_{\partial\mathcal{T}_h}\\
=&~\frac{1}{2}(\mathcal{R}\bm{u}_h,\bm{u}_h\varphi)_{\mathcal{T}_h}-\frac{1}{2}(\bm{\beta}\cdot\nabla\varphi,\bm{u}_h\cdot\bm{u}_h)_{\mathcal{T}_h}+\frac{1}{2} \langle (\bm{\beta} \cdot\bm{n})\bm{\lambda}_h \cdot\bm{n},
\bm{\lambda}_h\cdot \bm{n}\varphi\rangle_{\mathcal{F}_h^{\partial,+}}\\
&~-\frac{1}{2}\langle \bm{\beta}\cdot\bm{n}(\bm{u}_h-\bm{\lambda}_h),(\bm{u}_h-\bm{\lambda}_h)\varphi\rangle_{\partial\mathcal{T}_h}.
\end{aligned}
$$
Here, we utilize the fact that $\langle(\bm{\beta}\cdot\bm{n}) \bm{\lambda}_h,\bm{\lambda}_h\varphi\rangle_{\partial\mathcal{T}_h}=\langle(\bm{\beta}\cdot\bm{n})\bm{\lambda}_h\cdot\bm{n},\bm{\lambda}_h\cdot\bm{n}\varphi\rangle_{\mathcal{F}^{\partial,+}_h}$. 
Collecting $T_1,T_2$ and $T_3$, then we have
$$
\begin{aligned}
	B\big(&(\bm{w}_h,\bm{u}_h,\bm{\lambda}_h),(\bm{w}_h\varphi,\bm{u}_h\varphi,\bm{\lambda}_h\varphi)\big)\\=&~(\varepsilon^{-1}\bm{w}_h,\bm{w}_h\varphi)_{\mathcal{T}_h}-(\bm{w}_h,e^{-\psi}\nabla\psi\times\bm{u}_h)_{\mathcal{T}_h}+\frac{1}{2}(\mathcal{R}\bm{u}_h,\bm{u}_h\varphi)_{\mathcal{T}_h}\\
	&~+\langle(\tau_t-\frac{1}{2}\bm{\beta}\cdot\bm{n})(\bm{u}_h-\bm{\lambda}_h)\times\bm{n},\varphi(\bm{u}_h-\bm{\lambda}_h)\times\bm{n}\rangle_{\partial\mathcal{T}_h}\\
	&~+\langle(\tau_n-\frac{1}{2}\bm{\beta}\cdot\bm{n})(\bm{u}_h-\bm{\lambda}_h)\cdot\bm{n},\varphi(\bm{u}_h-\bm{\lambda}_h)\cdot\bm{n}\rangle_{\partial\mathcal{T}_h}\\
	&~-\frac{1}{2}(\bm{\beta}\cdot\nabla\varphi,\bm{u}_h\cdot\bm{u}_h)_{\mathcal{T}_h}+\frac{1}{2} \langle (\bm{\beta} \cdot\bm{n})\bm{\lambda}_h \cdot\bm{n},
	\bm{\lambda}_h\cdot \bm{n}\varphi\rangle_{\mathcal{F}_h^{\partial,+}}.
\end{aligned}
$$
With assumption \ref{ass:beta}, we have $-\bm{\beta}\cdot \nabla\varphi = \bm{\beta}\cdot \nabla\psi e^{-\psi}\ge b_0e^{-\psi}$, which insures that
\begin{equation} \label{eq:weighted-bd1}
\begin{aligned}
	& B\big((\bm{w}_h,\bm{u}_h,\bm{\lambda}_h),(\bm{w}_h\varphi,\bm{u}_h\varphi,\bm{\lambda}_h\varphi)\big)\\
	\ge &~ \kappa(\varepsilon^{-1}\bm{w}_h,\bm{w}_h)_{\mathcal{T}_h}-(\bm{w}_h,e^{-\psi}\nabla\psi\times\bm{u}_h)_{\mathcal{T}_h}+\frac{b_0}{2}(\bm{u}_h,e^{-\psi}\bm{u}_h)_{\mathcal{T}_h}\\
	&~+\kappa\langle(\tau_t-\frac{1}{2}\bm{\beta}\cdot\bm{n})(\bm{u}_h-\bm{\lambda}_h)\times\bm{n},(\bm{u}_h-\bm{\lambda}_h)\times\bm{n}\rangle_{\partial\mathcal{T}_h}\\
	&~+\kappa\langle(\tau_n-\frac{1}{2}\bm{\beta}\cdot\bm{n})(\bm{u}_h-\bm{\lambda}_h)\cdot\bm{n},(\bm{u}_h-\bm{\lambda}_h)\cdot\bm{n}\rangle_{\partial\mathcal{T}_h}.\\
\end{aligned}
\end{equation}
Using the Cauchy-Schwarz inequality, we have
\begin{equation} \label{eq:weighted-bd2}
	(\bm{w}_h,e^{-\psi}\nabla\varphi\times\bm{u}_h)_{\mathcal{T}_h} \le \frac{1}{2}\big[\frac{2\|\nabla\psi\|^2_{L^{\infty(\Omega)}}}{b_0}(e^{-\psi}\bm{w}_h,\bm{w}_h)_{\mathcal{T}_h}+\frac{b_0}{2}(e^{-\psi}\bm{u}_h,\bm{u}_h)_{\mathcal{T}_h}\big].
\end{equation}

Therefore, by imposing the condition $\kappa \ge 1+2b_0^{-1}\|e^{-\psi}\|_{L^\infty(\Omega)}\|\nabla\psi\|^2_{L^{\infty}(\Omega)}$ and considering $0<\varepsilon\le 1$, we can deduce from \eqref{eq:weighted-bd1} and \eqref{eq:weighted-bd2} that
\begin{equation}
B\big((\bm{w}_h,\bm{u}_h,\bm{\lambda}_h),(\bm{w}_h\varphi,\bm{u}_h\varphi,\bm{\lambda}_h\varphi)\big)\ge C_1\triplenorm{(\bm{w}_h,\bm{u}_h,\bm{\lambda}_h)}^2,
\end{equation}
with $C_1$ that depends on $\psi,\kappa$, and $b_0$, but is independent of $\varepsilon$ and $h$.
\qed
\end{proof}
\subsubsection{Projections}
Let $\bm{\Pi}_h^{\bm{W}}$ and $\bm{\Pi}_h^{\bm{V}}$  denote the
orthogonal projection ($L^2$-projection) from $\bm{L}^2(\mathcal{T}_h)$ onto $\bm{W}_h$ and $\bm{V}_h$ respectively, and $\bm{P}_{\bm{M}}$ be the $L^2$-projection onto $\bm{M}_h$. Firstly, for the projections we have the following estimate for the difference between $\bm{u}_h\varphi$ and $\bm{\Pi}(\bm{u}_h\varphi)$ where $\bm{\Pi}$ can be $\bm{\Pi}_h^{\bm{W}}, \bm{\Pi}_h^{\bm{V}}$ or $\bm{P}_{\bm{M}}$ with $\bm{u}_h\in \bm{W}_h,\bm{V}_h$ or $\bm{M}_h$, respectively.

\begin{lemma}[superconvergence]
	Let $\varphi\in W^{k+1,\infty}(\Omega)$ be the function introduced in \eqref{eq:varphi}. Then, for $(\bm{r}_h, \bm{v}_h) \in\bm{W}_h\times  \bm{V}_h$ and $\kappa\in\mathbb{R}$, it holds that
\begin{equation}\label{eq:projection}
	\begin{aligned}
		&\left\|{\bm{\Pi}}_h^{\bm{W}}(\varphi \bm{r}_h)-\varphi  \bm{r}_h\right\|_{T} \leq C h_T\|e^{-\psi}\|_{W^{k+1, \infty}(T)}\|\bm{r}_h\|_{T},\\
		&\left\|{\bm{\Pi}}_h^{\bm{W}}(\varphi  \bm{r}_h)-\varphi  \bm{r}_h\right\|_{F} \leq C h_T^{1/2}\|e^{-\psi}\|_{W^{k+1, \infty}(T)}\| \bm{r}_h\|_{T}, \quad \forall F \in \partial T, \\
		&\left\|{\bm{\Pi}}_h^{\bm{V}}(\varphi \bm{v}_h)-\varphi  \bm{v}_h\right\|_{T} \leq C h_T\|e^{-\psi}\|_{W^{k+1, \infty(T)}}\| \bm{v}_h\|_{T},\\
		&\left\|{\bm{\Pi}}_h^{\bm{V}}(\varphi  \bm{v}_h)-\varphi  \bm{v}_h\right\|_{F} \leq C h_T^{1 / 2}\|e^{-\psi}\|_{W^{k+1, \infty}(T)}\| \bm{v}_h\|_{T}, \quad \forall F \in \partial T, 
	\end{aligned}
\end{equation}
where the constant $C$ depends only on $k$, shape-regularity constant, and $\Omega$.
\end{lemma}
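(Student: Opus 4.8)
The plan is to prove the four estimates in \eqref{eq:projection} by exploiting a single structural fact: the $L^2$-projections $\bm{\Pi}_h^{\bm{W}}$, $\bm{\Pi}_h^{\bm{V}}$ preserve polynomials of degree $\le k$ elementwise, and $\bm{r}_h, \bm{v}_h \in \bm{\mathcal{P}}_k(T)$ on each $T$. The key observation is that $\varphi = e^{-\psi} + \kappa$, so $\varphi \bm{r}_h = (e^{-\psi})\bm{r}_h + \kappa \bm{r}_h$; since $\bm{\Pi}_h^{\bm{W}}(\kappa \bm{r}_h) = \kappa \bm{r}_h$ exactly (as $\kappa\bm{r}_h \in \bm{W}_h$), the projection error only sees the $e^{-\psi}\bm{r}_h$ part:
\[
\bm{\Pi}_h^{\bm{W}}(\varphi\bm{r}_h) - \varphi\bm{r}_h = \bm{\Pi}_h^{\bm{W}}(e^{-\psi}\bm{r}_h) - e^{-\psi}\bm{r}_h.
\]
This is why the right-hand sides involve only $\|e^{-\psi}\|_{W^{k+1,\infty}}$ and not $\kappa$, which explains the statement's emphasis on $\kappa \in \mathbb{R}$ being arbitrary.

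Next I would reduce to a standard approximation-theory estimate on a fixed reference element $\widehat{T}$ via the affine pullback. On $\widehat{T}$, for any smooth scalar $a$ and any $\widehat{\bm{r}} \in \bm{\mathcal{P}}_k(\widehat{T})$, the quantity $\widehat{\bm{\Pi}}(a\widehat{\bm{r}}) - a\widehat{\bm{r}}$ vanishes whenever $a$ is itself a polynomial of degree $\le k$ (more precisely, when $a\widehat{\bm{r}} \in \bm{\mathcal{P}}_k$); hence by the Bramble--Hilbert lemma applied to the bounded linear map $a \mapsto \widehat{\bm{\Pi}}(a\widehat{\bm{r}}) - a\widehat{\bm{r}}$ from $W^{k+1,\infty}(\widehat{T})$ into $\bm{L}^2(\widehat{T})$ (for fixed $\widehat{\bm{r}}$, with operator norm controlled by $\|\widehat{\bm{r}}\|_{\bm{L}^2(\widehat{T})}$ up to a constant depending only on $k$), one gets
\[
\|\widehat{\bm{\Pi}}(a\widehat{\bm{r}}) - a\widehat{\bm{r}}\|_{\bm{L}^2(\widehat{T})} \le C(k)\, |a|_{W^{k+1,\infty}(\widehat{T})}\, \|\widehat{\bm{r}}\|_{\bm{L}^2(\widehat{T})}.
\]
Scaling back to $T$ via the standard estimates $\|\cdot\|_{L^2(T)} \sim h_T^{d/2}\|\cdot\|_{L^2(\widehat T)}$, $|a\circ F_T|_{W^{k+1,\infty}(\widehat T)} \le C h_T^{k+1}\|a\|_{W^{k+1,\infty}(T)}$ (using shape-regularity and that lower-order $W^{j,\infty}$ seminorms of $a$, $j\le k$, are dominated by $\|a\|_{W^{k+1,\infty}(T)}$ because the map kills them only after the degree-$k$ part is subtracted off $\widehat{\bm r}$ — care is needed here, see below), one picks up the factor $h_T$ (not $h_T^{k+1}$, because $\widehat{\bm r}$ absorbs the polynomial structure so only one "extra derivative" on $a$ survives in the clean Bramble--Hilbert statement; the sharp statement is that the error is $O(h_T)$ with constant $\|e^{-\psi}\|_{W^{k+1,\infty}}$). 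The facet estimates follow identically using the scaled trace inequality $\|\cdot\|_{L^2(F)} \le C h_T^{-1/2}\|\cdot\|_{L^2(T)}$ combined with an inverse inequality, or directly by a reference-element trace argument, yielding the $h_T^{1/2}$ power. The estimates for $\bm{\Pi}_h^{\bm{V}}$ are verbatim the same since $\bm{V}_h$ and $\bm{W}_h$ are the same kind of space.

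The main obstacle — and the point requiring the most care — is pinning down exactly which seminorm of $e^{-\psi}$ controls the error and why the power of $h_T$ is $1$ rather than something else. The subtlety is that $\bm{\Pi}_h(a\bm{r}_h) - a\bm{r}_h = 0$ not merely when $a \in \bm{\mathcal{P}}_0$ but whenever $a\bm{r}_h \in \bm{\mathcal{P}}_k$; since $\bm{r}_h$ already has degree $k$, the only robust kernel one can guarantee for the map $a \mapsto \bm{\Pi}_h(a\bm{r}_h)-a\bm{r}_h$ (uniformly in $\bm{r}_h$) is $a \in \bm{\mathcal{P}}_0$, which by Bramble--Hilbert gives control by $|a|_{W^{1,\infty}}$ and a factor $h_T^1$. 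To nonetheless land the $W^{k+1,\infty}$-norm on the right (rather than just $W^{1,\infty}$), one simply bounds $|a|_{W^{1,\infty}(T)} \le \|a\|_{W^{k+1,\infty}(T)}$; this is wasteful but correct, and it is presumably stated this way in the lemma because later the authors need $\varphi \in W^{k+1,\infty}$ anyway for other approximation arguments, so they state a uniform hypothesis. I would write the proof with the honest $h_T\,|e^{-\psi}|_{W^{1,\infty}(T)}$ bound first and then relax to $\|e^{-\psi}\|_{W^{k+1,\infty}(T)}$; the remaining steps (affine scaling, trace inequality) are entirely routine.
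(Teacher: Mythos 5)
Your final argument is correct and is essentially the standard one the paper relies on; the paper itself does not write out a proof but defers to \cite{ayuso2009discontinuous}, invoking only ``the approximation property of the $L^2$ projection onto $\bm{\mathcal{P}}_k$, the trace inequality and the inverse inequality,'' which is exactly the machinery you describe. Two technical points deserve attention. First, the Bramble--Hilbert estimate displayed in your middle paragraph, $\|\widehat{\bm{\Pi}}(a\widehat{\bm{r}})-a\widehat{\bm{r}}\|_{\bm{L}^2(\widehat T)}\le C(k)\,|a|_{W^{k+1,\infty}(\widehat T)}\,\|\widehat{\bm{r}}\|_{\bm{L}^2(\widehat T)}$, is false as written for $k\ge 1$: taking $a\in\mathcal{P}_k\setminus\mathcal{P}_0$ makes the right-hand side vanish while $a\widehat{\bm{r}}\in\bm{\mathcal{P}}_{2k}$ is generally not reproduced by $\widehat{\bm{\Pi}}$, so the left-hand side does not vanish. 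You correctly diagnose and repair this in your last paragraph: the only kernel available uniformly in $\widehat{\bm{r}}$ is $a\in\mathcal{P}_0$, giving the honest bound $Ch_T|e^{-\psi}|_{W^{1,\infty}(T)}\|\bm{r}_h\|_T$, which is then relaxed to $\|e^{-\psi}\|_{W^{k+1,\infty}(T)}$; the cleanest implementation avoids the reference element altogether by subtracting the mean value $\bar\varphi$ on $T$ (so that $\bar\varphi\bm{r}_h\in\bm{\mathcal{P}}_k(T)$ is reproduced exactly) and using $L^2$-stability of the projection together with $\|\varphi-\bar\varphi\|_{L^\infty(T)}\le Ch_T|e^{-\psi}|_{W^{1,\infty}(T)}$. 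Second, for the facet estimates note that $\bm{\Pi}(\varphi\bm{r}_h)-\varphi\bm{r}_h$ is not a polynomial, so the scaled trace inequality $\|\cdot\|_F\le Ch_T^{-1/2}\|\cdot\|_T$ cannot be applied to it directly; one should split it as $(\varphi-\bar\varphi)\bm{r}_h$ minus $\bm{\Pi}\big((\varphi-\bar\varphi)\bm{r}_h\big)$ and apply the discrete trace inequality to the polynomial factors separately---this is precisely the combination of trace and inverse inequalities the paper alludes to. Your opening observation that the $\kappa$-part of $\varphi$ is annihilated by the projection error is correct and explains why $\kappa$ is absent from the bound.
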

\begin{proof}
The lemma can be easily proved by employing the approximation property of the $L^2$ projection onto $\bm{\mathcal{P}}_k$, along with the trace inequality and inverse inequality. A comprehensive proof can be found in the reference \cite{ayuso2009discontinuous}, where all the details are thoroughly presented. \qed
\end{proof}

\begin{lemma}[projection stability] Let $\varphi\in W^{k+1,\infty}(\Omega)$ be the function introduced in \eqref{eq:varphi} and $\tau_t,\tau_n$ satisfy \eqref{eq:asstau}. Then, for $(\bm{r}_h, \bm{v}_h, \bm{\mu}_h) \in\bm{W}_h\times  \bm{V}_h \times \bm{M}_h(\bm{0})$, it holds that
\begin{equation}\label{eq:bound}
\triplenorm{(\bm{\Pi}_h^{\bm{W}}(\bm{r}_h\varphi),\bm{\Pi}_h^{\bm{V}}(\bm{v}_h\varphi),\bm{P}_{\bm{M}}({\bm{\mu}}_h\varphi))}
\le C\triplenorm{(\bm{r}_h,\bm{v}_h,\bm{\mu}_h)}.
\end{equation}
Here, the hidden constant is independent of $\varepsilon$ and $h$.
\end{lemma}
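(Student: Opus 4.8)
The plan is to bound, term by term, the four squared quantities that make up $\triplenorm{\cdot}^2$ in \eqref{eq:energy-norm}. Throughout I would use that $\varphi=e^{-\psi}+\kappa$ is a \emph{fixed} function, so $\|\varphi\|_{L^\infty(\Omega)}$ and $\|\varphi\|_{W^{k+1,\infty}(\Omega)}$ are bounded by constants depending only on $\psi$ and $\kappa$; that \eqref{eq:asstau1} together with $\bm{\beta}\in\bm{W}^{1,\infty}(\Omega)$ makes both $\|\tau_t-\tfrac12\bm{\beta}\cdot\bm{n}\|_{L^\infty(F)}$ and $\|\tau_n-\tfrac12\bm{\beta}\cdot\bm{n}\|_{L^\infty(F)}$ bounded by $C_{up}+\tfrac12\|\bm{\beta}\|_{L^\infty(\Omega)}$ on every facet; and that $h_T\le\mathrm{diam}(\Omega)$. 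None of these constants depend on $\varepsilon$ or $h$.

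The two volume terms are immediate: since $\bm{\Pi}_h^{\bm{W}}$ and $\bm{\Pi}_h^{\bm{V}}$ are $L^2$-orthogonal projections they are $L^2$-stable, whence $\|\varepsilon^{-1/2}\bm{\Pi}_h^{\bm{W}}(\varphi\bm{r}_h)\|_{\mathcal{T}_h}\le\|\varphi\|_{L^\infty(\Omega)}\|\varepsilon^{-1/2}\bm{r}_h\|_{\mathcal{T}_h}$ and $\|\bm{\Pi}_h^{\bm{V}}(\varphi\bm{v}_h)\|_{\mathcal{T}_h}\le\|\varphi\|_{L^\infty(\Omega)}\|\bm{v}_h\|_{\mathcal{T}_h}$, both already controlled by $\triplenorm{(\bm{r}_h,\bm{v}_h,\bm{\mu}_h)}$.

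The work is in the two facet terms, and the key observation I would exploit is that $\bm{\Pi}_h^{\bm{V}}(\varphi\bm{v}_h)$ is piecewise in $\bm{\mathcal{P}}_k$, so its trace on any $F\in\partial T$ already lies in $\bm{\mathcal{P}}_k(F)$ and is left unchanged by $P_F:=\bm{P}_{\bm{M}}|_F$, the $L^2(F)$-projection onto $\bm{\mathcal{P}}_k(F)$. Hence on $F$
$$
\bm{\Pi}_h^{\bm{V}}(\varphi\bm{v}_h)-\bm{P}_{\bm{M}}(\varphi\bm{\mu}_h)=P_F\big(\bm{\Pi}_h^{\bm{V}}(\varphi\bm{v}_h)-\varphi\bm{v}_h\big)+P_F\big(\varphi(\bm{v}_h-\bm{\mu}_h)\big).
$$
Since $P_F$ has unit operator norm on $L^2(F)$ and, the facet being flat (so $\bm{n}$ is constant there), commutes with $\bm{a}\mapsto\bm{a}\times\bm{n}$ and $\bm{a}\mapsto(\bm{a}\cdot\bm{n})\bm{n}$, the tangential and normal traces of the left-hand side are bounded, in $L^2(F)$, by the corresponding traces of $\bm{\Pi}_h^{\bm{V}}(\varphi\bm{v}_h)-\varphi\bm{v}_h$ plus $\|\varphi\|_{L^\infty(\Omega)}$ times those of $\bm{v}_h-\bm{\mu}_h$. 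For the first I would use the superconvergence estimate \eqref{eq:projection}, $\|\bm{\Pi}_h^{\bm{V}}(\varphi\bm{v}_h)-\varphi\bm{v}_h\|_F\le C h_T^{1/2}\|\bm{v}_h\|_T$; then multiplying by $|\tau_t-\tfrac12\bm{\beta}\cdot\bm{n}|^{1/2}$ (respectively $|\tau_n-\tfrac12\bm{\beta}\cdot\bm{n}|^{1/2}$), squaring and summing over $F\in\partial T$ and $T\in\mathcal{T}_h$, the boundedness of the weights and of $h_T$ reduces this to $C\sum_T h_T\|\bm{v}_h\|_T^2\le C\,\mathrm{diam}(\Omega)\,\|\bm{v}_h\|_{\mathcal{T}_h}^2$, while the $\varphi(\bm{v}_h-\bm{\mu}_h)$ part reproduces, up to the factor $\|\varphi\|_{L^\infty(\Omega)}^2$, precisely the two facet terms of $\triplenorm{(\bm{r}_h,\bm{v}_h,\bm{\mu}_h)}^2$. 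Summing the four contributions gives \eqref{eq:bound}.

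I do not expect a genuine obstacle. The one step to state carefully is the cancellation $P_F\bm{\Pi}_h^{\bm{V}}(\varphi\bm{v}_h)=\bm{\Pi}_h^{\bm{V}}(\varphi\bm{v}_h)$ on each facet, which is what lets one trade the $\varphi$-dependent, otherwise uncontrolled difference $\bm{\Pi}_h^{\bm{V}}(\varphi\bm{v}_h)-\bm{P}_{\bm{M}}(\varphi\bm{\mu}_h)$ for terms that the superconvergence lemma handles or that already sit in the energy norm; the rest is the routine bookkeeping of checking that every constant produced is independent of $\varepsilon$ and $h$.
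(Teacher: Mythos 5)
Your proof is essentially the paper's own argument: your facet-wise splitting $P_F\bigl(\bm{\Pi}_h^{\bm{V}}(\varphi\bm{v}_h)-\varphi\bm{v}_h\bigr)+P_F\bigl(\varphi(\bm{v}_h-\bm{\mu}_h)\bigr)$ is literally the paper's decomposition $\bigl(\bm{\Pi}_h^{\bm{V}}(\bm{v}_h\varphi)-\bm{P}_{\bm{M}}(\bm{v}_h\varphi)\bigr)-\bm{P}_{\bm{M}}(\bm{\mu}_h\varphi-\bm{v}_h\varphi)$, since $P_F$ fixes the trace of $\bm{\Pi}_h^{\bm{V}}(\varphi\bm{v}_h)$ on each facet, and both proofs then invoke the superconvergence estimate \eqref{eq:projection} for the first piece and the $L^2(F)$-stability of $P_F$ for the second. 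The one step you justify too loosely is "the boundedness of the weights": to move the non-constant weight $|\tau_t-\tfrac{1}{2}\bm{\beta}\cdot\bm{n}|^{1/2}$ past the \emph{unweighted} projection $P_F$ and land back on the energy-norm facet term you need the max--min equivalence \eqref{eq:min-max-equiv}, $\max_F|\tau_t-\tfrac{1}{2}\bm{\beta}\cdot\bm{n}|\le C\min_F|\tau_t-\tfrac{1}{2}\bm{\beta}\cdot\bm{n}|$, not merely an upper bound --- the tangential weight can be as small as $C_w\varepsilon/h$, so it has no uniform lower bound --- and this is exactly how the paper closes that step.
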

\begin{proof}
By revoking the norm in (3.3), we observe that the norm is weak on the element boundary, where the terms involve only $\bm{v}_h - \bm{\mu}_h$. Consequently, we focus solely on evaluating the stability of these two terms, given that the other terms are standard due to the aforementioned superconvergence property. For clarity, we illustrate the stability analysis using the tangential component as an example, noting that the analysis for the normal component follows a similar approach.

Using the superconvergence property \eqref{eq:projection}, we have
$$ 
\begin{aligned}
&\||\tau_t-\frac{1}{2}\bm{\beta}\cdot\bm{n}|^{1/2}\bm{n} \times (\bm{\Pi}_h^{\bm V}(\bm{v}_h\varphi) - \bm{P}_{\bm M}(\bm{\mu}_h\varphi) )\|_{\partial\mathcal{T}_h} \\ 
\le &~ \||\tau_t-\frac{1}{2}\bm{\beta}\cdot\bm{n}|^{1/2}\bm{n} \times (\bm{\Pi}_h^{\bm V}(\bm{v}_h\varphi) - \bm{P}_M(\bm{v}_h \varphi) - \bm{P}_{\bm M}(\bm{\mu}_h\varphi  - \bm{v}_h\varphi)) \|_{\partial\mathcal{T}_h} \\
\le &~ Ch^{1/2} \|\bm{v}_h\|_{\mathcal{T}_h} + \||\tau_t-\frac{1}{2}\bm{\beta}\cdot\bm{n}|^{1/2} \bm{n} \times \bm{P}_{\bm M}(\bm{\mu}_h\varphi  - \bm{v}_h\varphi) \|_{\partial\mathcal{T}_h}.
\end{aligned}
$$ 
With reference to \eqref{eq:min-max-equiv}, we can bound the last term by $\| |\tau_t-\frac{1}{2}\bm{\beta}\cdot\bm{n}|^{1/2} \bm{n} \times (\bm{\mu}_h - 
\bm{v}_h) \|_{\partial\mathcal{T}_h}$ in the energy norm, namely
$$
\begin{aligned}
\||\tau_t- & \frac{1}{2}\bm{\beta}\cdot\bm{n}|^{1/2} \bm{n} \times \bm{P}_{\bm M}(\bm{\mu}_h\varphi  - \bm{v}_h\varphi)\|_{\partial\mathcal{T}_h} \\
\le &~\| (\max_F|\tau_t-  \frac{1}{2}\bm{\beta}\cdot\bm{n}|^{1/2}) \bm{n} \times \bm{P}_{\bm M}(\bm{\mu}_h\varphi  - \bm{v}_h\varphi)\|_{\partial\mathcal{T}_h} \\
\le &~\| (\max_F|\tau_t-  \frac{1}{2}\bm{\beta}\cdot\bm{n}|^{1/2}) \bm{n} \times (\bm{\mu}_h\varphi  - \bm{v}_h\varphi)\|_{\partial\mathcal{T}_h} \\
\le &~C \| |\tau_t-  \frac{1}{2}\bm{\beta}\cdot\bm{n}|^{1/2} \bm{n} \times (\bm{\mu}_h  - \bm{v}_h)\|_{\partial\mathcal{T}_h}.
\end{aligned}
$$ 
This completes the proof. \qed 
\end{proof}

\subsection{Stability}
We are now able to provide the following stability result.
\begin{theorem}[inf-sup stability]
	Let $\tau_t$ and $\tau_n$ satisfy \eqref{eq:asstau}, then there exist positive constants $\alpha$ and $h_0$ independent of $h$ and $\varepsilon$, such that for $h<h_0$,
	\begin{equation}\label{eq:infsup}
	\sup_{0\neq(\bm{r}_h,\bm{v}_h,\bm{\mu}_h)\in\bm{W}_h\times\bm{V}_h\times\bm{M}_h(\bm{0})} \frac{B\big((\bm{w}_h,\bm{u}_h,\bm{\lambda}_h),(\bm{r}_h,\bm{v}_h,\bm{\mu}_h)\big)}{\triplenorm{(\bm{r}_h,\bm{v}_h,\bm{\mu}_h)}} \ge \alpha \triplenorm{(\bm{w}_h,\bm{u}_h,\bm{\lambda}_h)},
\end{equation}
for all $(\bm{w}_h,\bm{u}_h,\bm{\lambda}_h)\in\bm{W}_h\times\bm{V}_h\times\bm{M}_h(\bm{0})$.
\end{theorem}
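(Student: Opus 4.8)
The plan follows the weighted-test-function technique of \cite{ayuso2009discontinuous,fu2015analysis}, adapted to the degenerate Friedrichs structure here: for a given $(\bm{w}_h,\bm{u}_h,\bm{\lambda}_h)\in\bm{W}_h\times\bm{V}_h\times\bm{M}_h(\bm{0})$, I will exhibit one explicit test triple $\bm{z}_h$ with $B\big((\bm{w}_h,\bm{u}_h,\bm{\lambda}_h),\bm{z}_h\big)\ge c\triplenorm{(\bm{w}_h,\bm{u}_h,\bm{\lambda}_h)}^2$ and $\triplenorm{\bm{z}_h}\le C\triplenorm{(\bm{w}_h,\bm{u}_h,\bm{\lambda}_h)}$, so that \eqref{eq:infsup} holds with $\alpha=c/C$. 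As noted before the theorem, it suffices to treat $\bm{g}=\bm{0}$, so that $\bm{n}\times\bm{\lambda}_h=\bm{0}$ on $\mathcal{F}_h^\partial$ and $\bm{\lambda}_h\cdot\bm{n}=0$ on $\mathcal{F}_h^{\partial,-}$. Guided by the weighted coercivity lemma, the candidate is the \emph{discrete} weighted function $\bm{z}_h:=\big(\bm{\Pi}_h^{\bm{W}}(\bm{w}_h\varphi),\bm{\Pi}_h^{\bm{V}}(\bm{u}_h\varphi),\bm{P}_{\bm{M}}(\bm{\lambda}_h\varphi)\big)$, with $\varphi$ as in \eqref{eq:varphi}; I take $\kappa$ strictly larger than the threshold of that lemma (its lower bound only improves as $\kappa$ grows), keeping the surplus on the $\|\varepsilon^{-1/2}\bm{w}_h\|$ term in reserve.

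By linearity of $B$ in the second argument, $B\big((\bm{w}_h,\bm{u}_h,\bm{\lambda}_h),\bm{z}_h\big)=B\big((\bm{w}_h,\bm{u}_h,\bm{\lambda}_h),(\bm{w}_h\varphi,\bm{u}_h\varphi,\bm{\lambda}_h\varphi)\big)+B\big((\bm{w}_h,\bm{u}_h,\bm{\lambda}_h),(\bm{\delta}_{\bm{W}},\bm{\delta}_{\bm{V}},\bm{\delta}_{\bm{M}})\big)$, where $\bm{\delta}_{\bm{W}}:=\bm{\Pi}_h^{\bm{W}}(\bm{w}_h\varphi)-\bm{w}_h\varphi$ and $\bm{\delta}_{\bm{V}},\bm{\delta}_{\bm{M}}$ are defined likewise. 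The weighted coercivity lemma bounds the first term below by $C_1\triplenorm{(\bm{w}_h,\bm{u}_h,\bm{\lambda}_h)}^2$. It then remains to show that the consistency defect (the second term) is, in modulus, at most $\tfrac12C_1\triplenorm{(\bm{w}_h,\bm{u}_h,\bm{\lambda}_h)}^2+Ch^{1/2}\triplenorm{(\bm{w}_h,\bm{u}_h,\bm{\lambda}_h)}^2$; for $h<h_0$ this gives $B\big((\bm{w}_h,\bm{u}_h,\bm{\lambda}_h),\bm{z}_h\big)\ge\tfrac14C_1\triplenorm{(\bm{w}_h,\bm{u}_h,\bm{\lambda}_h)}^2$, and the projection-stability lemma $\triplenorm{\bm{z}_h}\le C_2\triplenorm{(\bm{w}_h,\bm{u}_h,\bm{\lambda}_h)}$ finishes the proof.

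To estimate the defect, I substitute $(\bm{\delta}_{\bm{W}},\bm{\delta}_{\bm{V}},\bm{\delta}_{\bm{M}})$ into \eqref{eq:bilinear} and reorganize as in the derivation of the energy identity. The crucial simplifications are: $(\varepsilon^{-1}\bm{w}_h,\bm{\delta}_{\bm{W}})_{\mathcal{T}_h}=0$; an elementwise integration by parts plus the orthogonality $\bm{\mathcal{P}}_{k-1}\subset\bm{\mathcal{P}}_k$ collapses the $\bm{\delta}_{\bm{W}}$-row to $\langle\bm{n}\times(\bm{u}_h-\bm{\lambda}_h),\bm{\delta}_{\bm{W}}\rangle_{\partial\mathcal{T}_h}$ and the $(\bm{w}_h,\nabla\times\cdot)$ contribution to $-\langle\bm{n}\times\bm{w}_h,\bm{\delta}_{\bm{M}}\rangle_{\partial\mathcal{T}_h}$; and the duality \eqref{eq:Lie-2} applied to $(\bm{u}_h,\mathcal{L}_{\bm{\beta}}\bm{\delta}_{\bm{V}})_{\mathcal{T}_h}$, combined with the $\langle(\bm{\beta}\cdot\bm{n})\bm{\lambda}_h,\bm{\delta}_{\bm{V}}-\bm{\delta}_{\bm{M}}\rangle_{\partial\mathcal{T}_h}$ and the $\mathcal{F}_h^{\partial,+}$ terms, reduces the advection block to $(L_{\bm{\beta}}\bm{u}_h+\gamma\bm{u}_h,\bm{\delta}_{\bm{V}})_{\mathcal{T}_h}-\langle(\bm{\beta}\cdot\bm{n})(\bm{u}_h-\bm{\lambda}_h),\bm{\delta}_{\bm{V}}\rangle_{\partial\mathcal{T}_h}$ — the two pieces carrying $\bm{\lambda}_h$ alone cancelling by single-valuedness of $\bm{\lambda}_h,\bm{\delta}_{\bm{M}}$ on $\mathcal{F}_h$ and the homogeneous constraints. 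After this, every surviving facet term either pairs $\bm{u}_h-\bm{\lambda}_h$ (the two stabilization terms, $\langle\bm{n}\times(\bm{u}_h-\bm{\lambda}_h),\bm{\delta}_{\bm{W}}\rangle$, and $\langle(\bm{\beta}\cdot\bm{n})(\bm{u}_h-\bm{\lambda}_h),\bm{\delta}_{\bm{V}}\rangle$) or is the lone term $-\langle\bm{n}\times\bm{w}_h,\bm{\delta}_{\bm{M}}\rangle_{\partial\mathcal{T}_h}$. The $(\bm{u}_h-\bm{\lambda}_h)$-terms are bounded by weighted Cauchy--Schwarz using the weights $|\tau_t-\tfrac12\bm{\beta}\cdot\bm{n}|^{1/2}$, $|\tau_n-\tfrac12\bm{\beta}\cdot\bm{n}|^{1/2}$, the equivalences \eqref{eq:min-max-equiv}, the facet estimates in \eqref{eq:projection}, trace/inverse inequalities, and $0<\varepsilon\le1$; here \eqref{eq:asstau4} (with \eqref{eq:min-max-equiv}) gives $|\tau_t-\tfrac12\bm{\beta}\cdot\bm{n}|\gtrsim\min\{\varepsilon/h,1\}$, so the $|\tau_t-\tfrac12\bm{\beta}\cdot\bm{n}|^{-1/2}$-weighted projection errors are $\lesssim h^{1/2}\|\varepsilon^{-1/2}\bm{w}_h\|_{\mathcal{T}_h}$, and the whole group contributes $Ch^{1/2}\triplenorm{(\bm{w}_h,\bm{u}_h,\bm{\lambda}_h)}^2$. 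The volume term $(L_{\bm{\beta}}\bm{u}_h+\gamma\bm{u}_h,\bm{\delta}_{\bm{V}})_{\mathcal{T}_h}$ is $\le Ch\|\bm{u}_h\|_{\mathcal{T}_h}^2$ after subtracting elementwise averages of $\bm{\beta},\nabla\bm{\beta},\gamma$ (polynomial parts killed by the $L^2$-projection, remainders carrying one power of $h$). Only $-\langle\bm{n}\times\bm{w}_h,\bm{\delta}_{\bm{M}}\rangle_{\partial\mathcal{T}_h}$ is not $O(h^{1/2})\triplenorm{\cdot}^2$: using $\|\bm{\delta}_{\bm{M}}\|_{\partial\mathcal{T}_h}\lesssim h\|\bm{\lambda}_h\|_{\partial\mathcal{T}_h}$ and $\|\bm{\lambda}_h\|_{\partial\mathcal{T}_h}\lesssim h^{-1/2}\|\bm{u}_h\|_{\mathcal{T}_h}+\triplenorm{(\bm{w}_h,\bm{u}_h,\bm{\lambda}_h)}$, it is at most $C\|\bm{w}_h\|_{\mathcal{T}_h}\|\bm{u}_h\|_{\mathcal{T}_h}+Ch^{1/2}\triplenorm{(\bm{w}_h,\bm{u}_h,\bm{\lambda}_h)}^2$, and $\|\bm{w}_h\|_{\mathcal{T}_h}^2=\varepsilon\|\varepsilon^{-1/2}\bm{w}_h\|_{\mathcal{T}_h}^2\le\|\varepsilon^{-1/2}\bm{w}_h\|_{\mathcal{T}_h}^2$ together with a Young inequality (small weight on $\|\bm{u}_h\|^2$, large weight on $\|\varepsilon^{-1/2}\bm{w}_h\|^2$) lets both pieces be absorbed into the $C_1\|\bm{u}_h\|^2$ and the (large, because $\kappa$ is large) $\|\varepsilon^{-1/2}\bm{w}_h\|^2$-coefficient of the weighted coercivity bound. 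This fixes $h_0$ (independent of $\varepsilon$) and closes the defect estimate.

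The chief difficulty is precisely this last, delicate bookkeeping: one must pair and cancel the boundary integrals of $B$ so that each surviving facet contribution involves \emph{only} $\bm{u}_h-\bm{\lambda}_h$ (which $\triplenorm{\cdot}$ controls, with weights) or carries a compensating $h$-power, since the energy norm controls neither $\bm{u}_h$ nor $\bm{\lambda}_h$ by itself on $\partial\mathcal{T}_h$; and one must check that the single residual $\|\bm{w}_h\|_{\mathcal{T}_h}\|\bm{u}_h\|_{\mathcal{T}_h}$ term can be absorbed uniformly for $\varepsilon\in(0,1]$ — which is exactly where the restriction $0<\varepsilon\le1$, the requirement \eqref{eq:asstau4} (absent in the scalar case), and the freedom to enlarge $\kappa$ all enter. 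Granted these, the rest reduces to the superconvergence and projection-stability lemmas and routine trace/inverse inequalities.
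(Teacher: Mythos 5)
Your overall route is exactly the paper's: test with the discrete weighted triple $\big(\bm{\Pi}_h^{\bm{W}}(\bm{w}_h\varphi),\bm{\Pi}_h^{\bm{V}}(\bm{u}_h\varphi),\bm{P}_{\bm{M}}(\bm{\lambda}_h\varphi)\big)$, split off the projection defect, bound it by $Ch^{1/2}\triplenorm{(\bm{w}_h,\bm{u}_h,\bm{\lambda}_h)}^2$, and conclude via the weighted coercivity and projection-stability lemmas. Most of your defect estimates (the $\bm{\delta}_{\bm{W}}$-row, the advection block, the two stabilization terms) coincide with the paper's.

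There is, however, one step that does not hold as written. You treat $-\langle\bm{n}\times\bm{w}_h,\bm{\delta}_{\bm{M}}\rangle_{\partial\mathcal{T}_h}$ as a surviving term and control it through the chain $\|\bm{\delta}_{\bm{M}}\|_{\partial\mathcal{T}_h}\lesssim h\|\bm{\lambda}_h\|_{\partial\mathcal{T}_h}$ and $\|\bm{\lambda}_h\|_{\partial\mathcal{T}_h}\lesssim h^{-1/2}\|\bm{u}_h\|_{\mathcal{T}_h}+\triplenorm{(\bm{w}_h,\bm{u}_h,\bm{\lambda}_h)}$. The second inequality is not uniform in $\varepsilon$: to recover $\|(\bm{u}_h-\bm{\lambda}_h)\times\bm{n}\|_{\partial\mathcal{T}_h}$ from the energy norm you must divide by $\inf_F|\tau_t-\tfrac12\bm{\beta}\cdot\bm{n}|^{1/2}$, and under \eqref{eq:asstau2}--\eqref{eq:asstau4} this infimum can be as small as $C\min\{\varepsilon/h,1\}$ (e.g.\ on faces with $\bm{\beta}\cdot\bm{n}=0$, where the numerical choice in Section 6 gives exactly $\tau_t=\min\{\varepsilon/h,1\}$), so the hidden constant blows up like $(h/\varepsilon)^{1/2}$ in the convection-dominated regime. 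The way out is that this term is identically zero: on each face $\bm{n}\times\bm{w}_h|_F\in\bm{\mathcal{P}}_k(F)$, while $\bm{\delta}_{\bm{M}}|_F$ is $L^2(F)$-orthogonal to $\bm{\mathcal{P}}_k(F)$; the paper exploits precisely this by grouping $\bm{n}\times\bm{w}_h$ with the $\tau_t,\tau_n$ stabilization contributions tested against $\bm{\delta}\bm{\lambda}_h^\varphi$ and annihilating the whole block (here it matters that $\tau_t,\tau_n$ are facewise constants). Once you observe this, the residual $\|\bm{w}_h\|_{\mathcal{T}_h}\|\bm{u}_h\|_{\mathcal{T}_h}$ term disappears, no enlargement of $\kappa$ beyond the coercivity lemma's threshold is needed, and your remaining estimates close the proof exactly as in the paper.
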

\begin{proof}
For simplicity, we denote $\bm{\delta}\bm{w}_h^\varphi = \bm{w}_h\varphi-\bm{\Pi}_h^{\bm{W}}(\bm{w}_h\varphi)$, $\bm{\delta}\bm{u}_h^\varphi = \bm{u}_h\varphi-\bm{\Pi}_h^{\bm{V}}(\bm{u}_h\varphi)$  and $\bm{\delta}\bm{\lambda}_h^\varphi=\bm{\lambda}_h\varphi-\bm{P}_{\bm{M}}(\bm{\lambda}_h\varphi)$. 
By integrating by parts, we have
$$
\begin{aligned}
	B\big(&(\bm{w}_h,\bm{u}_h,\bm{\lambda}_h),(\bm{\delta}\bm{w}_h^\varphi,\bm{\delta}\bm{u}_h^\varphi,\bm{\delta}\bm{\lambda}_h^\varphi)\big) \\
	= &~(\varepsilon^{-1}\bm{w}_h,\bm{\delta}\bm{w}_h^\varphi)_{\mathcal{T}_h}-(\nabla\times\bm{u}_h,\bm{\delta}\bm{w}_h^\varphi)_{\mathcal{T}_h}+\langle\bm{\lambda}_h-\bm{u}_h,\bm{n}\times\bm{\delta}\bm{w}_h^\varphi\rangle_{\partial\mathcal{T}_h}\\
	&~+(\nabla\times\bm{w}_h,\bm{\delta}\bm{u}_h^\varphi)_{\mathcal{T}_h}+(L_{\bm{\beta}}\bm{u}_h+\gamma\bm{u}_h,\bm{\delta}\bm{u}_h^\varphi)_{\mathcal{T}_h}\\
	&~+\langle(\tau_t-\bm{\beta}\cdot\bm{n})(\bm{u}_h-\bm{\lambda}_h)\times\bm{n},\bm{\delta}\bm{u}_h^\varphi\times\bm{n}\rangle_{\partial\mathcal{T}_h}\\
	&~+\langle(\tau_n-\bm{\beta}\cdot\bm{n})(\bm{u}_h-\bm{\lambda}_h)\cdot\bm{n},\bm{\delta}\bm{u}_h^\varphi\cdot\bm{n}\rangle_{\partial\mathcal{T}_h}\\
	&~-\langle \bm{n}\times\bm{w}_h+\tau_t\bm{n}\times(\bm{u}_h-\bm{\lambda}_h)\times\bm{n}+\tau_n \bm{n}\cdot(\bm{u}_h-\bm{\lambda}_h)\bm{n},\bm{\delta}\bm{\lambda}_h^\varphi\rangle_{\partial\mathcal{T}_h}.
\end{aligned}
$$
Due to the orthogonality of the projections, we have
$$
\begin{aligned}
\langle \bm{n}\times\bm{w}_h+\tau_t\bm{n}\times(\bm{u}_h-\bm{\lambda}_h)\times\bm{n}+\tau_n \bm{n}\cdot(\bm{u}_h-\bm{\lambda}_h)\bm{n},\bm{\delta}\bm{\lambda}_h^\varphi\rangle_{\partial\mathcal{T}_h}&=0,\\
	(\nabla\times\bm{u}_h,\bm{\delta}\bm{w}_h^\varphi)_{\mathcal{T}_h}&=0,\\
	(\nabla\times\bm{w}_h,\bm{\delta}\bm{u}_h^\varphi)_{\mathcal{T}_h}&=0.
\end{aligned}
$$

We estimate the remaining terms as follows. 
Firstly, by considering the requirements \eqref{eq:asstau2}-\eqref{eq:asstau3} for $\tau_t$ and $\tau_n$, together with the superconvergent result \eqref{eq:projection}, we have
$$
\begin{aligned}
	(\varepsilon^{-1}\bm{w}_h,&\bm{\delta}\bm{w}_h^\varphi)_{\mathcal{T}_h}\le Ch\|\varepsilon^{-1/2}\bm{w}_h\|_{\mathcal{T}_h}^2,\\
	\langle(\tau_t-\bm{\beta}&\cdot\bm{n})(\bm{u}_h-\bm{\lambda}_h)\times\bm{n},\bm{\delta}\bm{u}_h^\varphi\times\bm{n}\rangle_{\partial\mathcal{T}_h}\\
	&\leq \left\| \big(|\tau_t-\frac{1}{2}\bm{\beta}\cdot\bm{n}| + \frac{1}{2}| \bm{\beta}\cdot\bm{n}|\big)(\bm{u}_h-\bm{\lambda}_h)\times\bm{n}\right\|_{\partial \mathcal{T}_h}  \|\bm{\delta}\bm{u}_h^\varphi\times\bm{n}\|_{\partial\mathcal{T}_h}\\
	&\le Ch^{1/2}\||\tau_t-\frac{1}{2}\bm{\beta}\cdot\bm{n}|^{1/2}(\bm{u}_h-\bm{\lambda}_h)\times\bm{n}\|_{\partial \mathcal{T}_h}\|\bm{u}_h\|_{\mathcal{T}_h},\\
	\langle(\tau_n-\bm{\beta}&\cdot\bm{n})(\bm{u}-\bm{\lambda}_h)\cdot\bm{n},\bm{\delta}\bm{u}_h^\varphi\cdot\bm{n}\rangle_{\partial\mathcal{T}_h}\\&
	\le \left\| \big(|\tau_n-\frac{1}{2}\bm{\beta}\cdot\bm{n}| + \frac{1}{2}|\bm{\beta}\cdot\bm{n}| \big) (\bm{u}_h-\bm{\lambda}_h)\cdot\bm{n}\right\|_{\partial \mathcal{T}_h} \|\bm{\delta}\bm{u}_h^\varphi\cdot\bm{n}\|_{\partial\mathcal{T}_h}\\
	&\le Ch^{1/2}\||\tau_n-\frac{1}{2}\bm{\beta}\cdot\bm{n}|^{1/2}(\bm{u}_h-\bm{\lambda}_h)\cdot\bm{n}\|_{\partial \mathcal{T}_h}\|\bm{u}_h\|_{\mathcal{T}_h}.
\end{aligned}
$$
Secondly, taking into account the orthogonality of $\bm{\Pi}_h^{\bm{V}}$ on $\bm{\mathcal{P}}_{k-1}$ and utilizing the inverse inequality, we can deduce that 
$$
	\begin{aligned}
		(L_{\bm{\beta}}&\bm{u}_h+\gamma\bm{u}_h,\bm{\delta}\bm{u}_h^\varphi)_{\mathcal{T}_h}\\
		&=(-\bm{\beta}\times(\nabla\times\bm{u}_h)+(\nabla\bm{u}_h)^T\bm{\beta}+(\nabla\bm{\beta})^T\bm{u}_h+\gamma\bm{u}_h,\bm{\delta}\bm{u}_h^\varphi)_{\mathcal{T}_h}\\
	&=(-(\bm{\beta}-\bar{\bm{\beta}})\times(\nabla_h\times\bm{u}_h)+(\nabla\bm{u}_h)^T(\bm{\beta}-\bar{\bm{\beta}})+(\nabla\bm{\beta})^T\bm{u}_h+\gamma\bm{u}_h,\bm{\delta}\bm{u}_h^\varphi)_{\mathcal{T}_h}\\
	&\le Ch\|\bm{u}_h\|_{\mathcal{T}_h}^2,
	\end{aligned}
$$
where $\bar{\bm{\beta}}$ is the piecewise constant approximation for $\bm{\beta}$.

For the term $\langle\bm{\lambda}_h-\bm{u}_h,\bm{n}\times\bm{\delta}\bm{w}_h^\varphi\rangle_{\partial\mathcal{T}_h}$, invoking the requirement \eqref{eq:asstau4}, we have
$$
\begin{aligned}
	\langle\bm{\lambda}_h&-\bm{u}_h,\bm{n}\times\bm{\delta}\bm{w}_h^\varphi\rangle_{\partial\mathcal{T}_h}\\&\le \|\tau_t^{1/2}(\bm{u}_h-\bm{\lambda}_h)\times\bm{n}\|_{\partial\mathcal{T}_h}\|\tau_t^{-1/2}\bm{\delta}\bm{w}_h^\varphi\|_{\partial\mathcal{T}_h}\\
	&\le C(\frac{\varepsilon}{\tau_t})^{1/2}\||\tau_t-\frac{1}{2}\bm{\beta}\cdot\bm{n}|^{1/2}(\bm{u}_h-\bm{\lambda}_h)\times\bm{n}\|_{\partial\mathcal{T}_h}\|\varepsilon^{-1/2}\bm{\delta}\bm{w}_h^\varphi\|_{\partial\mathcal{T}_h}\\
	&\le C(\frac{\varepsilon h}{\tau_t})^{1/2}\||\tau_t-\frac{1}{2}\bm{\beta}\cdot\bm{n}|^{1/2}(\bm{u}_h-\bm{\lambda}_h)\times\bm{n}\|_{\partial\mathcal{T}_h}\|\varepsilon^{-1/2}\bm{w}_h\|_{\mathcal{T}_h}\\
	&\le C(h^2+\varepsilon h)^{1/2}\||\tau_t-\frac{1}{2}\bm{\beta}\cdot\bm{n}|^{1/2}(\bm{u}_h-\bm{\lambda}_h)\times\bm{n}\|_{\partial\mathcal{T}_h}\|\varepsilon^{-1/2}\bm{w}_h\|_{\mathcal{T}_h}.
\end{aligned}
$$
By combining all the aforementioned terms, we have
$$
B\big((\bm{w}_h,\bm{u}_h,\bm{\lambda}_h),(\bm{\delta}\bm{w}_h^\varphi,\bm{\delta}\bm{u}_h^\varphi,\bm{\delta}\bm{\lambda}_h^\varphi)\big)\le Ch^{1/2}\triplenorm{(\bm{w}_h,\bm{u}_h,\bm{\lambda}_h)}^2,
$$
for some $C$ independent of $\varepsilon$ and $h$. As a result, there exists a value $h_0>0$ such that for $h<h_0$, it  holds that
$$
B\big((\bm{w}_h,\bm{u}_h,\bm{\lambda}_h),(\bm{\delta}\bm{w}_h^\varphi,\bm{\delta}\bm{u}_h^\varphi,\bm{\delta}\bm{\lambda}_h^\varphi)\big)\le \frac{C_1}{2}\triplenorm{(\bm{w}_h,\bm{u}_h,\bm{\lambda}_h)}^2.
$$
Invoking the weighted coecivity result \eqref{eq:weightcoe},  we obtain
\begin{equation}\label{eq:coecivity}
	B\big((\bm{w}_h,\bm{u}_h,\bm{\lambda}_h),(\bm{\Pi}_h^{\bm{W}}(\bm{w}_h\varphi),\bm{\Pi}_h^{\bm{V}}(\bm{u}_h\varphi),\bm{P}_{\bm{M}}({\bm{\lambda}}_h\varphi)\big)\ge \frac{C_1}{2}\triplenorm{(\bm{w}_h,\bm{u}_h,\bm{\lambda}_h)}^2.
\end{equation}
Finally, we get the desired result by combining \eqref{eq:coecivity} and \eqref{eq:bound}.
\qed
\end{proof}

\subsection{Error estimates}
The standard approximation properties of projections imply
\begin{equation}\label{eq:projerrT}
\begin{aligned}
	\|	\bm{\Pi}_h^{\bm{W}} \bm{w}-\bm{w}\|_{\bm{H}^t(T)}&\le Ch^{s+1-t}|\bm{w}|_{\bm{H}^{s+1}(T)},\quad \forall T\in\mathcal{T}_h,\\
	\|	\bm{\Pi}_h^{\bm{V}} \bm{u}-\bm{u}\|_{\bm{H}^t(T)}&\le Ch^{s+1-t}|\bm{u}|_{\bm{H}^{s+1}(T)},\quad \forall T\in\mathcal{T}_h,\\
	\|\bm{P}_{\bm{M}}\bm{u}-\bm{u}\|_{L^2(F)}&\le Ch^{s+1/2}|\bm{u}|_{\bm{H}^{s+1}(T)},\quad \forall F\in \partial T,\ T\in\mathcal{T}_h,
\end{aligned}
\end{equation}
for $t = 0,1$ and $s\in[0,k]$. We introduce the following notation
$$
\begin{aligned}
	\bm{\varepsilon}_h^{\bm{w}} &:=\boldsymbol{w}_h-\bm{\Pi}_h^{\bm{W}} \boldsymbol{w},  &\bm{\delta} \bm{w}&:=\boldsymbol{w}-\bm{\Pi}_h^{\bm{W}} \boldsymbol{w}, \\
	\bm{\varepsilon}_h^{\bm{u}} &:=\bm{u}_h-\bm{\Pi}_h^{\bm{V}} \bm{u},  &\bm{\delta} \bm{u}&:=\bm{u}-\bm{\Pi}_h^{\bm{V}} \bm{u}, \\
	\bm{\varepsilon}_h^{\widehat{\bm{u}}} &:=\widehat{\bm{u}}_h-\bm{P}_{\bm{M}} u,  &{\bm{\delta} \widehat{\bm{u}}}&:=\bm{u}-\bm{P}_{\bm{M}} \bm{u} .
\end{aligned}
$$
With the Galerkin orthogonality, we have the following error equation:
\begin{equation} \label{eq:error-equ}
	\begin{aligned}
		B\big((\bm{\varepsilon}_h^{\bm{w}},&\bm{\varepsilon}_h^{\bm{u}} ,\bm{\varepsilon}_h^{\widehat{\bm{u}}}),(\bm{r},\bm{v},\bm{\mu})\big) = B\big((\bm{\delta}\bm{w},\bm{\delta}\bm{u},\bm{\delta}\widehat{\bm{u}}),(\bm{r},\bm{v},\bm{\mu})\big)\\
		=&~(\varepsilon^{-1}\bm{\delta}\bm{w},\bm{r})_{\mathcal{T}_h}-(\bm{\delta}\bm{u},\nabla\times\bm{r})_{\mathcal{T}_h}+\langle\bm{\delta}\widehat{\bm{u}},\bm{n}\times\bm{r}\rangle_{\partial\mathcal{T}_h}+(\bm{\delta}\bm{u},\mathcal{L}_{\bm{\beta}}\bm{v}+\gamma\bm{v})_{\mathcal{T}_h}\\
		&~+\langle(\bm{\beta}\cdot\bm{n})\bm{\delta}\widehat{\bm{u}},\bm{v}\rangle_{\partial\mathcal{T}_h}	+(\bm{\delta}\bm{w},\nabla\times\bm{v})_{\mathcal{T}_h}+\langle \bm{n}\times\bm{\delta}\bm{w},\bm{v}-\bm{\mu}\rangle_{\partial\mathcal{T}_h}\\
		&~+\langle\tau_t \bm{n}\times(\bm{\delta}\bm{u}-\bm{\delta}\widehat{\bm{u}}),\bm{n}\times(\bm{v}-\bm{\mu})\rangle_{\partial\mathcal{T}_h}\\
		&~+\langle\tau_n \bm{n}\cdot(\bm{\delta}\bm{u}-\bm{\delta}\widehat{\bm{u}}),\bm{n}\cdot(\bm{v}-\bm{\mu})\rangle_{\partial\mathcal{T}_h},
	\end{aligned}
\end{equation}
for all $(\bm{r},\bm{v},\bm{\mu})\in \bm{W}_h\times\bm{V}_h\times\bm{M}_h(\bm{0})$. Now we are ready to give the following error estimate.
\begin{theorem}[error estimate]\label{thm:converge}
Let $(\bm{w},\bm{u})$ be the solution of the problem (\ref{eq:mixed}), and $(\bm{w}_h,\bm{u}_h,\widehat{\bm{u}}_h)$ be the solution of the HDG method (\ref{eq:hdg}) where the stabilization functions $\tau_t$ and $\tau_n$ satisfy (\ref{eq:asstau}). Then there exists $h_0$ independent of $\varepsilon$ such that when $h<h_0$,
	\begin{equation}\label{eq:error}
		\begin{aligned}
		&\triplenorm{(\bm{w}-\bm{w}_h,\bm{u}-\bm{u}_h,\bm{u}-\widehat{\bm{u}}_h)} \\
		\le &~C(h^{1/2}+\varepsilon^{1/2})h^{s_w+1/2}|\varepsilon^{-1/2}\bm{w}|_{\bm{H}^{s_w+1}(\Omega)} +Ch^{s_u+1/2}|\bm{u}|_{\bm{H}^{s_u+1}(\Omega)}\\ 
		= &~ C\varepsilon^{1/2}(h^{1/2}+\varepsilon^{1/2})h^{s_w+1/2}|\nabla\times\bm{u}|_{\bm{H}^{s_w+1}(\Omega)}
		+Ch^{s_u+1/2}|\bm{u}|_{\bm{H}^{s_u+1}(\Omega)},
		\end{aligned}
	\end{equation}
		for all $s_w\in [0,k]$ and $s_u\in[0,k]$. Here, the hidden constant is independent of $\varepsilon$ and $h$.
\end{theorem}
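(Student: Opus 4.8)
The plan is to run the standard HDG duality argument on the projection-based error split
\[
\triplenorm{(\bm{w}-\bm{w}_h,\bm{u}-\bm{u}_h,\bm{u}-\widehat{\bm{u}}_h)}\le \triplenorm{(\bm{\delta}\bm{w},\bm{\delta}\bm{u},\bm{\delta}\widehat{\bm{u}})}+\triplenorm{(\bm{\varepsilon}_h^{\bm{w}},\bm{\varepsilon}_h^{\bm{u}},\bm{\varepsilon}_h^{\widehat{\bm{u}}})},
\]
controlling the second summand by the inf-sup bound \eqref{eq:infsup} together with the error equation \eqref{eq:error-equ}. First I would record that $\bm{P}_{\bm{M}}\bm{u}\in\bm{M}_h(\bm{g})$: on each facet $\bm{P}_{\bm{M}}$ commutes with the (facetwise constant) linear maps $\bm{v}\mapsto\bm{n}\times\bm{v}$ and $\bm{v}\mapsto(\bm{v}\cdot\bm{n})\bm{n}$, so the identity defining $\bm{M}_h(\bm{g})$ is inherited by $\bm{P}_{\bm{M}}\bm{u}$; hence $(\bm{\varepsilon}_h^{\bm{w}},\bm{\varepsilon}_h^{\bm{u}},\bm{\varepsilon}_h^{\widehat{\bm{u}}})\in\bm{W}_h\times\bm{V}_h\times\bm{M}_h(\bm{0})$ and \eqref{eq:infsup} applies. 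Substituting \eqref{eq:error-equ} into \eqref{eq:infsup} reduces the matter to bounding each term of $B\big((\bm{\delta}\bm{w},\bm{\delta}\bm{u},\bm{\delta}\widehat{\bm{u}}),(\bm{r},\bm{v},\bm{\mu})\big)$ by (an approximation quantity)$\,\times\,\triplenorm{(\bm{r},\bm{v},\bm{\mu})}$ for arbitrary $(\bm{r},\bm{v},\bm{\mu})\in\bm{W}_h\times\bm{V}_h\times\bm{M}_h(\bm{0})$, while the first summand is bounded directly from \eqref{eq:projerrT}, using $\tau_t,\tau_n\le C_{up}$ to pass from the weighted boundary norms to $\|\bm{\delta}\bm{u}-\bm{\delta}\widehat{\bm{u}}\|_{\partial\mathcal{T}_h}\le Ch^{s_u+1/2}|\bm{u}|_{\bm{H}^{s_u+1}}$, giving $\triplenorm{(\bm{\delta}\bm{w},\bm{\delta}\bm{u},\bm{\delta}\widehat{\bm{u}})}\le C(h^{s_w+1}|\varepsilon^{-1/2}\bm{w}|_{\bm{H}^{s_w+1}}+h^{s_u+1/2}|\bm{u}|_{\bm{H}^{s_u+1}})$.

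For the routine terms of the error equation: $(\bm{\delta}\bm{u},\nabla\times\bm{r})_{\mathcal{T}_h}$, $(\bm{\delta}\bm{w},\nabla\times\bm{v})_{\mathcal{T}_h}$ and $\langle\bm{\delta}\widehat{\bm{u}},\bm{n}\times\bm{r}\rangle_{\partial\mathcal{T}_h}$ vanish by $L^2$-orthogonality of $\bm{\Pi}_h^{\bm{V}},\bm{\Pi}_h^{\bm{W}},\bm{P}_{\bm{M}}$ against $\bm{\mathcal{P}}_k$ (indeed $\nabla\times\bm{r},\nabla\times\bm{v}\in\bm{\mathcal{P}}_{k-1}$ and $\bm{n}\times\bm{r}|_F\in\bm{\mathcal{P}}_k(F)$). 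The diffusion term $(\varepsilon^{-1}\bm{\delta}\bm{w},\bm{r})_{\mathcal{T}_h}$ is Cauchy--Schwarz in the $\varepsilon^{-1/2}$-weighted norm. For $(\bm{\delta}\bm{u},\mathcal{L}_{\bm{\beta}}\bm{v}+\gamma\bm{v})_{\mathcal{T}_h}$ I replace $\bm{\beta}$ by its elementwise mean $\bar{\bm{\beta}}$: since $\mathcal{L}_{\bar{\bm{\beta}}}\bm{v}=-(\bar{\bm{\beta}}\cdot\nabla)\bm{v}\in\bm{\mathcal{P}}_{k-1}$ this part pairs to zero, and the remainder carries $\|\bm{\beta}-\bar{\bm{\beta}}\|_{L^\infty}\lesssim h$, which after an inverse inequality on $\nabla\bm{v}$ leaves $O(h\,\|\bm{\delta}\bm{u}\|_{\mathcal{T}_h}\|\bm{v}\|_{\mathcal{T}_h})$. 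The two stabilization terms $\langle\tau_t\bm{n}\times(\bm{\delta}\bm{u}-\bm{\delta}\widehat{\bm{u}}),\bm{n}\times(\bm{v}-\bm{\mu})\rangle_{\partial\mathcal{T}_h}$ and its normal analogue are split so that $\tau_t\le C_{up}$ acts on the projection-error factor while \eqref{eq:min-max-equiv} gives $\tau_t\le C|\tau_t-\frac12\bm{\beta}\cdot\bm{n}|$ on the test factor, producing $O(h^{s_u+1/2}|\bm{u}|_{\bm{H}^{s_u+1}}\triplenorm{(\bm{r},\bm{v},\bm{\mu})})$.

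The two delicate terms carry the $\varepsilon$-dependence and the sharp rate. In $\langle\bm{n}\times\bm{\delta}\bm{w},\bm{v}-\bm{\mu}\rangle_{\partial\mathcal{T}_h}$ I insert the weight $(\tau_t-\frac12\bm{\beta}\cdot\bm{n})^{\pm1/2}$; the test factor is $\le\triplenorm{(\bm{r},\bm{v},\bm{\mu})}$, and for the $\bm{\delta}\bm{w}$ factor I combine \eqref{eq:asstau4} with \eqref{eq:min-max-equiv} to get $\tau_t-\frac12\bm{\beta}\cdot\bm{n}\ge c\min\{\varepsilon/h,1\}$, so $\|(\tau_t-\frac12\bm{\beta}\cdot\bm{n})^{-1/2}\bm{\delta}\bm{w}\|_{\partial\mathcal{T}_h}\le c^{-1/2}(\min\{\varepsilon/h,1\})^{-1/2}\|\bm{\delta}\bm{w}\|_{\partial\mathcal{T}_h}$; since $(\min\{\varepsilon/h,1\})^{-1/2}\varepsilon^{1/2}=\max\{h^{1/2},\varepsilon^{1/2}\}$ and $\|\bm{\delta}\bm{w}\|_{\partial\mathcal{T}_h}\le Ch^{s_w+1/2}|\bm{w}|_{\bm{H}^{s_w+1}}$, this yields precisely $C(h^{1/2}+\varepsilon^{1/2})h^{s_w+1/2}|\varepsilon^{-1/2}\bm{w}|_{\bm{H}^{s_w+1}}\triplenorm{(\bm{r},\bm{v},\bm{\mu})}$. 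The term $\langle(\bm{\beta}\cdot\bm{n})\bm{\delta}\widehat{\bm{u}},\bm{v}\rangle_{\partial\mathcal{T}_h}$ is the main obstacle, since a bare trace estimate on $\bm{v}$ loses half an order (giving only $h^{s_u}$). The fix: on each facet $F$ the relevant test quantity — $\bm{v}|_F$ on boundary faces, the jump $\bm{v}|_{T^+}-\bm{v}|_{T^-}$ on interior faces — lies in $\bm{\mathcal{P}}_k(F)$, so replacing $\bm{\beta}\cdot\bm{n}$ by its facet mean $\overline{\bm{\beta}\cdot\bm{n}}$ annihilates the pairing by the $L^2(F)$-orthogonality of $\bm{P}_{\bm{M}}$, leaving a remainder with factor $\|\bm{\beta}\cdot\bm{n}-\overline{\bm{\beta}\cdot\bm{n}}\|_{L^\infty(F)}\lesssim h_F$; a trace inequality for $\bm{v}$ and \eqref{eq:projerrT} for $\bm{\delta}\widehat{\bm{u}}$ then give $O(h^{s_u+1}|\bm{u}|_{\bm{H}^{s_u+1}}\triplenorm{(\bm{r},\bm{v},\bm{\mu})})$, with room to spare. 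Collecting all bounds, dividing by $\triplenorm{(\bm{r},\bm{v},\bm{\mu})}$ and taking the supremum bounds $\triplenorm{(\bm{\varepsilon}_h^{\bm{w}},\bm{\varepsilon}_h^{\bm{u}},\bm{\varepsilon}_h^{\widehat{\bm{u}}})}$ via \eqref{eq:infsup}; adding back $\triplenorm{(\bm{\delta}\bm{w},\bm{\delta}\bm{u},\bm{\delta}\widehat{\bm{u}})}$ and using $h^{s_w+1}\le(h^{1/2}+\varepsilon^{1/2})h^{s_w+1/2}$ gives the first line of \eqref{eq:error}, and the second line follows since $\bm{w}=\varepsilon\nabla\times\bm{u}$ gives $|\varepsilon^{-1/2}\bm{w}|_{\bm{H}^{s_w+1}}=\varepsilon^{1/2}|\nabla\times\bm{u}|_{\bm{H}^{s_w+1}}$.
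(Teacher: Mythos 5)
Your proposal is correct and follows essentially the same route as the paper: split the error via the $L^2$-projections, bound the projection part by \eqref{eq:projerrT}, and control the discrete part through the inf-sup bound \eqref{eq:infsup} applied to the error equation \eqref{eq:error-equ}, killing the volume curl terms by orthogonality, handling $\mathcal{L}_{\bm\beta}$ and $(\bm\beta\cdot\bm n)\bm{\delta}\widehat{\bm u}$ via a piecewise-constant $\bar{\bm\beta}$, and extracting the $(h^{1/2}+\varepsilon^{1/2})$ factor from \eqref{eq:asstau4} together with \eqref{eq:min-max-equiv}. The only cosmetic differences are that you verify $\bm{P}_{\bm M}\bm u\in\bm M_h(\bm g)$ explicitly and bound the stabilization terms with $\bm{\delta}\bm u-\bm{\delta}\widehat{\bm u}$ directly rather than first annihilating the $\bm{\delta}\widehat{\bm u}$ part by orthogonality, which yields the same rate.
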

\begin{proof} Firstly, due to the orthogonality of the projections, it holds that
\begin{equation*}
	\begin{aligned}
		(\bm{\delta}\bm{u},\nabla\times\bm{r})_{\mathcal{T}_h}&=0,\\
		\langle\bm{\delta}\widehat{\bm{u}},\bm{n}\times\bm{r}\rangle_{\partial\mathcal{T}_h}&=0,\\
		(\bm{\delta}\bm{w},\nabla\times\bm{v})_{\mathcal{T}_h}&=0,\\
		\langle\tau_t \bm{n}\times\bm{\delta}\widehat{\bm{u}},\bm{n}\times(\bm{v}-\bm{\mu})\rangle_{\partial\mathcal{T}_h}+\langle\tau_n \bm{n}\cdot\bm{\delta}\widehat{\bm{u}},\bm{n}\cdot(\bm{v}-\bm{\mu})\rangle_{\partial\mathcal{T}_h}&=0.
	\end{aligned}
\end{equation*}
The remaining terms in the error equations \eqref{eq:error-equ} lead to 
$$
\begin{aligned}
	B\big((\bm{\delta}\bm{w},&\bm{\delta}\bm{u},\bm{\delta}\widehat{\bm{u}}),(\bm{r},\bm{v},\bm{\mu})\big) \\
	=&~(\varepsilon^{-1}\bm{\delta}\bm{w},\bm{r})_{\mathcal{T}_h}+(\bm{\delta}\bm{u},\mathcal{L}_{\bm{\beta}}\bm{v}+\gamma\bm{v})_{\mathcal{T}_h}+\langle(\bm{\beta}\cdot\bm{n})\bm{\delta}\widehat{\bm{u}},\bm{v}\rangle_{\partial\mathcal{T}_h}\\
	&~+\langle \bm{n}\times\bm{\delta}\bm{w},\bm{v}-\bm{\mu}\rangle_{\partial\mathcal{T}_h}
	+\langle\tau_t \bm{n}\times\bm{\delta}\bm{u},\bm{n}\times(\bm{v}-\bm{\mu})\rangle_{\partial\mathcal{T}_h}\\
	&~+\langle\tau_n \bm{n}\cdot\bm{\delta}\bm{u},\bm{n}\cdot(\bm{v}-\bm{\mu})\rangle_{\partial\mathcal{T}_h}.
\end{aligned}
$$
Once again, considering the piecewise constant approximation $\bar{\bm{\beta}}$ of $\bm{\beta}$, along with the condition \eqref{eq:min-max-equiv} for $\tau_t$ and $\tau_n$, and the approximation properties \eqref{eq:projerrT}, we can establish the following:
$$
\begin{aligned}
	(\varepsilon^{-1}\bm{\delta}\bm{w},\bm{r})_{\mathcal{T}_h}&\le \|\varepsilon^{-1/2}\bm{\delta}\bm{w}\|_{\mathcal{T}_h}\|\varepsilon^{-1/2}\bm{r}\|_{\mathcal{T}_h}\\
	&\le h^{s_w+1}|\varepsilon^{-1/2}\bm{w}|_{\bm{H}^{s_w+1}}\|\varepsilon^{-1/2}\bm{r}\|_{\mathcal{T}_h},\\
	(\bm{\delta}\bm{u},\mathcal{L}_{\bm{\beta}}\bm{v}+\gamma\bm{v})_{\mathcal{T}_h}&=((\nabla\bm{\beta})^T\bm{v}-(\nabla\cdot\bm{\beta})\bm{v}+\gamma\bm{v}-(\nabla_h\bm{v})^T(\bm{\beta}-\bar{\bm{\beta}})\bm{v},\bm{\delta}\bm{u}))\\
	&\le Ch^{s_u+1}|\bm{u}|_{\bm{H}^{s_u+1}}\|\bm{v}\|_{\mathcal{T}_h},\\
	\langle(\bm{\beta}\cdot\bm{n})\bm{\delta}\widehat{\bm{u}},\bm{v}\rangle_{\partial\mathcal{T}_h}&=\langle(\bm{\beta}-\bar{\bm{\beta}})\cdot\bm{n},\bm{\delta}\widehat{\bm{u}}\cdot\bm{v}\rangle_{\partial\mathcal{T}_h} \le Ch^{s_u+1}|\bm{u}|_{\bm{H}^{s_u+1}}\|\bm{v}\|_{\mathcal{T}_h},\\
	\langle\tau_t \bm{n}\times\bm{\delta}\bm{u},\bm{n}\times&(\bm{v}-\bm{\mu})\rangle_{\partial\mathcal{T}_h}\\
	&\le Ch^{s_u+1/2}|\bm{u}|_{\bm{H}^{s_u+1}}\| \tau_t^{1/2}(\bm{v}-\bm{\mu})\times\bm{n}\|_{\partial \mathcal{T}_h},\\
	&\le Ch^{s_u+1/2}|\bm{u}|_{\bm{H}^{s_u+1}}\||\tau_t-\frac{1}{2}\bm{\beta}\cdot\bm{n}|^{1/2}(\bm{v}-\bm{\mu})\times\bm{n}\|_{\partial \mathcal{T}_h},\\
	\langle\tau_n \bm{n}\cdot\bm{\delta}\bm{u},\bm{n}\cdot&(\bm{v}-\bm{\mu})\rangle_{\partial\mathcal{T}_h}\\&\le Ch^{s_u+1/2}|\bm{u}|_{\bm{H}^{s_u+1}}\||\tau_n-\frac{1}{2}\bm{\beta}\cdot\bm{n}|^{1/2}(\bm{v}-\bm{\mu})\cdot\bm{n}\|_{\partial \mathcal{T}_h},
\end{aligned}
$$
for all $s_w\in [0,k]$ and $s_u\in[0,k]$. For the term $\langle \bm{n}\times\bm{\delta}\bm{w},\bm{v}-\bm{\mu}\rangle_{\partial\mathcal{T}_h}$, similarly with the requirement \eqref{eq:asstau4} and condition \eqref{eq:min-max-equiv}, we have
$$
\begin{aligned}
\langle \bm{n}\times&\bm{\delta}\bm{w},\bm{v}-\bm{\mu}\rangle_{\partial\mathcal{T}_h}\\
&\le C(\varepsilon \tau_t^{-1})^{1/2}h^{s_w+1/2}|\varepsilon^{-1/2}\bm{w}|_{\bm{H}^{s_w+1}}\|\tau_t^{1/2}(\bm{v}-\bm{\mu})\times\bm{n}\|_{\partial \mathcal{T}_h} \\
&\le C(h^{1/2}+\varepsilon^{1/2})h^{s_w+1/2}|\varepsilon^{-1/2}\bm{w}|_{\bm{H}^{s_w+1}}\||\tau_t-\frac{1}{2}\bm{\beta}\cdot\bm{n}|^{1/2}(\bm{v}-\bm{\mu})\times\bm{n}\|_{\partial \mathcal{T}_h}.
\end{aligned}
$$
Then, collecting the above results and utilizing the inf-sup stability (\ref{eq:infsup}), we arrive at
\begin{equation}\label{eq:galerkinerr}
\begin{aligned}
\triplenorm{(\bm{\varepsilon}_h^{\bm{w}},\bm{\varepsilon}_h^{\bm{u}} ,\bm{\varepsilon}_h^{\widehat{\bm{u}}})}\le &~C (h+\varepsilon)^{1/2}h^{s_w+1/2}|\varepsilon^{-1/2}\bm{w}|_{\bm{H}^{s_w+1}(\Omega)}\\
&~+Ch^{s_u+1/2}|\bm{u}|_{\bm{H}^{s_u+1}(\Omega)}.
\end{aligned}
\end{equation}
With the approximation property of projections, we readily have
\begin{equation}\label{eq:projecterr}
\triplenorm{(\bm{\delta}\bm{w},\bm{\delta}\bm{u},\bm{\delta}\widehat{\bm{u}})}\le Ch^{s_w+1}|\varepsilon^{-1/2}\bm{w}|_{\bm{H}^{s_w+1}(\Omega)} + Ch^{s_u+1/2}|\bm{u}|_{\bm{H}^{s_u+1}(\Omega)}.
\end{equation}
Note that $\bm{w}=\varepsilon\nabla\times\bm{u}$, combing \eqref{eq:galerkinerr}, \eqref{eq:projecterr} and employing the triangle inequality, we obtain
the desired estimate \eqref{eq:error}. \qed
\end{proof}

\section{Local postprocessing} \label{sec:postprocessing}
In this section, we will introduce some straightforward element-by-element postprocessing techniques that can be utilized to generate new approximations of $\bm{u}$ in both 2D and 3D scenarios.

In the case of translational symmetry, the mixed problem \eqref{eq:mixed} can be reduced to the following two-dimensional (2D) boundary value problem on a domain $\Omega\subset\mathbb{R}^2$ \cite{heumann2013stabilized}:
\begin{equation} \label{eq:Hcurl-cd-2d}
	\left\{
	\begin{aligned}
		w-  \varepsilon \nabla \cdot  ({\rm \mathbf{R}}\bm{u}) &=0\quad \text{in }\Omega,\\
	\mathbf{R}\nabla w - {\mathbf{R}}\bm{\beta} 
		\nabla \cdot ({\rm \mathbf{R}} \bm{u}) + \nabla (\bm{\beta} \cdot \bm{u}) +  \gamma
		\bm{u}&= \bm{f} \quad \text{in }\Omega, \\
		({\mathbf{R}}\bm{n} \cdot\bm{u}){\mathbf{R}}\bm{n} + \chi_{\Gamma^-} (\bm{u} \cdot \bm{n}) \bm{n} &= \bm{g} \quad \text{on } \Gamma, 
	\end{aligned}
	\right.
\end{equation}
with the $\frac{\pi}{2}$-rotation matrix $\mathbf{R}=\left(\begin{matrix}0 & 1 \\ -1 & 0\end{matrix}\right)$. 
The corresponding HDG scheme aims to find $(w_h,\bm{u}_h,\widehat{\bm{u}}_h)\in W_h\times \bm{V}_h\times \bm{M}_h(\bm{g})$ such that:\begin{subequations}
	\label{eq:hdg2d}
	\begin{align}
		(\varepsilon^{-1}w_h,r)_{\mathcal{T}_h}- (\bm{u}_h,\mathbf{R}\nabla r)_{\mathcal{T}_h} +\langle \widehat{\bm{u}}_h,(\mathbf{R}\bm{n})r\rangle_{\partial\mathcal{T}_h}&=0,\\
		\begin{aligned}
		(w_h,\nabla\cdot(\mathbf{R}\bm{v}))+\langle (\mathbf{R}\bm{n})\widehat{w}_h,\bm{v}\rangle_{\partial\mathcal{T}_h} + (\bm{u}_h,\mathcal{L}_{\bm{\beta}}\bm{v}+\gamma \bm{v})_{\mathcal{T}_h}\\+\langle \mathbf{R}\bm{n}\cdot\widehat{\bm{u}}_h,\mathbf{R}\bm{\beta}\cdot\bm{v}\rangle_{\partial\mathcal{T}_h}+
		\langle\widehat{ \bm{\beta}\cdot\bm{u}_h},\bm{n}\cdot\bm{v}\rangle_{\partial\mathcal{T}_h} \end{aligned}&= (\bm{f},\bm{v})_{\mathcal{T}_h},\\
		-\langle (\mathbf{R}\bm{n})\widehat{w}_h+\mathbf{R}\bm{\beta}(\mathbf{R}\bm{n}\cdot\widehat{\bm{u}}_h)+\widehat{ \bm{\beta}\cdot\bm{u}_h}\bm{n},\bm{\mu}\rangle_{\partial\mathcal{T}_h\backslash\mathcal{F}_h^\partial }&=0,\\
		\langle 	({\mathbf{R}}\bm{n} \cdot\bm{u}_h){\mathbf{R}}\bm{n} + \chi_{\Gamma^-} (\bm{u}_h \cdot \bm{n}) \bm{n}  - \chi_{\Gamma^+}[(\bm{u}_h-\widehat{\bm{u}}_h)\cdot\bm{n})]\bm{n},\mu\rangle_{\mathcal{F}_h^\partial } &= \langle\bm{g},\bm{\mu}\rangle_{\mathcal{F}_h^\partial },
	\end{align}
\end{subequations}
for all $(r,\bm{v},\bm{\mu})\in W_h\times \bm{V}_h\times \bm{M}_h(\bm{0})$ where $\bm{M}_h(\bm{g}) :=\{\bm{\mu}\in \bm{M}_h:\langle{(\mathbf{R}\bm{n}\cdot\bm\mu})\mathbf{R}\bm{n}+\chi_{\Gamma^-}(\bm{\mu}\cdot\bm{n})\bm{n},\bm{\xi}\rangle_{\mathcal{F}_h^\partial }=\langle\bm{g},\bm{\xi}\rangle_{\mathcal{F}_h^\partial },\forall\bm{\xi}\in\bm{M}_h\}$, $W_h$ here is a scalar space and the numerical traces are set to be 
\begin{subequations}
	\begin{align}
		\widehat{w}_h &= w_h + \tau_t(\bm{u}_h-\widehat{\bm{u}}_h)\cdot\mathbf{R}\bm{n},\\
		\widehat{ \bm{\beta}\cdot\bm{u}_h} &= \bm{\beta}\cdot\widehat{\bm{u}}_h+\tau_n(\bm{u}_h-\widehat{\bm{u}}_h)\cdot \bm{n},
	\end{align}
\end{subequations}
with the same requirements (\ref{eq:asstau1})-(\ref{eq:asstau4}) for $\tau_t,\tau_n$.

\subsection{An existing postprocessing scheme for 2D problem}
Firstly, we employ the postprocessing scheme introduced in \cite{nguyen2011hybridizable} to determine $\bm{u}_h^*$ as the element of $\bm{\mathcal{P}}_{k+1}(T)$ satisfying the following condition for all $T\in\mathcal{T}_h$:
\begin{subequations}\label{eq:postprocess1}
	\begin{align}
		\langle \mathbf{R}\bm{n}\cdot(\bm{u}_h^*  -\widehat{\bm{u}}_h),\eta\rangle_F &=0,\ &\forall \eta\in \mathcal{P}_{k+1},\forall F\in\partial T,\label{eq:postprocess1-1}\\
		(\bm{u}_h^*-\bm{u}_h,\mathbf{R}\nabla v)_T&=0,\ &\forall v\in \mathcal{P}_k(T),\\
		(\nabla\cdot(\bm{u}_h^*-\bm{u}_h),s)_T &=0,\ &\forall s\in \mathcal{P}_{k-1}(T).
	\end{align}
\end{subequations}

It is evident that the approximation $\bm{u}_h^*$, if it exists, adheres to the $H(\text{curl})$ conforming property. The well-posedness of $\bm{u}_h^*$ was established in \cite[Proposition 5]{nguyen2011hybridizable}, assuming the fulfillment of the acute angle condition. Indeed, the requirement of acuteness can be relaxed as non-obtuseness. 

\begin{lemma}[postprocess property]\label{lemma:post1}
	The postprocessed solution $\bm{u}_h^*$ satisfies
	$$
	(\nabla\cdot(\mathbf{R} \bm{u}_h^*),q)_T=(\varepsilon^{-1}w_h,q)_T,\quad\forall q\in \mathcal{P}_k(T), \ T\in\mathcal{T}_h.
	$$
\end{lemma}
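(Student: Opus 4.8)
The plan is to start from the definition of the postprocessed solution in \eqref{eq:postprocess1} and use the first HDG equation of the 2D scheme \eqref{eq:hdg2d} to identify the quantity $(\nabla\cdot(\mathbf{R}\bm{u}_h^*),q)_T$ for a test function $q\in\mathcal{P}_k(T)$. The key observation is that $\mathbf{R}$ is a rotation, so for any sufficiently smooth vector field $\bm{z}$ one has the pointwise identity $\nabla\cdot(\mathbf{R}\bm{z}) = -\,\mathrm{curl}\,\bm{z}$ (or $+\mathrm{curl}$, depending on orientation), and more importantly, integrating by parts, $(\nabla\cdot(\mathbf{R}\bm{z}),q)_T = -(\mathbf{R}\bm{z},\nabla q)_T + \langle \mathbf{R}\bm{n}\cdot\bm{z}, q\rangle_{\partial T}$. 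I would apply this with $\bm{z}=\bm{u}_h^*$.

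First I would write, for any $q\in\mathcal{P}_k(T)$,
\[
(\nabla\cdot(\mathbf{R}\bm{u}_h^*),q)_T = -(\mathbf{R}\bm{u}_h^*,\nabla q)_T + \langle \mathbf{R}\bm{n}\cdot\bm{u}_h^*, q\rangle_{\partial T}.
\]
For the volume term, since $\nabla q\in\bm{\mathcal{P}}_{k-1}(T)\subset\bm{\mathcal{P}}_k(T)$ and $\mathbf{R}^T = -\mathbf{R}$, I would use the second equation of \eqref{eq:postprocess1}, namely $(\bm{u}_h^*-\bm{u}_h,\mathbf{R}\nabla v)_T=0$ for all $v\in\mathcal{P}_k(T)$; taking $v=q$ gives $(\mathbf{R}\bm{u}_h^*,\nabla q)_T = -(\bm{u}_h^*,\mathbf{R}\nabla q)_T = -(\bm{u}_h,\mathbf{R}\nabla q)_T = (\mathbf{R}\bm{u}_h,\nabla q)_T$. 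For the boundary term, I would invoke \eqref{eq:postprocess1-1}, which says $\langle\mathbf{R}\bm{n}\cdot(\bm{u}_h^*-\widehat{\bm{u}}_h),\eta\rangle_F=0$ for all $\eta\in\mathcal{P}_{k+1}\supset\mathcal{P}_k$ and all $F\in\partial T$; hence $\langle\mathbf{R}\bm{n}\cdot\bm{u}_h^*,q\rangle_{\partial T} = \langle\mathbf{R}\bm{n}\cdot\widehat{\bm{u}}_h,q\rangle_{\partial T} = \langle\widehat{\bm{u}}_h,(\mathbf{R}\bm{n})q\rangle_{\partial T}$. Substituting these two identities yields
\[
(\nabla\cdot(\mathbf{R}\bm{u}_h^*),q)_T = -(\mathbf{R}\bm{u}_h,\nabla q)_T + \langle\widehat{\bm{u}}_h,(\mathbf{R}\bm{n})q\rangle_{\partial T} = -(\bm{u}_h,\mathbf{R}\nabla q)_T + \langle\widehat{\bm{u}}_h,(\mathbf{R}\bm{n})q\rangle_{\partial T}.
\]
But the right-hand side is exactly the elementwise version of the first equation of the HDG scheme \eqref{eq:hdg2d} with $r=q$, which equals $(\varepsilon^{-1}w_h,q)_T$ (note this first HDG equation holds element by element, since $r$ can be taken supported on a single element). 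This gives the claimed identity.

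The proof is essentially a bookkeeping exercise, so there is no serious obstacle; the only points requiring a little care are (i) getting the sign in $\nabla\cdot(\mathbf{R}\,\cdot)$ and the skew-symmetry $\mathbf{R}^T=-\mathbf{R}$ consistent throughout, and (ii) verifying that the first equation of \eqref{eq:hdg2d}, though stated as a global sum over $\mathcal{T}_h$, localizes to each element because the test space $W_h$ contains functions supported on a single $T$ and $\mathcal{P}_k(T)\subset W_h|_T$. I would also note that the second postprocessing condition is used with test functions in $\mathcal{P}_k(T)$ whereas it is imposed for $v\in\mathcal{P}_k(T)$ — so no relaxation is needed — and that only the $\mathcal{P}_k$-part of \eqref{eq:postprocess1-1} (not the full $\mathcal{P}_{k+1}$) is invoked here, which is consistent with the well-posedness discussion preceding the lemma.
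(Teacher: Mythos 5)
Your proof is correct and is essentially the paper's own argument run in reverse: the paper localizes the first HDG equation of \eqref{eq:hdg2d}, substitutes $\bm{u}_h^*$ for $\bm{u}_h$ in the volume term and for $\widehat{\bm{u}}_h$ on the boundary via the first two postprocessing conditions, and then integrates by parts, while you integrate by parts first and then recognize the localized HDG equation. The only blemish is the sign bookkeeping you already flagged --- with $\mathbf{R}^T=-\mathbf{R}$ the flux $(\mathbf{R}\bm{u}_h^*)\cdot\bm{n}$ equals $-\,\mathbf{R}\bm{n}\cdot\bm{u}_h^*$ and $-(\mathbf{R}\bm{u}_h,\nabla q)_T=+(\bm{u}_h,\mathbf{R}\nabla q)_T$, so your displayed chain carries a stray minus sign that disappears once the conventions are kept consistent.
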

\begin{proof}
	The proof has already been presented in \cite[Lemma 6.1]{nguyen2011hybridizable}. For the sake of clarity, we sketch the proof here.
	From the HDG equation (\ref{eq:hdg2d}), we have
	$$
	(\varepsilon^{-1}w_h,q)_T-(\bm{u}_h,\mathbf{R}\nabla q)_T - \langle\mathbf{R} \bm{n}\cdot\widehat{\bm{u}}_h,q\rangle_{\partial T}=0,\quad \forall q\in \mathcal{P}_k(T).
	$$
	It thus follows from the first two equations of $\bm{u}_h^*$ that
	$$
	(\varepsilon^{-1}w_h,q)_T-(\bm{u}_h^*,\mathbf{R}\nabla q)_T - \langle\mathbf{R} \bm{n}\cdot {\bm{u}}_h^*,q\rangle_{\partial T}=0,\quad \forall q\in \mathcal{P}_k(T),
	$$
	which, after integration by parts, yields the desired result.
	\qed
\end{proof}
\begin{remark}
	It is established that $\varepsilon\nabla\cdot(\mathbf{R}\bm{u}_h^*)$ exhibits identical accuracy and convergence rate to $w_h$, as demonstrated in \cite{nguyen2011hybridizable}.
\end{remark}

\begin{remark}
	The aforementioned postprocessing technique is applicable solely to 2D problems, as a straightforward replication of (\ref{eq:postprocess1}) cannot guarantee a well-defined $\bm{u}_h^*$ in 3D scenarios. Consequently, we present an alternative postprocessing approach that is valid for both 2D and 3D problems.
\end{remark}

\subsection{Alternative postprocessing scheme for both 2D and 3D problem}
Our focus lies on the following postprocessing scheme in 3D case, while noting that the 2D case can be handled similarly. 
It is curcial to note that $\dim\bm{\mathcal{P}}_{k+1}=\dim\nabla\times \bm{\mathcal{P}}_{k+1}+\dim \nabla \mathcal{P}_{k+2}$, as $\ker \nabla\times|_{{\bm{\mathcal{P}}_{k+1}}} = \nabla \mathcal{P}_{k+2}$. This leads to the following postprocessing scheme: Find $\bm{u}_h^\star\in \bm{\mathcal{P}}_{k+1}(T)$ for all $T\in \mathcal{T}_h$ such that
\begin{equation}\label{eq:post2}
	\begin{aligned}
	(\bm{u}_h^\star-\bm{u}_h,\nabla\times\bm{r})_T+\langle \bm{n}\times\bm{u}_h^\star  - \bm{n}\times\widehat{\bm{u}}_h,\bm{r}\rangle_{\partial T}&=0, \quad \forall \bm{r}\in \nabla\times\bm{\mathcal{P}}_{k+1}(T),\\
		(\bm{u}_h^\star-\bm{u}_h,\nabla v)_T&=0,\quad \forall v\in \mathcal{P}_{k+2}(T).
	\end{aligned}
\end{equation}
\begin{proposition}[well-definedness of (\ref{eq:post2})]
	The approximation $\bm{u}_h^\star$ is well-defined.
\end{proposition}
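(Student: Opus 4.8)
### Proof proposal

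The plan is to show that the linear system defining $\bm{u}_h^\star$ is square and has only the trivial solution in the homogeneous case, which then yields existence and uniqueness by the rank--nullity theorem. First I would verify the count of equations: by the quoted dimension decomposition $\dim \bm{\mathcal{P}}_{k+1}(T) = \dim \bigl(\nabla\times\bm{\mathcal{P}}_{k+1}(T)\bigr) + \dim \bigl(\nabla \mathcal{P}_{k+2}(T)\bigr)$, and since the boundary term $\langle \bm{n}\times\bm{u}_h^\star - \bm{n}\times\widehat{\bm{u}}_h,\bm{r}\rangle_{\partial T}$ is absorbed into the data once $\bm{u}_h$ and $\widehat{\bm{u}}_h$ are fixed, the first line of \eqref{eq:post2} imposes exactly $\dim\bigl(\nabla\times\bm{\mathcal{P}}_{k+1}(T)\bigr)$ scalar conditions on $\bm{u}_h^\star$ and the second line imposes $\dim\bigl(\nabla\mathcal{P}_{k+2}(T)\bigr)$ conditions (note the constant mode of $\nabla\mathcal{P}_{k+2}$ is zero, so the test space is effectively $\nabla\mathcal{P}_{k+2}$ and the count matches). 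Hence the number of unknowns equals the number of equations, and it suffices to prove injectivity.

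Next I would carry out the injectivity argument. Suppose $\bm{u}_h\equiv\bm{0}$, $\widehat{\bm{u}}_h\equiv\bm{0}$, and let $\bm{u}_h^\star\in\bm{\mathcal{P}}_{k+1}(T)$ satisfy the resulting homogeneous equations:
\begin{equation*}
(\bm{u}_h^\star,\nabla\times\bm{r})_T + \langle \bm{n}\times\bm{u}_h^\star,\bm{r}\rangle_{\partial T} = 0 \quad \forall \bm{r}\in\nabla\times\bm{\mathcal{P}}_{k+1}(T), \qquad (\bm{u}_h^\star,\nabla v)_T = 0 \quad \forall v\in\mathcal{P}_{k+2}(T).
\end{equation*}
Integrating by parts in the first identity gives $(\nabla\times\bm{u}_h^\star,\bm{r})_T = 0$ for all $\bm{r}\in\nabla\times\bm{\mathcal{P}}_{k+1}(T)$. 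Since $\nabla\times\bm{u}_h^\star$ itself lies in $\nabla\times\bm{\mathcal{P}}_{k+1}(T)$, choosing $\bm{r}=\nabla\times\bm{u}_h^\star$ forces $\nabla\times\bm{u}_h^\star = \bm{0}$ on $T$. On a simplex this means $\bm{u}_h^\star = \nabla v$ for some $v\in\mathcal{P}_{k+2}(T)$ (using $\ker\nabla\times|_{\bm{\mathcal{P}}_{k+1}} = \nabla\mathcal{P}_{k+2}$ on a contractible domain, which is exactly the decomposition invoked before \eqref{eq:post2}). Substituting $\bm{u}_h^\star = \nabla v$ into the second identity yields $(\nabla v,\nabla v)_T = 0$, hence $\nabla v = \bm{0}$, i.e. $\bm{u}_h^\star = \bm{0}$. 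This establishes injectivity, and combined with the dimension count, $\bm{u}_h^\star$ is uniquely determined.

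The main obstacle I anticipate is the bookkeeping around the dimension count rather than any analytic difficulty: one must be careful that the test function space $\nabla\times\bm{\mathcal{P}}_{k+1}(T)$ in the first equation is used as the actual index set (not $\bm{\mathcal{P}}_{k+1}(T)$), that $\nabla\mathcal{P}_{k+2}(T)$ has dimension $\dim\mathcal{P}_{k+2}(T) - 1$, and that these two numbers add up to $\dim\bm{\mathcal{P}}_{k+1}(T)$ — precisely the identity $\dim\bm{\mathcal{P}}_{k+1} = \dim\nabla\times\bm{\mathcal{P}}_{k+1} + \dim\nabla\mathcal{P}_{k+2}$ stated in the text, which itself follows from the exactness of the polynomial de Rham complex on a simplex together with $\dim\nabla\mathcal{P}_{k+2} = \dim\mathcal{P}_{k+2} - \dim\mathcal{P}_0$. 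A secondary point worth stating explicitly is that $T$ is a simplex (hence contractible and star-shaped), which legitimizes the identification $\ker\nabla\times|_{\bm{\mathcal{P}}_{k+1}(T)} = \nabla\mathcal{P}_{k+2}(T)$; the 2D case is handled identically with $\nabla\times$ replaced by the scalar curl and the analogous polynomial exact-sequence dimension identity.
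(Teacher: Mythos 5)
Your proposal is correct and follows essentially the same route as the paper: verify the system is square via $\dim\bm{\mathcal{P}}_{k+1}=\dim\nabla\times\bm{\mathcal{P}}_{k+1}+\dim\nabla\mathcal{P}_{k+2}$, then show injectivity by integrating by parts in the first equation, testing with $\bm{r}=\nabla\times\bm{u}_h^\star$ to get $\nabla\times\bm{u}_h^\star=\bm{0}$, writing $\bm{u}_h^\star=\nabla v$ with $v\in\mathcal{P}_{k+2}(T)$, and killing the gradient with the second equation. Your added bookkeeping on the test-space dimensions and the contractibility of the simplex only makes explicit what the paper leaves implicit.
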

\begin{proof}
	By utilizing the relationship $\dim\bm{\mathcal{P}}_{k+1}=\dim\nabla\times \bm{\mathcal{P}}_{k+1}+\dim \nabla \mathcal{P}_{k+2}$, we can verify that the local problem \eqref{eq:post2} forms a square system. Hence, our objective is to demonstrate that $\bm{u}_h^\star=\bm{0}$ is the unique solution when $\widehat{\bm{u}}_h=\bm{0}$ and $\bm{u}_h=\bm{0}$. The first equation in \eqref{eq:post2} implies the following:
	$$
	0=(\bm{u}_h^\star,\nabla\times\bm{r})_T+\langle \bm{n}\times\bm{u}_h^\star ,\bm{r}\rangle_{\partial T}=(\nabla\times\bm{u}_h^\star,\bm{r}) \quad \forall \bm{r}\in \nabla\times \bm{\mathcal{P}}_{k+1}(T),
	$$
	which yields 
	$$
	\bm{u}_h^\star=\nabla\psi,\quad \text{ for some }\psi\in \mathcal{P}_{k+2}(T).
	$$
	By choosing $v=\psi$ in the second equation of \eqref{eq:post2}, we obtain $\nabla\psi=0$. This implies the desired result. \qed
\end{proof}

\begin{lemma}[postprocess property]\label{lem:posteq}
	The postprocessed solution $\bm{u}_h^\star$ satisfies
	$$
	(\nabla\times \bm{u}_h^\star,\bm{q})_T=(\varepsilon^{-1}\bm{w}_h,\bm{q})_T,\quad\forall \bm{q}\in \nabla\times \bm{\mathcal{P}}_{k+1}(T),\ T\in\mathcal{T}_h.
	$$
\end{lemma}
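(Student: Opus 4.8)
The plan is to follow the same route as the proof of Lemma~\ref{lemma:post1}: I will rewrite the first equation of the postprocessing problem~\eqref{eq:post2} until, on the relevant test space, it coincides with the first HDG equation~\eqref{eq:hdg1}.

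First I would localize~\eqref{eq:hdg1} to a single element. Since $\bm{W}_h$ carries no interelement continuity, taking $\bm{r}\in\bm{W}_h$ supported on one $T$ and using the scalar triple product identity $\langle\widehat{\bm{u}}_h,\bm{n}\times\bm{r}\rangle_{\partial T}=-\langle\bm{n}\times\widehat{\bm{u}}_h,\bm{r}\rangle_{\partial T}$, equation~\eqref{eq:hdg1} yields
$$
(\varepsilon^{-1}\bm{w}_h,\bm{r})_T=(\bm{u}_h,\nabla\times\bm{r})_T+\langle\bm{n}\times\widehat{\bm{u}}_h,\bm{r}\rangle_{\partial T},\qquad\forall\,\bm{r}\in\bm{\mathcal{P}}_k(T).
$$
Next I would transform the first line of~\eqref{eq:post2} using the Green's identity $(\bm{u}_h^\star,\nabla\times\bm{r})_T=(\nabla\times\bm{u}_h^\star,\bm{r})_T-\langle\bm{n}\times\bm{u}_h^\star,\bm{r}\rangle_{\partial T}$. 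The boundary contribution $\langle\bm{n}\times\bm{u}_h^\star,\bm{r}\rangle_{\partial T}$ then cancels the one already present in~\eqref{eq:post2}, leaving
$$
(\nabla\times\bm{u}_h^\star,\bm{r})_T=(\bm{u}_h,\nabla\times\bm{r})_T+\langle\bm{n}\times\widehat{\bm{u}}_h,\bm{r}\rangle_{\partial T},\qquad\forall\,\bm{r}\in\nabla\times\bm{\mathcal{P}}_{k+1}(T).
$$

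Finally, since $\nabla\times\bm{\mathcal{P}}_{k+1}(T)\subset\bm{\mathcal{P}}_k(T)$, both identities hold for every $\bm{q}\in\nabla\times\bm{\mathcal{P}}_{k+1}(T)$ and their right-hand sides agree; subtracting gives $(\nabla\times\bm{u}_h^\star,\bm{q})_T=(\varepsilon^{-1}\bm{w}_h,\bm{q})_T$, which is exactly the claim. I expect no genuine analytic obstacle here — the argument is pure bookkeeping — so the only thing requiring care is tracking the sign conventions in the two integration-by-parts formulas and verifying that the $\bm{n}\times\bm{u}_h^\star$ boundary terms cancel exactly; this cancellation is precisely the reason the boundary term $\langle\bm{n}\times\bm{u}_h^\star-\bm{n}\times\widehat{\bm{u}}_h,\bm{r}\rangle_{\partial T}$ was built into~\eqref{eq:post2}. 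The 2D version follows along the same lines, replacing $\nabla\times$ by the scalar operator $\bm{v}\mapsto\nabla\cdot(\mathbf{R}\bm{v})$ and~\eqref{eq:hdg1} by the first equation of~\eqref{eq:hdg2d}.
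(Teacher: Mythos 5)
Your proof is correct and follows exactly the route the paper takes (the paper merely states "substitute the first equation of \eqref{eq:post2} into \eqref{eq:hdg1}," and your argument supplies the localization, the triple-product sign flip, and the Green's identity that make this substitution work). The cancellation of the $\langle\bm{n}\times\bm{u}_h^\star,\bm{r}\rangle_{\partial T}$ terms and the inclusion $\nabla\times\bm{\mathcal{P}}_{k+1}(T)\subset\bm{\mathcal{P}}_k(T)$ are precisely the points that justify the paper's one-line proof.
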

\begin{proof}
	By substituting the first equation of \eqref{eq:post2} into the HDG equation \eqref{eq:hdg1}, we can easily obtain the desired result.\qed
\end{proof}
\begin{remark}\label{rmk:postcurl}
	In the case of 3D problems, $\nabla\times \bm{\mathcal{P}}_{k+1}$ is a subspace of $\bm{\mathcal{P}}_k$, indicating that $\nabla\times\bm{u}_h^\star$ can only be the projection of $\varepsilon^{-1}\bm{w}h$ onto the largest divergence-free subspace of $\bm{\mathcal{P}}_k$. Conversely, for 2D problems, it is known that $\nabla\times \bm{\mathcal{P}}_{k+1} = \nabla\cdot (\mathbf{R}\bm{\mathcal{P}}_{k+1}) = \mathcal{P}_k$. As a result, we have $\nabla_h\times\bm{u}_h^\star = \varepsilon^{-1}\bm{w}_h$, which aligns with the result in Lemma \ref{lemma:post1}.
\end{remark}


Let $\bm{P}_h$ denote the $L^2$-projection onto $\nabla\times \bm{\mathcal{P}}_{k+1}$. Consequently, the projection is ${L}^2$-stable, and we can establish the following result.
\begin{lemma}[approximation property of $\bm{u}_h^\star$]
Suppose that  $\bm{w}=\nabla\times \bm{u}$ and $\bm{u}$ are the solution of \eqref{eq:mixed},  $\bm{u}_h$ is the solution of the discrete problem \eqref{eq:hdg} and $\bm{u}_h^\star$ is obtained by \eqref{eq:post2}. It holds that
	\begin{equation}\label{eq:approxpost}
	\begin{aligned}
	\varepsilon^{1/2}	\|\nabla\times(\bm{u}-\bm{u}_h^\star)\|_{L^2(T)}\le&  \inf_{\bm{{p}}_{k+1}\in\bm{\mathcal{P}}_{k+1}(T)}\varepsilon^{1/2}\|\nabla\times(\bm{u}-\bm{{p}}_{k+1})\|_{L^2(T)} \\
	&~+ \varepsilon^{-1/2}\|\bm{w}-\bm{w}_h\|_{L^2(T)} , \qquad \forall T\in \mathcal{T}_h.
		\end{aligned}
		\end{equation}
\end{lemma}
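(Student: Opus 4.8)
The plan is to convert \eqref{eq:approxpost} into a statement about the $L^2$-projection $\bm{P}_h$ onto $\nabla\times\bm{\mathcal{P}}_{k+1}(T)$. By Lemma \ref{lem:posteq}, $(\nabla\times\bm{u}_h^\star,\bm{q})_T=(\varepsilon^{-1}\bm{w}_h,\bm{q})_T$ for every $\bm{q}\in\nabla\times\bm{\mathcal{P}}_{k+1}(T)$; since $\nabla\times\bm{u}_h^\star$ itself lies in $\nabla\times\bm{\mathcal{P}}_{k+1}(T)$ on each element, this identifies $\nabla\times\bm{u}_h^\star=\bm{P}_h(\varepsilon^{-1}\bm{w}_h)$ on $T$. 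On the other hand, the first equation of \eqref{eq:mixed} gives $\nabla\times\bm{u}=\varepsilon^{-1}\bm{w}$. First I would use these two facts to express the curl of the postprocessing error purely in terms of projection quantities.

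Concretely, inserting $\bm{P}_h(\varepsilon^{-1}\bm{w})$ and using linearity of $\bm{P}_h$,
\begin{equation*}
\nabla\times(\bm{u}-\bm{u}_h^\star)=\varepsilon^{-1}\bm{w}-\bm{P}_h(\varepsilon^{-1}\bm{w}_h)=\big(\varepsilon^{-1}\bm{w}-\bm{P}_h(\varepsilon^{-1}\bm{w})\big)+\bm{P}_h\big(\varepsilon^{-1}(\bm{w}-\bm{w}_h)\big)
\end{equation*}
on each $T\in\mathcal{T}_h$. Multiplying by $\varepsilon^{1/2}$, taking $\|\cdot\|_{L^2(T)}$, and applying the triangle inequality splits the estimate into two pieces. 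For the second piece, the $L^2$-stability of $\bm{P}_h$ yields $\varepsilon^{1/2}\|\bm{P}_h(\varepsilon^{-1}(\bm{w}-\bm{w}_h))\|_{L^2(T)}\le\varepsilon^{1/2}\|\varepsilon^{-1}(\bm{w}-\bm{w}_h)\|_{L^2(T)}=\varepsilon^{-1/2}\|\bm{w}-\bm{w}_h\|_{L^2(T)}$, which is exactly the second term on the right-hand side of \eqref{eq:approxpost}.

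For the first piece I would invoke the best-approximation property of the orthogonal projection onto its range: for any $\bm{p}_{k+1}\in\bm{\mathcal{P}}_{k+1}(T)$ one has $\nabla\times\bm{p}_{k+1}\in\nabla\times\bm{\mathcal{P}}_{k+1}(T)$, so
\begin{equation*}
\varepsilon^{1/2}\big\|\varepsilon^{-1}\bm{w}-\bm{P}_h(\varepsilon^{-1}\bm{w})\big\|_{L^2(T)}=\varepsilon^{1/2}\big\|\nabla\times\bm{u}-\bm{P}_h(\nabla\times\bm{u})\big\|_{L^2(T)}\le\varepsilon^{1/2}\|\nabla\times(\bm{u}-\bm{p}_{k+1})\|_{L^2(T)};
\end{equation*}
taking the infimum over $\bm{p}_{k+1}\in\bm{\mathcal{P}}_{k+1}(T)$ gives the first term on the right of \eqref{eq:approxpost}, and adding the two bounds finishes the proof. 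The argument carries no real technical obstacle; the only point requiring attention is that $\bm{P}_h$ maps onto $\nabla\times\bm{\mathcal{P}}_{k+1}(T)$ — a proper subspace of $\bm{\mathcal{P}}_k(T)$ in 3D, see Remark \ref{rmk:postcurl} — so that its best-approximation property is naturally measured against curls of full degree-$(k+1)$ polynomials, which is precisely why the infimum in \eqref{eq:approxpost} ranges over $\bm{\mathcal{P}}_{k+1}(T)$.
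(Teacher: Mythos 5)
Your argument is correct and is essentially the paper's own proof: both identify $\nabla\times\bm{u}_h^\star=\varepsilon^{-1}\bm{P}_h\bm{w}_h$ from Lemma \ref{lem:posteq}, split $\nabla\times(\bm{u}-\bm{u}_h^\star)$ into the part orthogonal to $\nabla\times\bm{\mathcal{P}}_{k+1}(T)$ plus the projected discrete error, and then apply the best-approximation property and the $L^2$-stability of $\bm{P}_h$ to the two pieces respectively. No gaps; your closing remark about why the infimum ranges over $\bm{\mathcal{P}}_{k+1}(T)$ matches the paper's Remark \ref{rmk:postcurl}.
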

\begin{proof}
	Recalling that Lemma \ref{lem:posteq}  gives $\nabla_h\times \bm{u}_h^\star=\varepsilon^{-1} \bm{P}_h \bm{w}_h$ and leveraging the $L^2$ stability and orthogonality properties of ${{\bm{P}}}_h$, we can derive the following expression:
	$$
	\begin{aligned}
		\|\nabla\times &(\bm{u}-\bm{u}_h^\star)\|_{L^2(T)} \\
		&\le \|(I-{{\bm{P}}}_h) \nabla\times \bm{u}\|_{L^2(T)}+\|{{\bm{P}}}_h\nabla\times( \bm{u} - \bm{u}_h^\star)\|_{L^2(T)}\\
		&=\|(I-\bm{P}_h)\nabla\times(\bm{u}-\bm{{p}}_{k+1})\|_{L^2(T)}+\varepsilon^{-1}\|{{\bm{P}}}_h(\bm{w}-\bm{w}_h)\|_{L^2(T)}\\
		&\le \|\nabla\times(\bm{u}-\bm{{p}}_{k+1})\|_{L^2(T)}+\varepsilon^{-1}\|\bm{w}-\bm{w}_h\|_{L^2(T)},
	\end{aligned}
	$$
	for any $\bm{{p}}_{k+1}\in \bm{\mathcal{P}}_{k+1}(T)$. The desired result is obtained by multiplying both sides by $\varepsilon^{1/2}$.
	\qed
\end{proof}
\begin{remark}
	For 2D problems, the operator $\bm{P}_h$ can be replaced by the identity operator, resulting in the vanishing of the first term on the left side. This observation aligns with the fact that $\varepsilon\nabla\cdot(\mathbf{R}\bm{u}_h^\star)$ exhibits identical accuracy and convergence rate to $w_h$.
	\end{remark}
	\begin{remark}
	In the case of 3D problems, the first term on the right side of \eqref{eq:approxpost} yields
	$$
	\inf_{\bm{{p}}_{k+1}\in\bm{\mathcal{P}}_{k+1}(T)}\varepsilon^{1/2}\|\nabla\times(\bm{u}-\bm{{p}}_{k+1})\|_{L^2(T)}\le C\varepsilon^{1/2}h^{s_w+1}|\bm{u}|_{H_{s_w+2}(T)},
	$$
	where $s_w\in[0,k]$. This result coincides with the first term on the right side of the energy error estimate \eqref{eq:error}. However, it is important to note that as a component of the energy norm, the energy norm estimate \eqref{eq:error} does not provide a precise characterization of $\|\varepsilon^{-1/2}(\bm{w}-\bm{w}_h)\|_{L^2({\Omega})}$. In fact, the dependence of $\|\varepsilon^{-1/2}(\bm{w}-\bm{w}_h)\|_{L^2({\Omega})}$ on both $\varepsilon$ and $h$ is more intricate, and we observe that $\nabla_h\times\bm{u}_h^\star$ achieves one order better approximation of $\nabla\times\bm{u}$ compared to $\nabla_h\times\bm{u}_h$ when $\varepsilon \sim\mathcal{O}(1)$. Moreover, the accuracy is slightly enhanced with the same order when $\varepsilon\ll h$. These observations are numerically demonstrated in Section \ref{sec:numerical}.
\end{remark}

\section{Numerical results}\label{sec:numerical}
In this section, we present several numerical results for both three-dimensional problems \eqref{eq:mixed} and two-dimensional \eqref{eq:Hcurl-cd-2d} to validate our theoretical findings and display the performance for our proposed HDG method \eqref{eq:hdg} and postprocessing technique \eqref{eq:post2}. We use the uniform meshes with varying mesh sizes for the computational domain $\Omega$. For the stabilization parameters, we choose the following formulation that fulfills the requirements \eqref{eq:asstau1}-\eqref{eq:asstau4}:
$$
\begin{aligned}
	& \tau_t(F)=\max \left(\sup _{x \in F} \boldsymbol{\beta}(x) \cdot \boldsymbol{n}(x), 0\right)+\min \left(\frac{\varepsilon}{h_F}, 1\right), \quad \forall F \in \partial T, \forall T \in \mathcal{T}_h, \\
	& \tau_n(F)=\max\bigg(\max \left(\sup _{x \in F} \boldsymbol{\beta}(x) \cdot \boldsymbol{n}(x), 0\right),0.1\bigg), \quad \forall F \in \partial T, \forall T \in \mathcal{T}_h. \\
	&
\end{aligned}
$$
In order to study the convergence and accuracy of the methods we define the error in the broken $\bm{H}({\rm curl}; \mathcal{T}_h)$ semi-norm as
$$
\left|\boldsymbol{u}-\boldsymbol{u}_h\right|_{\boldsymbol{H}^c}:=\left(\sum_{T \in \mathcal{T}_h} \int_T\left\|\nabla \times\left(\boldsymbol{u}-\boldsymbol{u}_h\right)\right\|^2\right)^{1 / 2}.
$$
\subsection{Experiment I: 3D smooth solution}
We consider the case where $\boldsymbol{\beta}=(1,2,3)^T, \gamma=0$ ensuring the validity of  Assumptions \ref{ass:beta} and \ref{ass:reaction}   with $\rho(\bm{x}) \equiv 0$. The diffusion coefficient $\varepsilon$ is varied as $1,10^{-3}, 10^{-9}$.The forcing term $\boldsymbol{f}$ is chosen so that the analytical solution of \eqref{eq:original} is given by $\boldsymbol{u}(x, y, z)=$ $(\sin(y), \sin(z), \sin(x))^T$ in a unit cube $\Omega=(0,1)^3$ and the boundary data is determined accordingly based on the solution.

The convergence results are summarized in Table \ref{tab:energy3d} for the energy norm, showing a convergence rate of $k+\frac{1}{2}$ with respect to the mesh size $h$ for both diffusion-dominated and convection-dominated cases with $k=0,1$. These findings align with the result in Theorem \ref{thm:converge}. Additionally, Table \ref{tab:l23d} demonstrates $k+1$ order convergence of the $L^2$ norm for all $\varepsilon$ values when considering smooth solutions. Table \ref{tab:post3d} presents the convergence results for $\bm{u}_h$ and our proposed postprocessing solution $\bm{u}_h^\star$ in the $\bm{H}^c$ semi-norm. We observe that $\bm{u}_h^\star$ exhibits similar convergence behavior to $\bm{w}_h$, as anticipated by Lemma \ref{lem:posteq}. We observe that for the diffusion-dominated case ($\varepsilon=1$), $\bm{u}_h^\star$ achieves much better accuracy compared to $\bm{u}_h$, which demonstrates the effectiveness of our proposed postprocessing method. In the convection-dominated cases, the  $\bm{H}^c$ semi-norm convergence rate of $\bm{u}_h^\star$ matches that of $\bm{u}_h$, while still yielding slightly lower errors. In summary, the convergence of $\bm{u}_h^\star$ is dominated by the complex convergence behavior of $\bm{w}_h$, as evidenced by the results.

\begin{table}[!htbp]
	\centering
	
	\caption{Experiment I: History of convergence for $\triplenorm{(\bm{w}-\bm{w}_h,\bm{u}-\bm{u}_h,\bm{u}-\widehat{\bm{u}}_h)}$.}
	\label{tab:energy3d}
	\begin{tabular}{cccccccc}
		\hline
		\multirow{2}*{$k$} & \multirow{2}*{$1/h$}& \multicolumn{2}{c}{{$\varepsilon=1$}}&  \multicolumn{2}{c}{{$\varepsilon=10^{-3}$}}&  \multicolumn{2}{c}{{$\varepsilon=10^{-9}$}}\\
			\cline{3-8}
			\cline{3-8}
		& & Error & Order  & Error & Order  & Error & Order\\
		\hline
		\multirow{5}*{0}& 1	 & 1.56e+0	 & ---	 & 1.17e+0	 & --- 	 & 1.17e+0	 & --- \\
		& 2	 & 1.20e+0	 & 0.38	 & 9.95e-1	 & 0.23 	 & 9.95e-1	 & 0.23  \\
		& 4	 & 8.55e-1	 & 0.49	 & 7.47e-1	 & 0.41 	 & 7.48e-1	 & 0.41  \\
		& 8	 & 6.06e-1	 & 0.50	 & 5.43e-1	 & 0.46 	 & 5.45e-1	 & 0.46  \\
		& 16	 & 4.29e-1	 & 0.50	 & 3.88e-1	 & 0.48 	 & 3.92e-1	 & 0.48  \\
		\hline
		\multirow{5}*{1} & 1	 & 1.67e-1	 & ---	 & 1.18e-1	 & --- 	 & 1.18e-1	 & --- \\
		& 2	 & 5.96e-2	 & 1.49	 & 4.99e-2	 & 1.24 	 & 4.99e-2	 & 1.24  \\
		& 4	 & 2.07e-2	 & 1.53	 & 1.77e-2	 & 1.49 	 & 1.78e-2	 & 1.49  \\
		& 8	 & 7.26e-3	 & 1.51	 & 6.40e-3	 & 1.47 	 & 6.51e-3	 & 1.45  \\
		& 16	 & 2.55e-3	 & 1.51	 & 2.27e-3	 & 1.50 	 & 2.35e-3	 & 1.47  \\
		\hline
	\end{tabular}
\end{table}

\begin{table}[!htbp]
	\centering
  \caption{Experiment I: History of convergence for
  $\|\bm{u}-\bm{u}_h\|_{\mathcal{T}_h}$.}
	\label{tab:l23d}
	\begin{tabular}{cccccccc}
		\hline
		\multirow{2}*{$k$} & \multirow{2}*{$1/h$}& \multicolumn{2}{c}{{$\varepsilon=1$}}&  \multicolumn{2}{c}{{$\varepsilon=10^{-3}$}}&  \multicolumn{2}{c}{{$\varepsilon=10^{-9}$}}\\
		\cline{3-8}
		\cline{3-8}
		& & Error & Order  & Error & Order  & Error & Order\\
		\hline
		\multirow{5}*{0} & 1	 & 3.36e-1	 & ---	 & 3.89e-1	 & --- 	 & 3.89e-1	 & --- \\
		& 2	 & 1.66e-1	 & 1.02	 & 1.80e-1	 & 1.11 	 & 1.80e-1	 & 1.11  \\
		& 4	 & 8.45e-2	 & 0.97	 & 9.64e-2	 & 0.90 	 & 9.67e-2	 & 0.90  \\
		& 8	 & 4.28e-2	 & 0.98	 & 4.96e-2	 & 0.96 	 & 4.99e-2	 & 0.95  \\
		& 16	 & 2.15e-2	 & 0.99	 & 2.50e-2	 & 0.99 	 & 2.54e-2	 & 0.98  \\
		\hline
		\multirow{5}*{1} & 1	 & 2.51e-2	 & ---	 & 3.36e-2	 & --- 	 & 3.36e-2	 & --- \\
		& 2	 & 6.59e-3	 & 1.93	 & 9.08e-3	 & 1.89 	 & 9.12e-3	 & 1.88  \\
		& 4	 & 1.72e-3	 & 1.94	 & 2.35e-3	 & 1.95 	 & 2.38e-3	 & 1.94  \\
		& 8	 & 4.43e-4	 & 1.96	 & 6.11e-4	 & 1.95 	 & 6.27e-4	 & 1.93  \\
		& 16	 & 1.12e-4	 & 1.98	 & 1.53e-4	 & 1.99 	 & 1.62e-4	 & 1.95  \\
		\hline
	\end{tabular}
\end{table}

\begin{table}[!htbp]
	\centering
\caption{Experiment I: History convergence of the HDG methods and postprocessing
  method in the broken $\bm{H}^c$ semi-norm.}
	\label{tab:post3d}
	\begin{tabular}{cccccccc}
		\hline
		\multicolumn{8}{c}{$\varepsilon=1$}\\
		\hline
		\multirow{2}*{$k$} & \multirow{2}*{$1/h$}& \multicolumn{2}{c}{$\varepsilon^{-1}\|\bm{w}-\bm{w}_h\|_{\mathcal{T}_h}$}& \multicolumn{2}{c}{$|\bm{u}-\bm{u}_h|_{\bm{H}^c} $} & \multicolumn{2}{c}{$|\bm{u}-\bm{u}_h^\star|_{\bm{H}^c} $} \\
		\cline{3-8}
		& & Error & Order  & Error & Order  & Error & Order \\
		\hline
		\multirow{5}*{1} 
		& 1	 & 8.80e-2	 & --- 	 & 2.25e-2	 & ---	 & 8.80e-2	 & --- \\
		& 2	  & 2.50e-2	 & 1.81 	 & 9.89e-2	 & 0.89	 & 2.34e-2	 & 1.91  \\
		& 4	  & 7.83e-3	 & 1.68 	 & 5.02e-2	 & 0.98	 & 7.35e-3	 & 1.67  \\
		& 8	  & 2.51e-3	 & 1.64 	 & 2.52e-2	 & 0.99	 & 2.35e-3	 & 1.64  \\
		& 16	  & 8.28e-4	 & 1.60 	 & 1.26e-2	 & 1.00	 & 7.74e-4	 & 1.60  \\
		\hline
		\hline
		\multicolumn{8}{c}{$\varepsilon=10^{-3}$}\\
		\hline
		\multirow{2}*{$k$} & \multirow{2}*{$1/h$}& \multicolumn{2}{c}{$\varepsilon^{-1}\|\bm{w}-\bm{w}_h\|_{\mathcal{T}_h}$}& \multicolumn{2}{c}{$|\bm{u}-\bm{u}_h|_{\bm{H}^c} $} & \multicolumn{2}{c}{$|\bm{u}-\bm{u}_h^\star|_{\bm{H}^c} $} \\
		\cline{3-8}
		& & Error & Order  & Error & Order  & Error & Order \\
			\hline
		\multirow{5}*{1}& 1	 & 2.37e-1	 & --- 	 & 3.49e-2	 & ---	 & 2.34e-1	 & --- \\
		& 2	  & 1.05e-1	 & 1.18 	 & 1.05e-1	 & 0.89	 & 1.02e-1	 & 1.21  \\
		& 4	  & 4.94e-2	 & 1.09 	 & 5.37e-2	 & 0.97	 & 4.83e-2	 & 1.07  \\
		& 8	  & 2.38e-2	 & 1.05 	 & 2.72e-2	 & 0.98	 & 2.34e-2	 & 1.05  \\
		& 16	  & 1.10e-2	 & 1.11 	 & 1.37e-2	 & 0.99	 & 1.09e-2	 & 1.10  \\
		\hline
		\hline
		\multicolumn{8}{c}{$\varepsilon=10^{-9}$}\\
		\hline
		\multirow{2}*{$k$} & \multirow{2}*{$1/h$}& \multicolumn{2}{c}{$\varepsilon^{-1}\|\bm{w}-\bm{w}_h\|_{\mathcal{T}_h}$}& \multicolumn{2}{c}{$|\bm{u}-\bm{u}_h|_{\bm{H}^c} $} & \multicolumn{2}{c}{$|\bm{u}-\bm{u}_h^\star|_{\bm{H}^c} $} \\
		\cline{3-8}
		& & Error & Order  & Error & Order  & Error & Order \\
		\hline
		\multirow{5}*{1}& 1	 & 2.41e-1	 & --- 	 & 3.51e-2	 & ---	 & 2.39e-1	 & --- \\
		& 2	  & 1.08e-1	 & 1.16 	 & 1.05e-1	 & 0.89	 & 1.05e-1	 & 1.19  \\
		& 4	  & 5.23e-2	 & 1.05 	 & 5.38e-2	 & 0.97	 & 5.11e-2	 & 1.04  \\
		& 8	  & 2.65e-2	 & 0.98 	 & 2.73e-2	 & 0.98	 & 2.60e-2	 & 0.98  \\
		& 16	  & 1.34e-2	 & 0.98 	 & 1.38e-2	 & 0.99	 & 1.32e-2	 & 0.97  \\
		\hline
	\end{tabular}
\end{table}
\subsection{Experiment II: 2D smooth solution}
In this subsection, we consider the 2D problem \eqref{eq:Hcurl-cd-2d} with the parameters $\boldsymbol{\beta}=(1,2)^T$ and $\gamma=0$. We vary the diffusion coefficient $\varepsilon$ with values $1,10^{-3},10^{-9}$. The forcing term $\boldsymbol{f}$ is chosen such that the analytical solution of \eqref{eq:original} is given by $\boldsymbol{u}(x, y)=$ $(\sin(y),\sin(x))^T$ in the unit square $\Omega=(0,1)^2$, and the boundary data is determined accordingly.

In the 2D case, similar convergence results can be observed for both the energy norm and the $L^2$ norm, just as in the 3D case. Table \ref{tab:energy2d} demonstrates $k+\frac{1}{2}$ order convergence for the energy norm, while Table \ref{tab:l22d} shows $k+1$ order convergence for the $L^2$ norm, where $k=0,1,2$, across all $\varepsilon$ values. Additionally, we compare our proposed postprocessing solution, denoted as $\bm{u}_h^\star$, with the postprocessing solution defined in \eqref{eq:postprocess1} as $\bm{u}_h^*$.  Table \ref{tab:post2d} reveals that both $\bm{u}_h^*$ and $\bm{u}_h^\star$ exhibit the same convergence behavior in the broken $\bm{H}^c$ semi-norm, which aligns with the convergence of $\bm{w}_h$. This is consistent with the statement in Remark \ref{rmk:postcurl} that ${\rm curl}\bm{u}_h^\star$ and ${\rm curl}\bm{u}_h^*$ are equal to $\varepsilon^{-1}{w}_h$ for  2D problems. Both methods achieve much better accuracy in the diffusion-dominated case and slightly lower error in the convection-dominated case compared to $\bm{u}_h$. This highlights the effectiveness of the postprocessing methods, with the distinction that our method can also be applied to 3D problems. Finally, Table \ref{tab:post2dl2} presents the $L^2$ norm error of $\bm{u}_h^\star$ and $\bm{u}_h^*$ for $k=0$. We observe that our proposed postprocessing method achieves slightly better $L^2$ accuracy compared to $\bm{u}_h^*$.
\begin{table}[!htbp]
	\centering
  \caption{Experiment II: History of convergence for
  $\triplenorm{({w}-{w}_h,\bm{u}-\bm{u}_h,\bm{u}-\widehat{\bm{u}}_h)}$.}
	\label{tab:energy2d}
	\begin{tabular}{cccccccc}
		\hline
		\multirow{2}*{$k$} & \multirow{2}*{$1/h$}& \multicolumn{2}{c}{{$\varepsilon=1$}}&  \multicolumn{2}{c}{{$\varepsilon=10^{-3}$}}&  \multicolumn{2}{c}{{$\varepsilon=10^{-9}$}}\\
		\cline{3-8}
		\cline{3-8}
		& & Error & Order  & Error & Order  & Error & Order\\
		\hline
		\multirow{6}*{0}& 2	 & 7.77e-1	 & ---	 & 6.29e-1	 & --- 	 & 6.29e-1	 & --- \\
		& 4	 & 5.13e-1	 & 0.60	 & 4.10e-1	 & 0.62 	 & 4.11e-1	 & 0.61  \\
		& 8	 & 3.64e-1	 & 0.50	 & 2.95e-1	 & 0.47 	 & 2.97e-1	 & 0.47  \\
		& 16	 & 2.58e-1	 & 0.50	 & 2.10e-1	 & 0.49 	 & 2.12e-1	 & 0.48  \\
		& 32	 & 1.82e-1	 & 0.50	 & 1.49e-1	 & 0.50 	 & 1.51e-1	 & 0.49  \\
		& 64	 & 1.29e-1	 & 0.50	 & 1.05e-1	 & 0.50 	 & 1.08e-1	 & 0.49  \\
		\hline
		\multirow{6}*{1} & 2	 & 2.85e-2	 & ---	 & 2.76e-2	 & --- 	 & 2.76e-2	 & --- \\
		& 4	 & 1.15e-2	 & 1.31	 & 9.19e-3	 & 1.59 	 & 9.21e-3	 & 1.59  \\
		& 8	 & 4.06e-3	 & 1.50	 & 3.27e-3	 & 1.49 	 & 3.29e-3	 & 1.49  \\
		& 16	 & 1.43e-3	 & 1.50	 & 1.16e-3	 & 1.50 	 & 1.17e-3	 & 1.49  \\
		& 32	 & 5.06e-4	 & 1.50	 & 4.07e-4	 & 1.51 	 & 4.15e-4	 & 1.50  \\
		& 64	 & 1.79e-4	 & 1.50	 & 1.43e-4	 & 1.51 	 & 1.47e-4	 & 1.50  \\
		\hline
		\multirow{6}*{2}
		& 2	 & 1.55e-3	 & ---	 & 1.44e-3	 & --- 	 & 1.45e-3	 & --- \\
		& 4	 & 3.07e-4	 & 2.34	 & 2.50e-4	 & 2.53 	 & 2.55e-4	 & 2.50  \\
		& 8	 & 5.52e-5	 & 2.48	 & 4.45e-5	 & 2.49 	 & 4.62e-5	 & 2.47  \\
		& 16	 & 9.68e-6	 & 2.51	 & 7.71e-6	 & 2.53 	 & 8.22e-6	 & 2.49  \\
		& 32	 & 1.69e-6	 & 2.52	 & 1.32e-6	 & 2.55 	 & 1.45e-6	 & 2.50  \\
		& 64	 & 2.94e-7	 & 2.52	 & 2.24e-7	 & 2.56 	 & 2.57e-7	 & 2.50  \\
		\hline
	\end{tabular}
\end{table}

\begin{table}[!htbp]
	\centering
  \caption{Experiment II: History of convergence for
  $\|\bm{u}-\bm{u}_h\|_{\mathcal{T}_h}$.}
  \label{tab:l22d}
	\begin{tabular}{cccccccc}
		\hline
		\multirow{2}*{$k$} & \multirow{2}*{$1/h$}& \multicolumn{2}{c}{{$\varepsilon=1$}}&  \multicolumn{2}{c}{{$\varepsilon=10^{-3}$}}&  \multicolumn{2}{c}{{$\varepsilon=10^{-9}$}}\\
		\cline{3-8}
		\cline{3-8}
		& & Error & Order  & Error & Order  & Error & Order\\
		\hline
		\multirow{6}*{0} & 2	 & 1.72e-1	 & ---	 & 1.77e-1	 & --- 	 & 1.77e-1	 & --- \\
		& 4	 & 8.86e-2	 & 0.96	 & 9.58e-2	 & 0.89 	 & 9.63e-2	 & 0.88  \\
		& 8	 & 4.47e-2	 & 0.99	 & 4.88e-2	 & 0.97 	 & 4.93e-2	 & 0.97  \\
		& 16	 & 2.25e-2	 & 0.99	 & 2.44e-2	 & 1.00 	 & 2.49e-2	 & 0.98  \\
		& 32	 & 1.13e-2	 & 1.00	 & 1.21e-2	 & 1.02 	 & 1.26e-2	 & 0.99  \\
		& 64	 & 5.65e-3	 & 1.00	 & 5.89e-3	 & 1.04 	 & 6.32e-3	 & 0.99  \\
		\hline
		\multirow{6}*{1} & 2	 & 6.12e-3	 & ---	 & 7.80e-3	 & --- 	 & 7.81e-3	 & --- \\
		& 4	 & 1.83e-3	 & 1.74	 & 2.26e-3	 & 1.79 	 & 2.28e-3	 & 1.78  \\
		& 8	 & 4.65e-4	 & 1.97	 & 5.72e-4	 & 1.98 	 & 5.82e-4	 & 1.97  \\
		& 16	 & 1.17e-4	 & 1.99	 & 1.42e-4	 & 2.01 	 & 1.47e-4	 & 1.98  \\
		& 32	 & 2.95e-5	 & 1.99	 & 3.47e-5	 & 2.04 	 & 3.70e-5	 & 1.99  \\
		& 64	 & 7.38e-6	 & 2.00	 & 8.29e-6	 & 2.07 	 & 9.28e-6	 & 2.00  \\
		\hline
		\multirow{6}*{2} & 2	 & 4.51e-4	 & ---	 & 5.41e-4	 & --- 	 & 5.42e-4	 & --- \\
		& 4	 & 6.73e-5	 & 2.74	 & 7.12e-5	 & 2.93 	 & 7.18e-5	 & 2.92  \\
		& 8	 & 8.54e-6	 & 2.98	 & 8.88e-6	 & 3.00 	 & 9.01e-6	 & 2.99  \\
		& 16	 & 1.07e-6	 & 3.00	 & 1.10e-6	 & 3.01 	 & 1.13e-6	 & 3.00  \\
		& 32	 & 1.32e-7	 & 3.01	 & 1.36e-7	 & 3.02 	 & 1.41e-7	 & 3.00  \\
		& 64	 & 1.64e-8	 & 3.01	 & 1.67e-8	 & 3.02 	 & 1.76e-8	 & 3.00  \\
		\hline
	\end{tabular}
\end{table}

\begin{table}[!htbp]
	\centering
  \caption{Experiment II: History convergence of the HDG methods and postprocessing
  methods in the broken $\bm{H}^c$ semi-norm.}
	\label{tab:post2d}
	\begin{tabular}{cccccccccc}
		\hline
		\multicolumn{10}{c}{$\varepsilon=1$}\\
		\hline
		\multirow{2}*{$k$} & \multirow{2}*{$1/h$}& \multicolumn{2}{c}{$\varepsilon^{-1}\|{w}-{w}_h\|_{\mathcal{T}_h}$}& \multicolumn{2}{c}{$|\bm{u}-\bm{u}_h|_{\bm{H}^c} $}& \multicolumn{2}{c}{$|\bm{u}-\bm{u}_h^*|_{\bm{H}^c} $} & \multicolumn{2}{c}{$|\bm{u}-\bm{u}_h^\star|_{\bm{H}^c} $} \\
		\cline{3-10}
		& & Error & Order  & Error & Order  & Error & Order & Error & Order\\
		\hline
		\multirow{6}*{1} & 2	 & 1.23e-2	 & --- 	 & 1.04e-1	 & --- 	 & 1.23e-2	 & ---	 & 1.23e-2	 & --- \\
		& 4	 & 3.25e-3	 & 1.92 	 & 4.58e-2	 & 1.18 	 & 3.25e-3	 & 1.92	 & 3.25e-3	 & 1.92  \\
		& 8	 & 8.41e-4	 & 1.95 	 & 2.33e-2	 & 0.98 	 & 8.41e-4	 & 1.95	 & 8.41e-4	 & 1.95  \\
		& 16	 & 2.14e-4	 & 1.98 	 & 1.17e-2	 & 0.99 	 & 2.14e-4	 & 1.98	 & 2.14e-4	 & 1.98  \\
		& 32	 & 5.39e-5	 & 1.99 	 & 5.87e-3	 & 1.00 	 & 5.39e-5	 & 1.99	 & 5.39e-5	 & 1.99  \\
		& 64	 & 1.35e-5	 & 1.99 	 & 2.94e-3	 & 1.00 	 & 1.35e-5	 & 1.99	 & 1.35e-5	 & 1.99  \\
		\hline
		\multirow{6}*{2}& 2	 & 5.86e-4	 & --- 	 & 9.29e-3	 & --- 	 & 5.86e-4	 & ---	 & 5.86e-4	 & --- \\
		& 4	 & 6.52e-5	 & 3.17 	 & 2.14e-3	 & 2.12 	 & 6.52e-5	 & 3.17	 & 6.52e-5	 & 3.17  \\
		& 8	 & 8.07e-6	 & 3.01 	 & 5.34e-4	 & 2.00 	 & 8.07e-6	 & 3.01	 & 8.07e-6	 & 3.01  \\
		& 16	 & 1.00e-6	 & 3.01 	 & 1.33e-4	 & 2.00 	 & 1.00e-6	 & 3.01	 & 1.00e-6	 & 3.01  \\
		& 32	 & 1.25e-7	 & 3.00 	 & 3.33e-5	 & 2.00 	 & 1.25e-7	 & 3.00	 & 1.25e-7	 & 3.00  \\
		& 64	 & 1.56e-8	 & 3.00 	 & 8.33e-6	 & 2.00 	 & 1.56e-8	 & 3.00	 & 1.56e-8	 & 3.00  \\
		\hline
		\hline
		\multicolumn{10}{c}{$\varepsilon=10^{-3}$}\\
		\hline
		\multirow{2}*{$k$} & \multirow{2}*{$1/h$}& \multicolumn{2}{c}{$\varepsilon^{-1}\|\bm{w}-\bm{w}_h\|_{\mathcal{T}_h}$}& \multicolumn{2}{c}{$|\bm{u}-\bm{u}_h|_{\bm{H}^c} $}& \multicolumn{2}{c}{$|\bm{u}-\bm{u}_h^*|_{\bm{H}^c} $} & \multicolumn{2}{c}{$|\bm{u}-\bm{u}_h^\star|_{\bm{H}^c} $} \\
		\cline{3-10}
		& & Error & Order  & Error & Order  & Error & Order & Error & Order\\
		\hline
		\multirow{6}*{1} & 2	 & 3.48e-2	 & --- 	 & 1.03e-1	 & --- 	 & 3.48e-2	 & ---	 & 3.48e-2	 & --- \\
		& 4	 & 3.01e-2	 & 0.21 	 & 4.52e-2	 & 1.19 	 & 3.01e-2	 & 0.21	 & 3.01e-2	 & 0.21  \\
		& 8	 & 1.49e-2	 & 1.02 	 & 2.30e-2	 & 0.97 	 & 1.49e-2	 & 1.02	 & 1.49e-2	 & 1.02  \\
		& 16	 & 6.99e-3	 & 1.09 	 & 1.16e-2	 & 0.99 	 & 6.99e-3	 & 1.09	 & 6.99e-3	 & 1.09  \\
		& 32	 & 3.09e-3	 & 1.18 	 & 5.79e-3	 & 1.00 	 & 3.09e-3	 & 1.18	 & 3.09e-3	 & 1.18  \\
		& 64	 & 1.27e-3	 & 1.28 	 & 2.89e-3	 & 1.00 	 & 1.27e-3	 & 1.28	 & 1.27e-3	 & 1.28  \\
		\hline
		\multirow{6}*{2}& 2	 & 3.66e-3	 & --- 	 & 1.05e-2	 & --- 	 & 3.66e-3	 & ---	 & 3.66e-3	 & --- \\
		& 4	 & 1.39e-3	 & 1.40 	 & 2.42e-3	 & 2.12 	 & 1.39e-3	 & 1.40	 & 1.39e-3	 & 1.40  \\
		& 8	 & 3.23e-4	 & 2.10 	 & 6.08e-4	 & 1.99 	 & 3.23e-4	 & 2.10	 & 3.23e-4	 & 2.10  \\
		& 16	 & 6.94e-5	 & 2.22 	 & 1.52e-4	 & 2.00 	 & 6.94e-5	 & 2.22	 & 6.94e-5	 & 2.22  \\
		& 32	 & 1.37e-5	 & 2.34 	 & 3.80e-5	 & 2.00 	 & 1.37e-5	 & 2.34	 & 1.37e-5	 & 2.34  \\
		& 64	 & 2.54e-6	 & 2.43 	 & 9.44e-6	 & 2.01 	 & 2.54e-6	 & 2.43	 & 2.54e-6	 & 2.43  \\
		\hline
		\hline
		\multicolumn{10}{c}{$\varepsilon=10^{-9}$}\\
		\hline
		\multirow{2}*{$k$} & \multirow{2}*{$1/h$}& \multicolumn{2}{c}{$\varepsilon^{-1}\|\bm{w}-\bm{w}_h\|_{\mathcal{T}_h}$}& \multicolumn{2}{c}{$|\bm{u}-\bm{u}_h|_{\bm{H}^c} $}& \multicolumn{2}{c}{$|\bm{u}-\bm{u}_h^*|_{\bm{H}^c} $} & \multicolumn{2}{c}{$|\bm{u}-\bm{u}_h^\star|_{\bm{H}^c} $} \\
		\cline{3-10}
		& & Error & Order  & Error & Order  & Error & Order & Error & Order\\
		\hline
		\multirow{6}*{1} & 2	 & 3.52e-2	 & --- 	 & 1.03e-1	 & --- 	 & 3.52e-2	 & ---	 & 3.52e-2	 & --- \\
		& 4	 & 3.17e-2	 & 0.15 	 & 4.52e-2	 & 1.19 	 & 3.17e-2	 & 0.15	 & 3.17e-2	 & 0.15  \\
		& 8	 & 1.63e-2	 & 0.96 	 & 2.30e-2	 & 0.97 	 & 1.63e-2	 & 0.96	 & 1.63e-2	 & 0.96  \\
		& 16	 & 8.27e-3	 & 0.98 	 & 1.16e-2	 & 0.99 	 & 8.27e-3	 & 0.98	 & 8.27e-3	 & 0.98  \\
		& 32	 & 4.17e-3	 & 0.99 	 & 5.83e-3	 & 0.99 	 & 4.17e-3	 & 0.99	 & 4.17e-3	 & 0.99  \\
		& 64	 & 2.09e-3	 & 0.99 	 & 2.92e-3	 & 1.00 	 & 2.09e-3	 & 0.99	 & 2.09e-3	 & 0.99  \\
		\hline
		\multirow{6}*{2}& 2	 & 3.72e-3	 & --- 	 & 1.06e-2	 & --- 	 & 3.72e-3	 & ---	 & 3.72e-3	 & --- \\
		& 4	 & 1.55e-3	 & 1.27 	 & 2.43e-3	 & 2.12 	 & 1.55e-3	 & 1.27	 & 1.55e-3	 & 1.27  \\
		& 8	 & 3.99e-4	 & 1.95 	 & 6.09e-4	 & 1.99 	 & 3.99e-4	 & 1.95	 & 3.99e-4	 & 1.95  \\
		& 16	 & 1.01e-4	 & 1.98 	 & 1.52e-4	 & 2.00 	 & 1.01e-4	 & 1.98	 & 1.01e-4	 & 1.98  \\
		& 32	 & 2.55e-5	 & 1.99 	 & 3.81e-5	 & 2.00 	 & 2.55e-5	 & 1.99	 & 2.55e-5	 & 1.99  \\
		& 64	 & 6.40e-6	 & 1.99 	 & 9.53e-6	 & 2.00 	 & 6.40e-6	 & 1.99	 & 6.40e-6	 & 1.99  \\
		\hline
	\end{tabular}
\end{table}

\begin{table}[!htbp]
	\centering
	\caption{Experiment II: History convergence of the postprocessing methods in the $L^2$ norm.}
	\label{tab:post2dl2}
	\begin{tabular}{cccccc}
		\hline
		\multicolumn{6}{c}{$\varepsilon=1$}\\
		\hline
		\multirow{2}*{$k$} & \multirow{2}*{$1/h$}& \multicolumn{2}{c}{$\|\bm{u}-\bm{u}_h^*\|_{\mathcal{T}_h}$}& \multicolumn{2}{c}{$\|\bm{u}-\bm{u}_h^\star\|_{\mathcal{T}_h}$} \\
		\cline{3-6}
		& & Error & Order  & Error & Order \\
		\hline
		\multirow{6}*{0} & 2	 & 2.48e-1	 & --- 	 & 1.72e-1	 & ---  \\
		& 4	 & 1.25e-1	 & 0.99 	 & 8.84e-2	 & 0.96   \\
		& 8	 & 6.27e-2	 & 0.99 	 & 4.46e-2	 & 0.99   \\
		& 16	 & 3.14e-2	 & 1.00 	 & 2.24e-2	 & 0.99   \\
		& 32	 & 1.57e-2	 & 1.00 	 & 1.12e-2	 & 1.00   \\
		& 64	 & 7.87e-3	 & 1.00 	 & 5.63e-3	 & 1.00   \\
		\hline
		\hline
		\multicolumn{6}{c}{$\varepsilon=10^{-3}$}\\
		\hline
		\multirow{2}*{$k$} & \multirow{2}*{$1/h$}& \multicolumn{2}{c}{$\|\bm{u}-\bm{u}_h^*\|_{\mathcal{T}_h}$}& \multicolumn{2}{c}{$\|\bm{u}-\bm{u}_h^\star\|_{\mathcal{T}_h}$} \\
		\cline{3-6}
		& & Error & Order  & Error & Order \\
		\hline
		\multirow{6}*{0} & 2	 & 2.63e-1	 & --- 	 & 1.79e-1	 & ---  \\
		& 4	 & 1.34e-1	 & 0.98 	 & 9.79e-2	 & 0.87   \\
		& 8	 & 6.80e-2	 & 0.97 	 & 4.97e-2	 & 0.98   \\
		& 16	 & 3.41e-2	 & 0.99 	 & 2.49e-2	 & 1.00   \\
		& 32	 & 1.69e-2	 & 1.01 	 & 1.22e-2	 & 1.02   \\
		& 64	 & 8.35e-3	 & 1.02 	 & 5.95e-3	 & 1.04   \\
		\hline
		\hline
		\multicolumn{6}{c}{$\varepsilon=10^{-9}$}\\
		\hline
		\multirow{2}*{$k$} & \multirow{2}*{$1/h$}& \multicolumn{2}{c}{$\|\bm{u}-\bm{u}_h^*\|_{\mathcal{T}_h}$}& \multicolumn{2}{c}{$\|\bm{u}-\bm{u}_h^\star\|_{\mathcal{T}_h}$} \\
		\cline{3-6}
		& & Error & Order  & Error & Order \\
		\hline
		\multirow{6}*{0} & 2	 & 2.63e-1	 & --- 	 & 1.79e-1	 & ---  \\
		& 4	 & 1.34e-1	 & 0.97 	 & 9.85e-2	 & 0.86   \\
		& 8	 & 6.86e-2	 & 0.97 	 & 5.04e-2	 & 0.97   \\
		& 16	 & 3.47e-2	 & 0.98 	 & 2.55e-2	 & 0.98   \\
		& 32	 & 1.75e-2	 & 0.99 	 & 1.29e-2	 & 0.99   \\
		& 64	 & 8.78e-3	 & 0.99 	 & 6.46e-3	 & 0.99   \\
		\hline
	\end{tabular}
\end{table}

\subsection{Experiment III: Rotating flow}
In this experiment, we consider the rotating flow problem with $\varepsilon=10^{-9}$, $\gamma=0$, $\bm{\beta}=(y-1/2,1/2-x)^T$ and the domain $\Omega=(0,1)^2$. The solution is prescribed along the slit $1/2\times[0,1/2]$ as follows:
$$
\bm{u}(1/2,y)=(	\sin^2(2\pi y),\sin^2(2\pi y))^T, \quad y\in [0,1/2].
$$
We present the results of the HDG method using different polynomial degrees $k=0,1,2$. Figure \ref{fig:rot} displays the first component of the HDG solution $\bm{u}_{h}$ on a uniform mesh with $h=1/16$. The HDG method demonstrates a good performance in this problem and it is evident that higher-order methods yield improved approximation results.
\begin{figure}[!htbp]
	\centering
	{\includegraphics[width=.45\textwidth,height=.23\textheight]{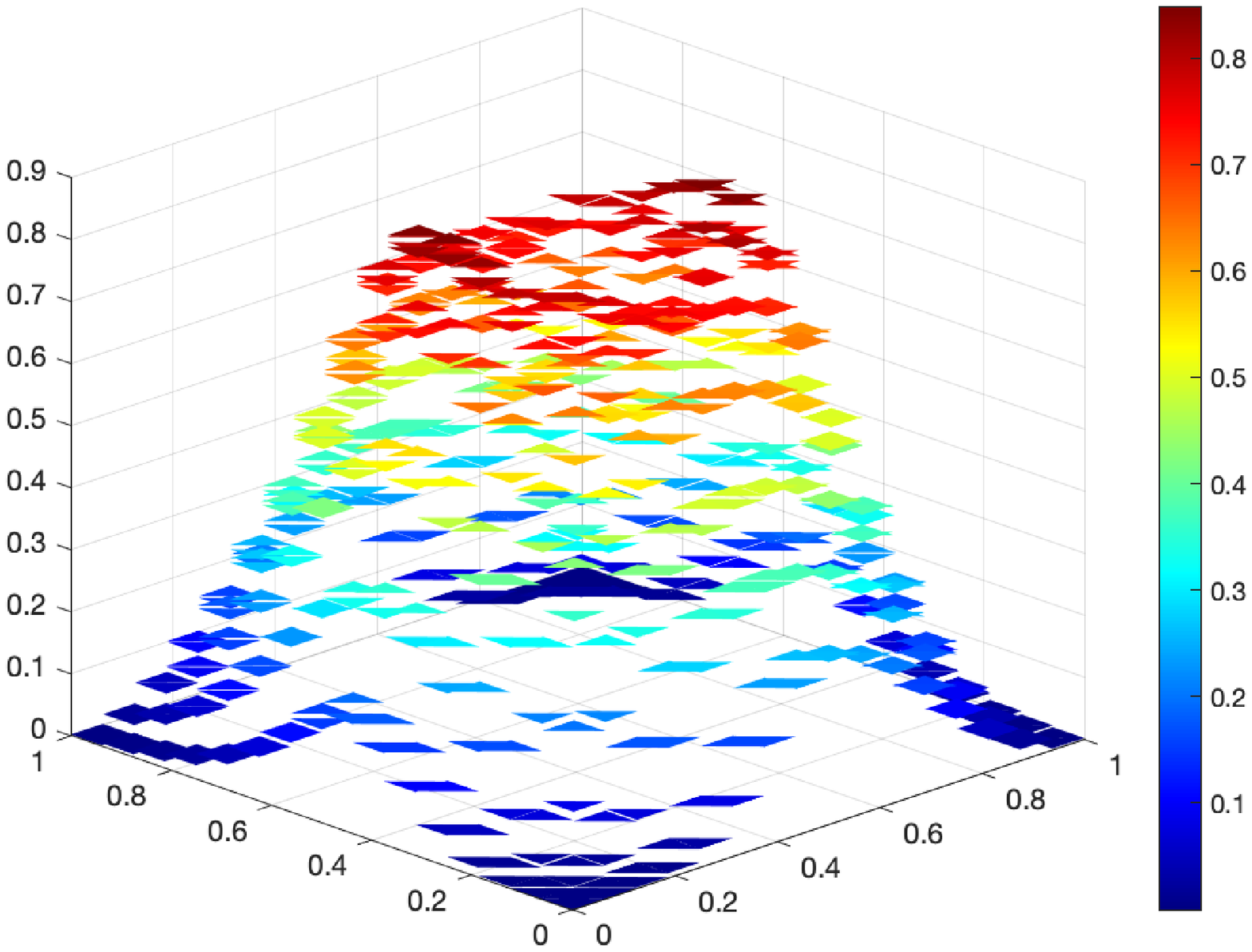}}
	{\includegraphics[width=.42\textwidth,height=.23\textheight]{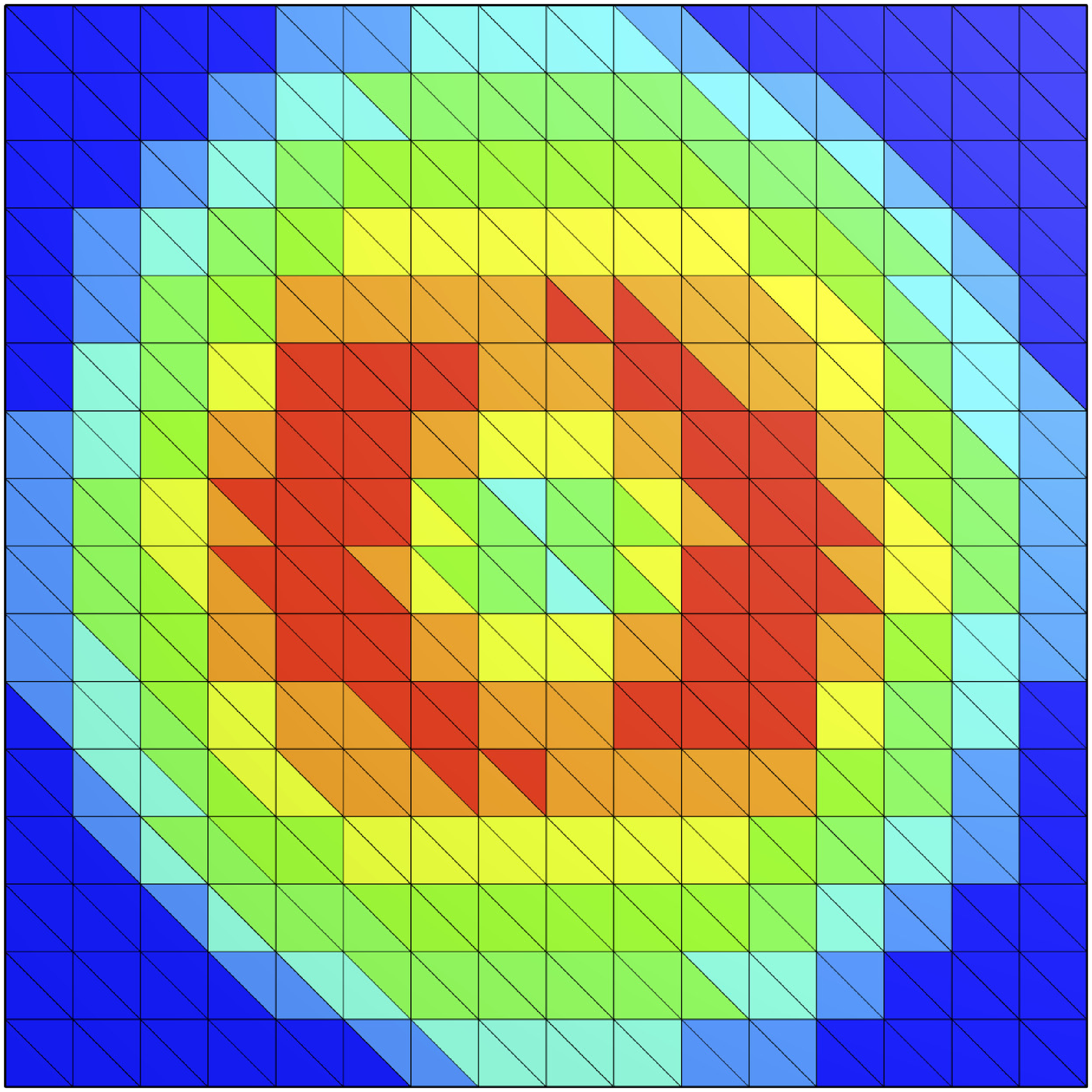}}
	{\includegraphics[width=.45\textwidth,height=.23\textheight]{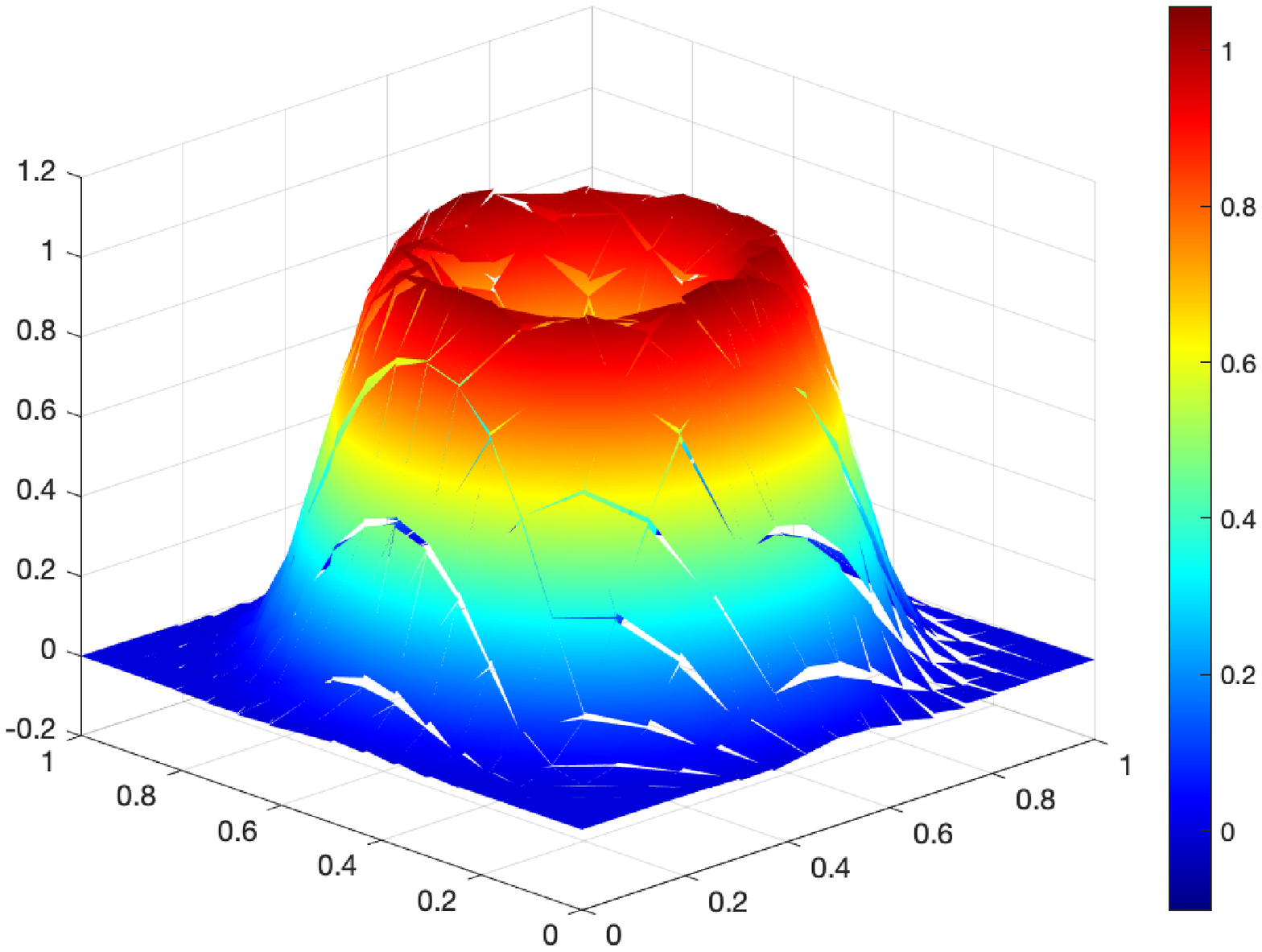}}
	{\includegraphics[width=.42\textwidth,height=.23\textheight]{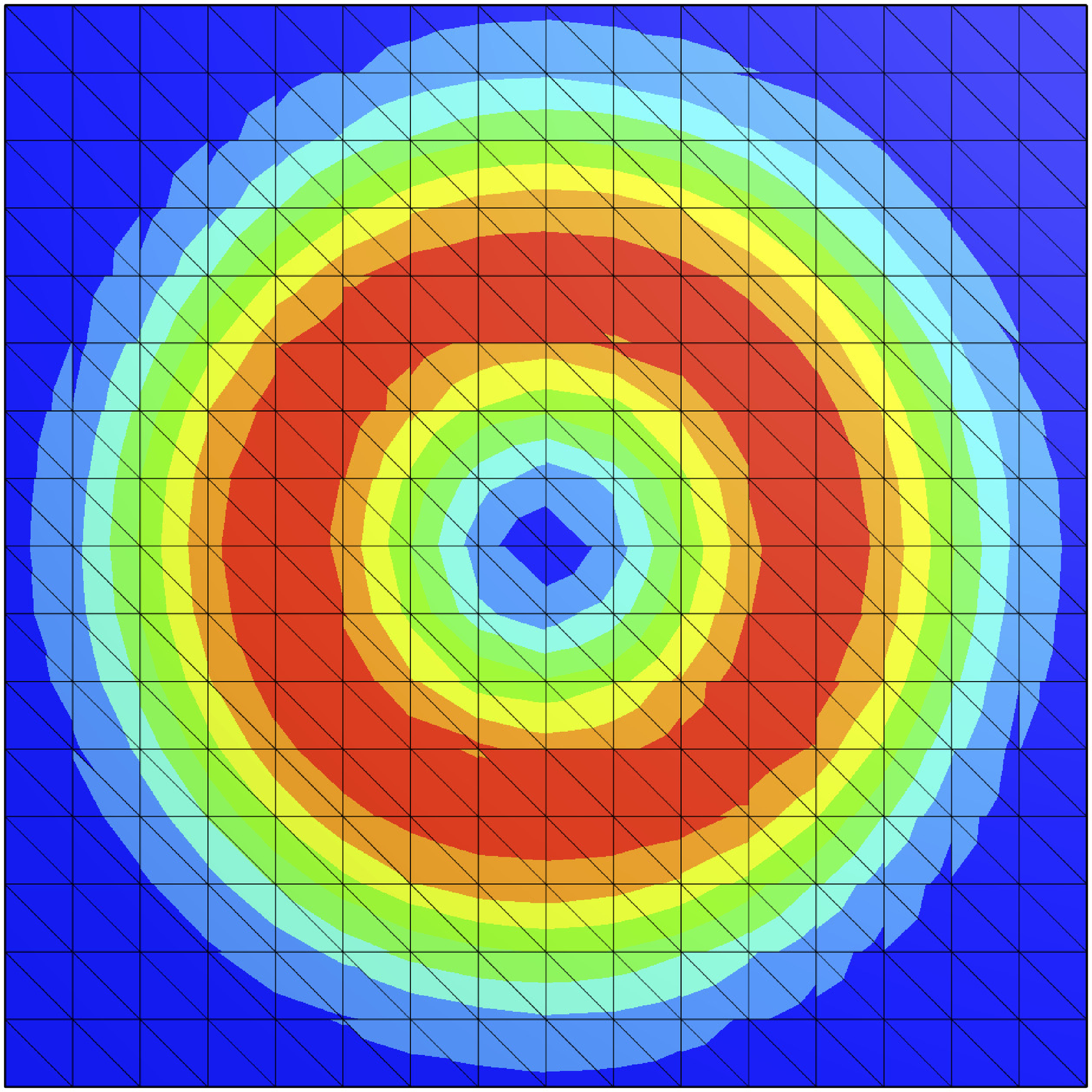}}
	{\includegraphics[width=.45\textwidth,height=.23\textheight]{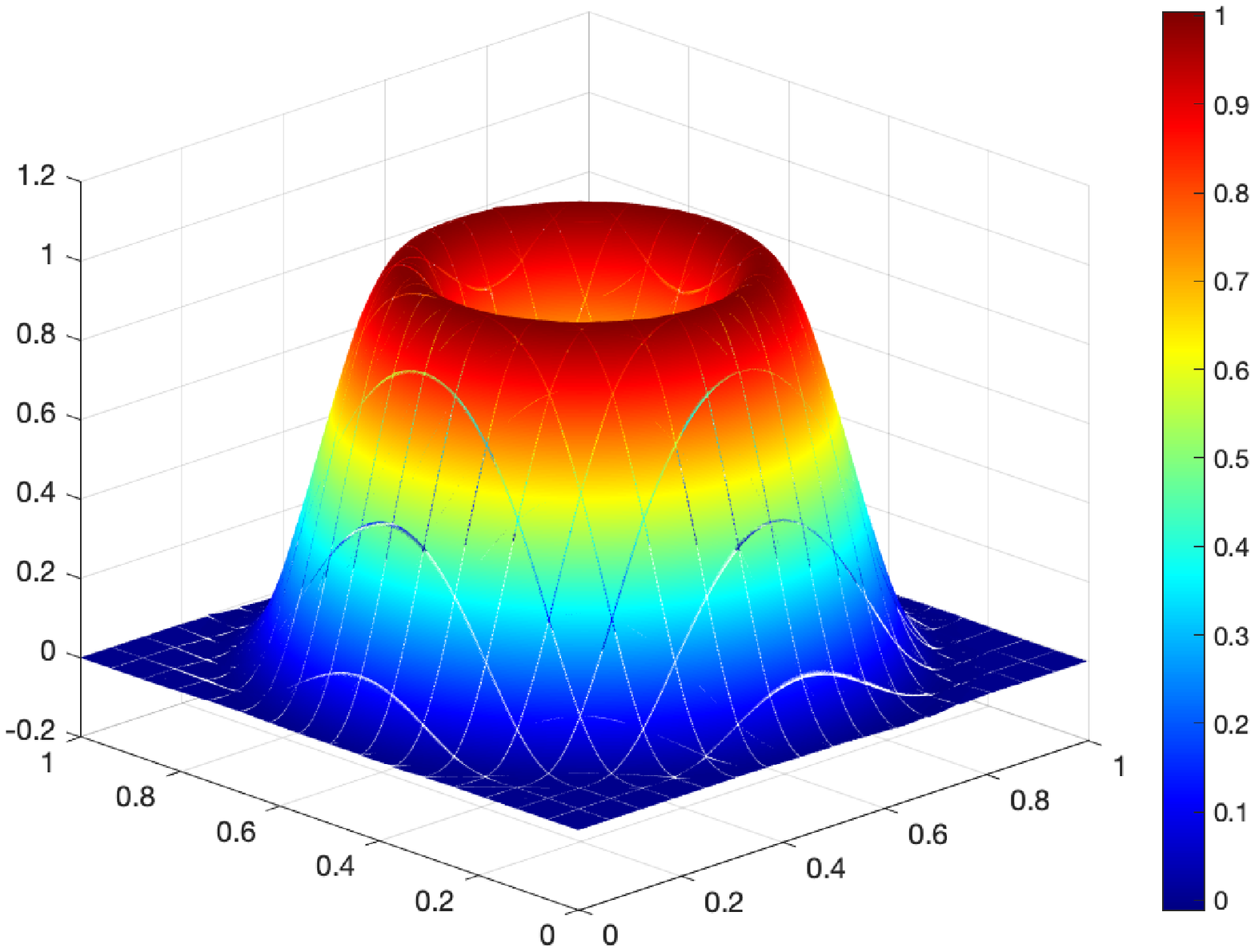}}
	{\includegraphics[width=.42\textwidth,height=.23\textheight]{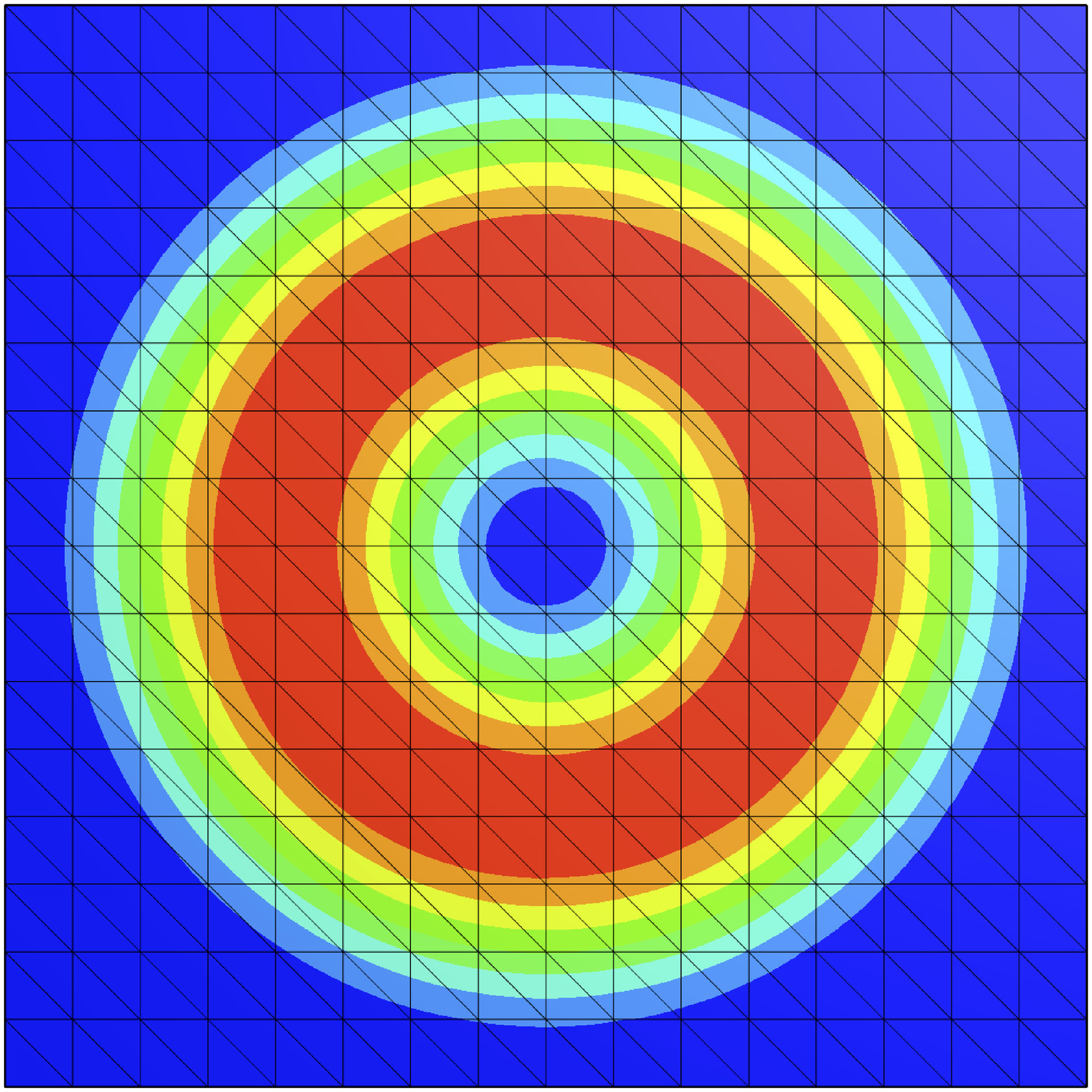}}
	\caption{Experiment III: A comparison of $\bm{\mathcal{P}}_0$-HDG (top), $\bm{\mathcal{P}}_1$-HDG (middle) and $\bm{\mathcal{P}}_2$-HDG (bottom) for a rotation flow.}
	\label{fig:rot}
\end{figure}

\subsection{Experiment IV: Interior layer problem}
The following example focuses on evaluating the performance of the HDG method when dealing with interior layers in 2D case. We consider the parameters $\gamma=0$, $\bm{\beta}=[1/2,\sqrt{3}/2]^T$, $\bm{f}=(0,0)^T$ on the domain $(0,1)^2$ with different diffusion coefficients $\varepsilon=10^{-3}$ and $10^{-9}$. The Dirichlet boundary conditions are imposed for the following function:
$$
\bm{u}=\left\{
\begin{aligned}
	&(1,1)^T, \text{ on } \{y=0,0\le x\le 1\},\\
	&(1,1)^T, \text{ on } \{x=0,0\le y\le 0.2\},\\
	&(0,0)^T,\text{ elsewhere. }
\end{aligned}
\right.
$$
In Figure \ref{fig:intlayer}, we plot the first component of the approximation solution $\bm{u}_h$. It is observed that the solution captures the presence of the interior layer but it also exhibits oscillations within the layer for all values of $\varepsilon$.
\begin{figure}[!htbp]
	\centering
	{\includegraphics[width=.45\textwidth]{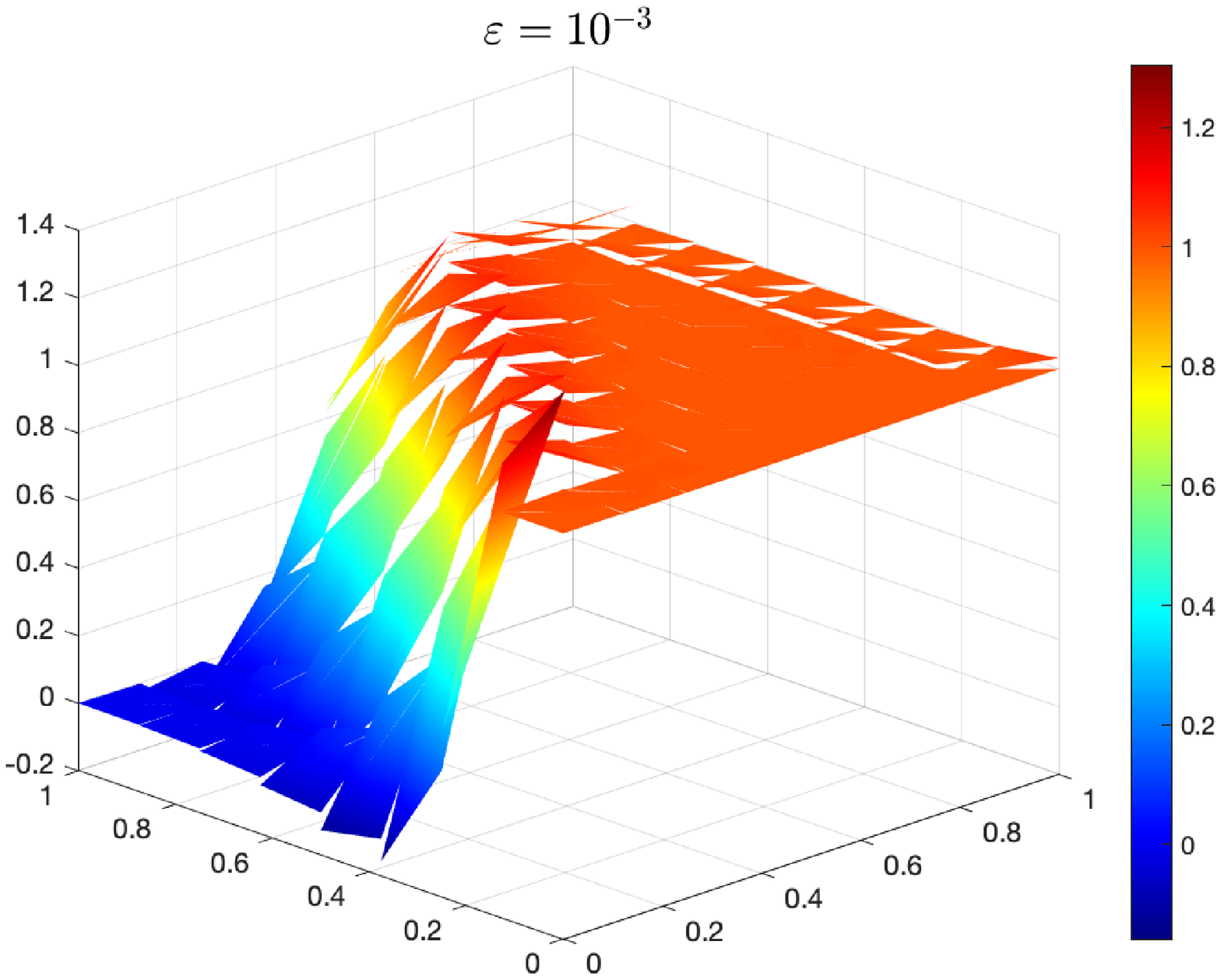}}
	{\includegraphics[width=.45\textwidth]{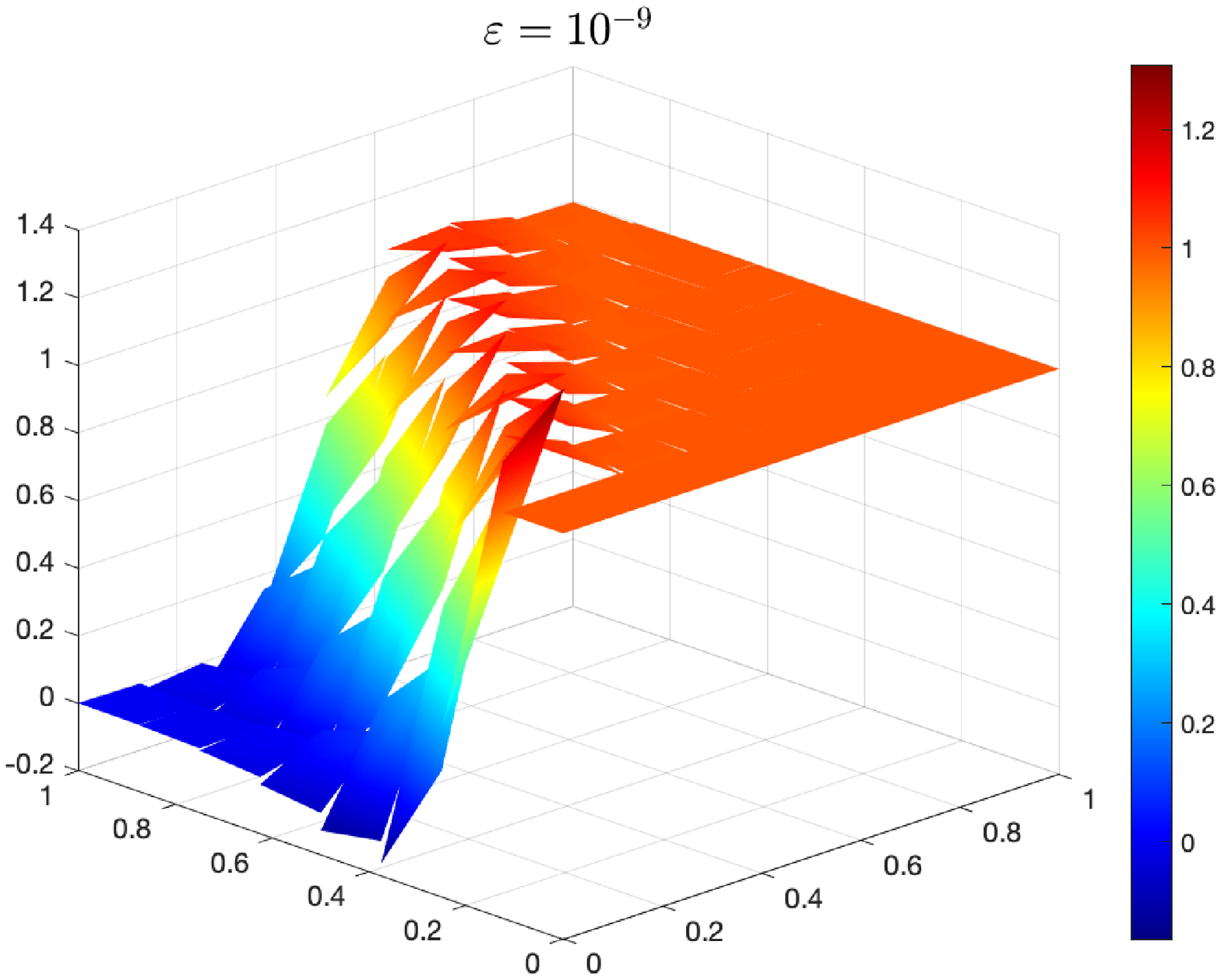}}
	\caption{Experiment IV: The first component of the HDG solution $\bm{u}_{h}$ for a interior layer problem with $\varepsilon=10^{-3}$ (left) and $\varepsilon=10^{-9}$ (right).}\label{fig:intlayer}
\end{figure}
\subsection{Experiment V: Boundary layer problem}
In this example, we utilize the HDG method to solve a 2D boundary layer problem. The problem is defined with the parameters $\gamma=0$, $\bm{\beta}=(1,2)^T$, and the domain $\Omega=(0,1)^2$ and we vary the diffusion coefficient $\varepsilon$ as $\varepsilon=10^{-3}$ and $10^{-9}$. The forcing term $\bm{f}$ is set as $(1,1)^T$, and homogeneous Dirichlet boundary conditions are imposed.

As shown in Figure \ref{fig:bdylayer}, the numerical solutions exhibit stability without spurious oscillations as $\varepsilon$ approaches zero.  Since the boundary conditions are imposed in a weak sense, the boundary layer is not resolved by the HDG approximations in this coarse mesh, which is also observed in the scalar case \cite{fu2015analysis}.
\begin{figure}[!htbp]
	\centering
	{\includegraphics[width=.45\textwidth]{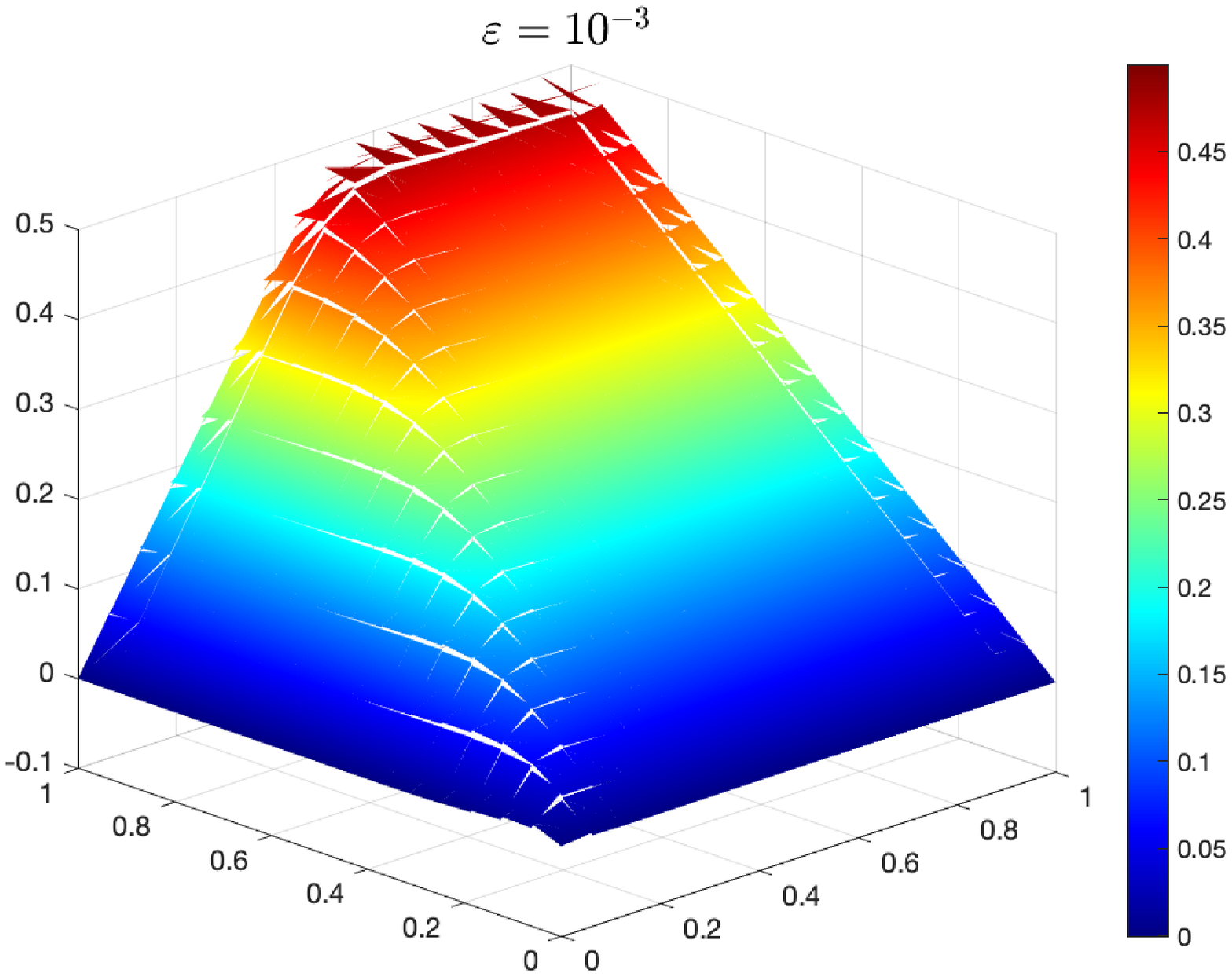}}
	{\includegraphics[width=.45\textwidth]{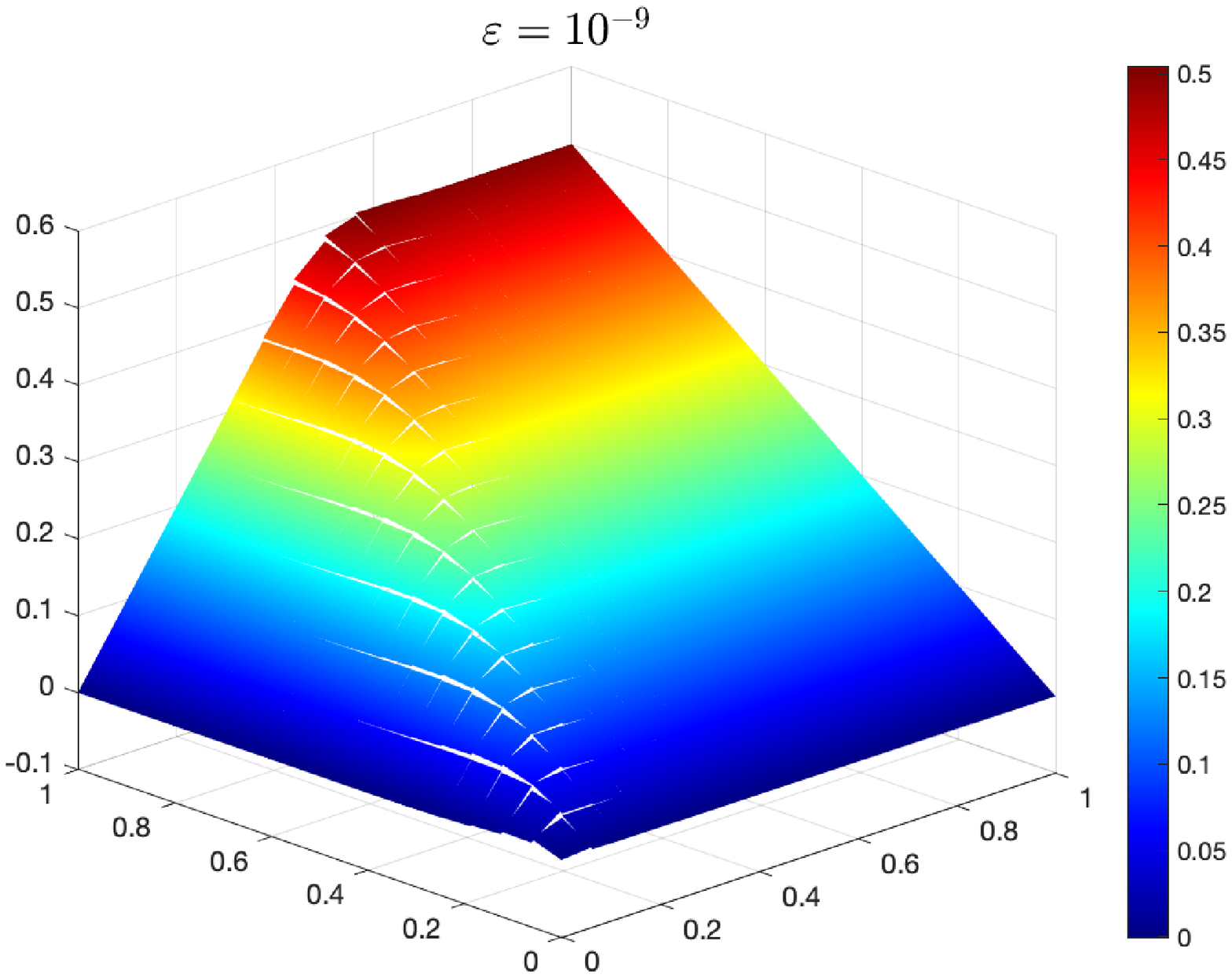}}
	\caption{Experiment V: The first component of the HDG solution $\bm{u}_{h}$ for a boundary layer problem with $\varepsilon=10^{-3}$ (left) and  $\varepsilon=10^{-9}$ (right).}\label{fig:bdylayer}
\end{figure}

\bibliographystyle{spmpsci}      
\bibliography{curl_HDG} 

\end{document}